\theoremstyle{plain}
\newtheorem{prop}{Proposition}
\newtheorem{thm}[prop]{Theorem}
\newtheorem{cor}[prop]{Corollary}
\newtheorem{lem}[prop]{Lemma}
\newtheorem{fact}[prop]{Fact}
\newtheorem{ques}{Question}
\newtheorem*{thmA}{Theorem A}
\newtheorem*{thmB}{Theorem B}
\newtheorem*{thmC}{Theorem C}
\newtheorem*{thmD}{Theorem D}
\newtheorem*{quesC}{Question C} 
\newtheorem*{thmE}{Theorem E}
\newtheorem*{ques*}{Question}
\newtheorem*{cor*}{Corollario}
\theoremstyle{definition}
\newtheorem{defi}{Definition}
\theoremstyle{remark}
\newtheorem{rem}[prop]{Remark}
\newtheorem{example}{Example}
\numberwithin{prop}{section}
\numberwithin{example}{section} 
\numberwithin{exer}{subsection}
\numberwithin{claim}{prop}
\numberwithin{step}{prop}
\numberwithin{equation}{section}
\numberwithin{defi}{section}
\numberwithin{example}{section}
\newcommand{\N}{\mathbb{N}}
\newcommand{\Z}{\mathbb{Z}}
\newcommand{\Q}{\mathbb{Q}}
\newcommand{\R}{\mathbb{R}}
\newcommand{\SL}{\mathrm{SL}}
\newcommand{\vcd}{\mathrm{vcd}}
\newcommand{\ccd}{\mathrm{cd}}
\newcommand{\Hc}{\mathrm{H}_c}
\newcommand{\QG}{\mathbb{Q}[G]}
\newcommand{\dH}{\mathrm{dH}}
\newcommand{\Aut}{\mathrm{Aut}}
\newcommand{\BiG}{\mathrm{\bf Bi}(G)}
\newcommand{\caO}{\mathcal{O}}
\newcommand{\flatrk}{\mathrm{flat}\text{-}\mathrm{rk}}
\newcommand{\algrk}{\mathrm{alg}\text{-}\mathrm{rk}}
\newcommand{\caS}{\mathcal{S}}
\newcommand{\caA}{\mathcal{A}}
\newcommand{\caM}{\mathcal{M}}
\newcommand{\euV}{\mathscr{V}}
\newcommand{\euE}{\mathscr{E}}
\newcommand{\caP}{\mathscr{P}}
\newcommand{\euT}{\mathscr{T}}
\newcommand{\ind}{\mathrm{ind}}
\newcommand{\eue}{\mathbf{e}}
\newcommand{\mx}{\mathbf{mx}}
\newcommand{\caC}{\mathcal{C}}
\newcommand{\frp}{\mathfrak{p}}
\newcommand{\DeltaDav}{\Delta_{\mathrm{Dav}}}
\newcommand{\caR}{\mathcal{R}}
\newcommand{\Res}{\mathrm{Res}}
\newcommand{\btd}{\text{\larger[3.3]{$\blacktriangledown$}}}
\newcommand{\btu}{\blacktriangle}
\newcommand{\trt}{\btd\!/\!\btu}
\newcommand{\caG}{\mathcal{G}}
\newcommand{\caT}{\mathscr{T}}
\newcommand{\caV}{\mathscr{V}}
\newcommand{\caE}{\mathscr{E}}
\newcommand{\Gl}{\mathrm{GL}}
\newcommand{\image}{\mathrm{im}}
\newcommand{\spn}{\mathrm{span}}
\newcommand{\euB}{\mathscr{B}}
\newcommand{\boG}{\mathbf{G}}
\newcommand{\ark}{\algrk}
\newcommand{\argu}{\hbox to 1.5ex{\hrulefill}}  
\newcommand{\bianca}[1]{\textcolor{blue}{#1}}
\begin{document}
\title[]{Some invariants of totally disconnected locally compact groups: cohomology and combinatorics}
\begin{abstract} The paper investigates two invariants for totally disconnected locally compact~groups: the number of ends and the rational discrete cohomological dimension.
For such a compactly generated group~$G$ it is shown that 
its number of ends can be expressed in terms of the rational discrete cohomology of~$G$. If $G$ is suitably acting on a building the number of ends and the rational cohomological dimension of~$G$ are related to those of the Weyl group associated to the building. In special cases, we are also able to compare the rational discrete cohomological dimension of~$G$ to the flat-rank of~$G$. 
Moreover, examples of groups for which these two invariants coincide are given.
Our approach leverages the combinatorics of Coxeter groups, yielding new results of independent interest in Coxeter theory. 
Finally, in the class of totally disconnected locally compact~groups acting properly and cocompactly on locally finite buildings, an accessibility result is proved: we explicitly construct a cocompact proper action on a tree if the rational discrete cohomological dimension is one. 
\end{abstract}
\author{Ilaria Castellano}
\address{Heinrich Heine Universit\"at D\"usseldorf,
Mathematisch-Naturwissenschaftliche Fakult\"at, Universit\"atsstra\ss{}e 1, 40225 D\"usseldorf, Germany}
\email{ilaria.castellano@hhu.de}
\author{Bianca Marchionna}
\address{Heidelberg University \\
Institut f\"ur Mathematik, Im Neuenheimer Feld 205, 69120 Heidelberg, Germany}
\email{bmarchionna@mathi.uni-heidelberg.de}
\author{Thomas Weigel}
\address{Dipartimento di Matematica e Applicazioni, Universit\`a degli Studi di
Milano-Bicocca, Via Roberto Cozzi no. 55, I-20125 Milano, Italy}
\email{thomas.weigel@unimib.it}

\date{\today}
\subjclass[2010]{22D05, 20J05, 20J06}
\keywords{Coxeter groups, buildings, t.d.l.c.~groups, invariants, ends, rational cohomological dimension, algebraic rank, flat rank}
\maketitle

\section{Introduction}\label{s:intro}
There are several invariants and properties of groups that admit a cohomological interpretation or can be related to suitable cohomological invariants. For instance, 
E.~Specker~\cite{specker} proved that the number of ends~$e(G)$ of a finitely generated group~$G$ can be expressed in terms of the rank of the cohomology groups of $G$ by
\begin{equation}\label{eq:specker}
    e(G)=1-\mathrm{rk}_{\Z}\mathrm{H}^0(G,\Z[G])+\mathrm{rk}_{\Z}\mathrm{H}^1(G,\Z[G]).
\end{equation}

It seems conceivable that analogous phenomena occur in the context of totally disconnected locally compact (= t.d.l.c.) groups, as already proved in several instances~(cf.~\cite{bhq,cast:cd1, cmw:stasw,lm,sauer:betti}). The purpose of this paper is to establish new results that fit in this general framework. 

The cohomology theory we use for t.d.l.c.~groups is the one developed for discrete modules over the rational group algebra $\QG$~\cite{cw:qrat}. Within this theory, to every t.d.l.c.~group~$G$ one associates the \emph{rational discrete cohomology functors} $\dH^k(G,\argu)$, $k\in \Z_{\geq 0}$,  and the \emph{rational discrete cohomological dimension}
$\ccd_\Q(G):=\sup\big\{k\in \Z_{\geq 0}\mid \dH^k(G,\argu)\neq 0\big\}$
(cf.~\cite[Equation~(3.11)]{cw:qrat}), which will be largely used in this work.
Note that $\ccd_\Q(G)$ coincides with the classical rational cohomological dimension in the case that $G$ is discrete. 

Our main results concern $\ccd_\Q(G)$, but also other two invariants, which do not explicitly arise from a cohomological framework. These are the number of ends~$e(G)$ and the flat rank~$\flatrk(G)$ of~$G$.    
Firstly, we generalise Specker's formula~\eqref{eq:specker} to compactly generated t.d.l.c.~groups as follows.
\begin{thmA}[\protect{cf.~Theorem~\ref{thm:end2}}]\label{thmJ}
    Let $G$ be a compactly generated t.d.l.c.~group. Then 
    \begin{equation*}
        e(G)=1-\dim_\Q\dH^0(G,\BiG)+\dim_\Q\dH^1(G,\BiG).
    \end{equation*}
   where $\BiG$ is the rational discrete standard $\Q[G]$-bimodule\footnote{Note that $\BiG$ is non-canonically isomorphic to the space of continuous locally constant functions from~$G$ to~$\Q$ with compact support \cite[\S 4.7]{cw:qrat}.}.
    \end{thmA}
    The core idea of the proof is drawn from Specker’s original argument; however, adapting it to the t.d.l.c.~context necessitates further nontrivial technical effort.
    
If $G$ acts properly and cocompactly on a locally finite building of type $(W,S)$, we exploit Theorem~\hyperref[thmA]{A} to provide a cohomological argument to prove that~$G$ has one end if, and only if, $W$ has one end (cf.~\cite[pp.~23-24]{cmr:KMgrps} and~Proposition~\ref{prop:ecd}). The strength of this criterion lies in enabling the application of Davis' combinatorial characterisation of one-ended Coxeter groups (which we recall in~Theorem~\ref{thm:dav} and provide an alternative proof). The reader will realise that typical examples for which all our results apply are t.d.l.c.~groups acting properly and cocompactly (or even Weyl-transitively) on locally finite buildings. E.g., semisimple algebraic groups defined over a non-Archimedean local field \cite{brutit1,brutit2}, complete Kac--Moody groups defined over a finite field \cite{rr06}, automorphism groups of locally finite right-angled buildings and certain universal subgroups \cite{cap:simple, dmss:uniRA}. The advantage of studying t.d.l.c.~groups acting on buildings is that one can often reduce the problem to Coxeter groups. Along the way, we prove new results on the rational cohomological dimension that are of independent interest in Coxeter theory (cf.~Section~\ref{S:cd_coxeter}).

\smallskip
Throughout the paper, we regard buildings as {\em chamber systems}, i.e., a building $\Delta=(\caC,\delta)$ consists of a non-empty set $\caC$ and a function $\delta\colon \caC\times \caC\to W$, where $(W,S)$ is a Coxeter group, subject to suitable axioms. The elements of~$\caC$ are called \emph{chambers}, and $(W,S)$ is said to be the \emph{type} of~$\Delta$. We will make extensive use of the Davis' realisation $|\DeltaDav|$ of a building~$\Delta$ (cf.~Section~\ref{sss:dav}). It is a topological space whose compactly supported cohomology $\mathrm{H}_c^\bullet(|\DeltaDav|,\Z)$ has been explicitly computed by M.~W.~Davis, J.~Dymara, T.~Januszkiewicz, J.~Meier and B.~Okun in~\cite{compsup}. These explicit formulae fit perfectly for our need in order to show the following: every t.d.l.c.~group acting properly and cocompactly on a locally finite building of type~$(W,S)$ satisfies
\begin{equation}\label{eq:cd}
    \ccd_\Q(G)=\ccd_\Q(W)
    \end{equation}
(cf.~Lemma~\ref{lem:coho} and Theorem~\ref{thm:ihbuil}). Equation~\eqref{eq:cd} provides an efficient way for computing $\ccd_\Q(G)$ by taking advantage of the combinatorics of Coxeter groups. For instance, one may use the M.~Bestvina's complex to compute the rational cohomological dimension of a Coxeter group (cf.~\cite[Remarks~(1)]{bes:vcd}), and so $\ccd_\Q(G)$. Indeed, we apply this method to Coxeter groups of hyperbolic type (cf.~Section~\ref{S:cd_coxeter} and Theorem~\hyperref[thmD]{D} below).

The equality~\eqref{eq:cd} is not an isolated phenomenon. D.~Degrijse and C.~Mart\'inez-Pérez~\cite{degmp} and, more recently, N.~Petrosyan and T.~Prytu{\l}a~\cite{pepr:bredon} have established an equality analogous to~\eqref{eq:cd} in Bredon group cohomology, even in a more general context than groups acting on buildings.
In the literature there are several results that relate invariants of a t.d.l.c.~group $G$
acting properly and cocompactly on a building of type~$(W,S)$ to invariants of the Coxeter group~$(W,S)$.  
The most relevant example for our purposes is the inequality $\flatrk(G)\leq \algrk(W)$ \cite{brw:build, caphag}, which relates the flat rank of~$G$ to the algebraic rank of $W$ for sufficiently transitive actions on~$\Delta$.  
Since $\algrk(W)\leq\ccd_\Q(W)$, the equality~\eqref{eq:cd} implies the following:
\begin{thmB}[\protect{cf.~Theorem~\ref{thm:fl-cd}}]\label{thmB}
   Let $\Delta$ be a locally finite building and $G$ a Weyl-transitive closed subgroup of the group of type-preserving automorphisms of $\Delta$. Then
       $ \flatrk(G)\leq\ccd_\Q(G).$
\end{thmB}
If $\Delta$ is thick, we provide a proof of Theorem~\hyperref[thmB]{B} that does not rely on the results in~\cite{brw:build, caphag}. Namely, we show that every t.d.l.c.~group acting properly and Weyl-transitively on a locally finite thick building  is $N$-compact (cf.~Proposition~\ref{prop:bcg}). Since the inequality $\flatrk(G)\leq \ccd_\Q(G)$  holds for all  $N$-compact t.d.l.c.~groups $G$ (cf.~\cite[Proposition~3.10(b)]{cw:qrat}), Theorem~\hyperref[thmB]{B} then follows.
Theorem~\hyperref[thmB]{B} gives rise to the following  question.

\begin{quesC}\label{quesC}
    For which non-discrete t.d.l.c.~groups~$G$ does   $\flatrk(G)=\ccd_\Q(G)$ hold?
\end{quesC}
If $G$ is compact then $\flatrk(G)=0=\ccd_\Q(G)$ (cf.~\cite[Proposition~3.7(a)]{cw:qrat}). 
For semisimple algebraic groups over a non-Archimedean local field and complete Kac--Moody groups over a finite field the equality $\flatrk(G)= \algrk(W)$ holds (cf.~\cite[Theorem~17]{brw:build} and~\cite[Corollary~C]{caphag}). Hence the answer to the question above for the latter classes of groups is provided by a comparison of the invariants $\algrk(W)$ and $\ccd_\Q(W)$. In~\cite[\S 6.11]{cw:qrat} it has been already pointed out that
$$\flatrk(G)=\algrk(W)=\ccd_\Q(W)=\ccd_\Q(G)$$
whenever $G$ is semisimple algebraic over a {non-Archimedean} local field.  In this case the Weyl group $(W,S)$ is of affine type and so $\algrk(W)=\ccd_\Q(W)$.
Within the class of Coxeter groups of hyperbolic type we prove the following: 
\begin{thmD}[\protect{cf.~Propositions~\ref{prop:vcdhyp} and~\ref{prop:noncomp}}]\label{thmD}
     For a Coxeter group $(W,S)$ of hyperbolic type the following hold:
     \begin{enumerate}
         \item[(a)] If $(W,S)$ is compact, then 
 $\ark(W)=1$ and $\ccd_\Q(W)=|S|-1\geq 2$.
 \item[(b)] If $(W,S)$ is non-compact, then $\ark(W)=\ccd_\Q(W)=|S|-2.$
     \end{enumerate}
\end{thmD}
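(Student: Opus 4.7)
The plan is to realise a hyperbolic Coxeter group $(W,S)$ as a discrete reflection group on $\mathbb{H}^n$ with $n=|S|-1$ and fundamental simplex $P\subset\mathbb{H}^n$: the compact case (Lann\'er type) corresponds to $P$ being compact in $\mathbb{H}^n$, while the non-compact case (Koszul type) corresponds to $P$ having finite volume but at least one ideal vertex on $\partial\mathbb{H}^n$. In both cases, Selberg's lemma applied to the faithful representation $W\hookrightarrow \mathrm{O}(n,1)$ furnishes a torsion-free finite-index subgroup $W'\leq W$, and the standard identification $\ccd_\Q(W)=\vcd(W)=\vcd(W')$ for virtually torsion-free groups transfers the cohomological computation to the topology of $M:=\mathbb{H}^n/W'$.

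For part (a), $M$ is a closed aspherical hyperbolic $n$-manifold, so $\vcd(W')=n=|S|-1$ and hence $\ccd_\Q(W)=|S|-1$. Since $W$ acts geometrically on the CAT$(-1)$ space $\mathbb{H}^n$, it is Gromov hyperbolic and contains no copy of $\Z^2$, whence $\ark(W)=1$. The bound $|S|-1\geq 2$ reflects the fact that $\mathbb{H}^1$ admits no compact hyperbolic Coxeter simplex (a bounded interval yields only finite dihedral groups, which are spherical rather than hyperbolic).

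For part (b), $M$ is a finite-volume cusped hyperbolic $n$-manifold. By equivariantly removing sufficiently small horoball neighbourhoods of the $W$-orbits of ideal vertices, one exhibits a $W$-equivariant deformation retraction of $\mathbb{H}^n$ onto an $(n-1)$-dimensional polyhedral $W$-subcomplex; equivalently, $M$ is homotopy equivalent to a compact $n$-manifold with non-empty boundary and hence to an $(n-1)$-complex. This yields $\ccd_\Q(W)=\vcd(W')\leq|S|-2$. For the matching lower bound, each ideal vertex of $P$ corresponds to a maximal standard parabolic subgroup $W_T$ with $|T|=|S|-1$ of Euclidean type; such a $W_T$ acts cocompactly on a horosphere $\cong\R^{|S|-2}$ and contains $\Z^{|S|-2}$ as a finite-index translation lattice. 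This provides $\ark(W)\geq|S|-2$ and $\ccd_\Q(W)\geq\vcd(\Z^{|S|-2})=|S|-2$. Combined with the general inequality $\ark(W)\leq\ccd_\Q(W)$, the four quantities coincide.

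The main technical point is the equivariant spine construction in (b): although classical for cusped hyperbolic manifolds, some care is needed to make the horoball removal simultaneously $W$-equivariant at every cusp orbit. A conceptual alternative, entirely internal to the Coxeter/Davis framework, is to bypass this geometric step by computing $\Hc^*(|\DeltaDav|,\Q)$ directly via the formulas of \cite{compsup} applied to the nerve of $(W,S)$: in the Koszul case the maximal proper subsets of $S$ that fail to be spherical are Euclidean of size $|S|-1$, which forces the top degree of non-vanishing compactly supported rational cohomology to be exactly $|S|-2$.
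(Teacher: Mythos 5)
Your argument is essentially correct but follows a genuinely different route from the paper. The paper stays inside the combinatorial Coxeter/Bestvina framework: for the compact case it gets $\algrk(W)=1$ from Moussong's criterion and $\ccd_\Q(W)=\vcd(W)=|S|-1$ by observing that the poset of spherical subsets of a compact hyperbolic $(W,S)$ coincides with that of an affine Coxeter system of the same rank, so the Bestvina complexes have equal dimension; for the non-compact case it proves the upper bound $\vcd(W)\le |S|-2$ by splitting into the crystallographic case (Borel--Serre: $\vcd(W)=\dim X-\ell(\boG)$ with $\Q$-rank $\ge 1$ because the quotient is non-compact) and the non-crystallographic case (a finite case-by-case verification that there are exactly $|S|-1$ maximal spherical subsets, so Bestvina's complex has dimension $\le |S|-2$). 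You instead pass to a torsion-free finite-index subgroup via Selberg and read everything off the topology of the quotient hyperbolic manifold: closed aspherical in case (a), cusped with an $(|S|-2)$-dimensional spine in case (b). Your lower bounds coincide with the paper's (the corank-one affine special subgroup and its translation lattice). What your route buys is uniformity -- one argument covers crystallographic and non-crystallographic groups alike, and your spine construction is really the geometric content behind the paper's Borel--Serre step; what the paper's route buys is that it avoids Selberg's lemma and the thick--thin decomposition, stays purely combinatorial in the non-crystallographic case, and directly exhibits the Bestvina complex, which is reused elsewhere in the paper. Your closing suggestion to compute $\Hc^*(|\Sigma(W,S)_{\mathrm{Dav}}|,\Q)$ via the formulas of Davis--Dymara--Januszkiewicz--Meier--Okun is a plausible third route but is only sketched.

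One imprecision should be repaired: the phrase \emph{the standard identification $\ccd_\Q(W)=\vcd(W)$ for virtually torsion-free groups} is not a valid general principle -- the paper itself recalls (after Question~\ref{q:algrkcdQ}, citing Bestvina) that there are Coxeter groups with $\ccd_\Q(W)\neq\vcd(W)$. Your argument does not actually need this identification: in case (a) the lower bound $\ccd_\Q(W)\ge |S|-1$ follows directly from $\mathrm{H}^{n}(W',\Q[W'])\cong \Hc^{n}(\mathbb{H}^{n},\Q)\cong\Q\neq 0$ for the closed aspherical quotient, and in case (b) the chain $|S|-2\le\algrk(W)\le\ccd_\Q(W)\le\vcd(W)\le |S|-2$ closes the sandwich without ever invoking $\ccd_\Q=\vcd$ as a black box. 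State the bounds this way and the proof is complete.
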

Combining Theorems~\hyperref[thmB]{B} and~\hyperref[thmD]{D}, we answer Question~\hyperref[quesC]{C} for every complete Kac--Moody group~$G$ over a finite field of hyperbolic type $(W,S)$. Namely, one has $\flatrk(G)=\ccd_\Q(G)$ if and only if $(W,S)$ is non-compact.

\smallskip

The last section of the paper focuses on t.d.l.c.~groups of cohomological dimension one and accessibility. Despite the discrete case, it is still unknown whether, for an arbitrary t.d.l.c.~group $G$, $\ccd_\Q(G)=1$ implies the existence of an action of $G$ on a tree with compact open stabilisers~\cite{cmw:stasw}. If the t.d.l.c.~group $G$ admits a suitable action on a locally finite building, this problem has an affirmative answer:
\begin{thmE}[\protect{cf.~Theorem~\ref{thm:cd1}}]
    Let $G$ be a t.d.l.c.~group acting chamber-transitively  on a locally finite building $\Delta$ with compact open stabilisers. If $\ccd_\Q(G)=1$ then $G$ decomposes as fundamental group of a finite tree of profinite groups. Moreover, $G$ is unimodular.
\end{thmE}
By~\eqref{eq:cd}, if $\ccd_\Q(G)=1$ then the Coxeter group $(W,S)$ is infinite and virtually free. This simple observation allows us to construct a proper continuous action on a tree out of the given action on the building. One often spots tree-like structures in buildings and polyhedral complexes~\cite{basw, dyja}. Our method is based on a construction due to F.~Haglund and F.~Paulin~\cite{hp:arb}, and extend it from free products with amalgamation to visual graph of groups decompositions of $(W,S)$ (cf.~Definition~\ref{def:indgraph}(a)). 

\subsection*{Acknowledgements} 
The authors would like to thank P.-E.~Caprace for useful comments that improved an earlier draft.
The first and second named authors were partially supported by the Deutsche Forschungsgemeinschaft (DFG, German Research Foundation) – Project-ID 491392403 – TRR 358. The second authour was also partially supported by RTG 2229 "Asymptotic Invariants and Limits of Groups and Spaces".
The third author gratefully acknowledges financial support by the PRIN2022 “Group theory and its applications”.
All authors are members of the Gruppo Nazionale per le Strutture Algebriche,
Geometriche e le loro Applicazioni (GNSAGA), which is part of the Istituto
Nazionale di Alta Matematica (INdAM).

\section{The number of ends of a compactly generated t.d.l.c.~group}
\subsection{Graphs}
\label{ss:naiGr}
An \emph{undirected graph} $\Gamma=(V,E)$ consists of a 
set of
vertices $V=V(\Gamma)$ together with a set of edges
$E=E(\Gamma)\subseteq\caP_2(V)$.
A {\em subgraph $(U,F)$} of $\Gamma$ is an undirected graph such that $U\subseteq V$ and $F\subseteq E$. An undirected graph $(V,E)$ is said to be 
\emph{complete} if $E=\caP_2(V)$. A complete
subgraph $(U,F)$ of an undirected graph $(V,E)$ satisfying
$|U|=n$ is said to be an \emph{$n$-clique}. 

A subgraph $(U,F)$ of an undirected graph
$\Gamma=(V,E)$ is said to be \emph{induced} if
$F=\caP_2(U)\cap E$. For instance, any $n$-clique is an induced subgraph. 
Given $U\subseteq V$, denote by~$\ind_\Gamma(U)$ be the induced subgraph of~$\Gamma$ with vertex-set $U$. 
Given a subgraph $\Lambda$ of $\Gamma$, one denotes by~$\Gamma-\Lambda$ the subgraph 
$$\big(V(\Gamma)\setminus V(\Lambda), \{\{x,y\}\in E\mid x,y\not\in V(\Lambda)\}\big).$$
If $\Lambda$ consists of a $1$-vertex graph with vertex-set $\{v\}$, we write $\Gamma-v$ instead of~$\Gamma-\Lambda$.

A \emph{path} in~$\Gamma$ is a finite sequence of vertices $(v_0, \ldots, v_n)$, $n\geq 0$, such that $\{v_i,v_{i+1}\}\in E$ for every $0\leq i\leq n-1$. Every path $\frp=(v_0,\ldots, v_n)$ determines a subgraph~$(V(\frp),E(\frp))$ of~$\Gamma$ by setting $V(\frp)=\{v_0,\ldots, v_n\}$ and $E(\frp)=\big\{\{v_{i},v_{i+1}\}\mid 0\leq i\leq n-1\big\}$.
The path $(v_0,\ldots, v_n)$ is \emph{simple} if either $n=0$ or $v_{i+2}\neq v_i$ for all $1\leq i\leq n-2$. A \emph{cycle} if a simple path $(v_0,\ldots, v_n)$ with $v_0=v_n$ and $v_{n-1}\neq v_1$.

An undirected graph $\Gamma=(V,E)$ is said to be \emph{chordal}
if the only induced cycles in $\Gamma$ have $3$ vertices.

\smallskip
Every undirected graph
$\Gamma=(V,E)$ defines a combinatorial graph $\Vec{\Gamma}=(V,\Vec{E})$ in the sense
of J-P.~Serre (cf.~\cite[\S I.2.1, Definitions~1~and~5]{ser:trees}), where
\begin{equation}
\label{eq:SerGr}
\Vec{E}=\{(v,w),(w,v)\mid\{v,w\}\in E\},
\end{equation}
the origin map $o\colon\Vec{E}\to V$ is the projection on the first coordinate, the terminus map $t\colon\Vec{E}\to V$
is the projection on the second coordinate,
and the edge-inversion map $\bar{\cdot}\colon\Vec{E}\to\Vec{E}$
interchanges the first and second coordinates.
On the other hand, every combinatorial graph
$\Gamma=(\caV,\caE)$ in the sense of J-P.~Serre defines
an undirected graph $\ddot{\Gamma}=(\caV,\ddot{\caE})$ given by
\begin{equation}
\label{eq:NGr}
\ddot{\caE}=\big\{\{o(\eue),t(\eue)\}\mid \eue\in\caE\big\}.
\end{equation}

\subsection{Cayley--Abels graphs}
Given a t.d.l.c.~group $G$, a {\em Cayley--Abels graph of $G$} is defined to be a connected locally finite graph $\Gamma$ on which $G$ acts vertex-transitively with compact open stabilisers. It is well-known that $G$ admits a Cayley--Abels graph if, and only if, $G$ is compactly generated. Moreover, all Cayley--Abels graphs of a compactly generated t.d.l.c.~group $G$ are quasi-isometric to each other (cf.~\cite{c:int,km:cayley,w:int} for instance).

From now on, $G$ will denote a compactly generated t.d.l.c.~group. 

\begin{example}\label{ex:cayab}
        Given a building $\Delta=(\caC,\delta)$ of type $(W,S)$, the \emph{chamber graph $(\caC, E(\caC))$ of $\Delta$} (cf.~\cite[Definition~4.1]{kramer}) is the undirected graph whose set of vertices is $\caC$, and $\{c,d\}\in E(\caC)$ if, and only if, $\delta(c,d)\in S$. Assume that $\Delta$ is locally finite and $G$ acts transitively on $\Delta$ with compact open stabilisers. Then $(\caC, E(\caC))$ is a Cayley--Abels graph of $G$.
\end{example}

Following \cite{cw:qrat}, a {\em compact generating system} of $G$ is a pair  $(K,S)$ such that
\begin{itemize}
    \item $K$ is a compact open subgroup of $G$;
    \item $S$ is a finite subset of $G$ satisfying $S=S^{-1}$ and $S\cap K=\emptyset$;
    \item $G$ is algebraically generated by $K\cup S$.
\end{itemize} 

Given a compact generating system $(K,S)$ of $G$, the {\em associated Cayley--Abels graph $\Gamma (G,K,S)$} is the connected graph in the sense of J-P.~Serre consisting of the following data:
$$\euV(\Gamma (G,K,S)) = G/K\quad\text{and}\quad \euE(\Gamma (G,K,S)) = \{ (gK, gsK) \mid g \in G, s \in S \}.$$
The origin map is given by the projection on the first coordinate, the terminus map is given by the projection on the second coordinate, while the edge inversion map permutes the first and second coordinates.

\subsection{Almost $(G,K)$-invariant sets} Let $K$ be a compact open subgroup of $G$. Following \cite{cast:cd1,dun:acc}, a set $B \subseteq G/K$ is said to be
{\em almost $(G, K)$-invariant} if, for all $g \in G$, $gB =_a B$ (i.e., the symmetric difference is finite) and, for all $k \in K$, $k B = B$. Note that a finite set $B\subseteq G/K$ is almost $(G,K)$-invariant if, and only if, $B=B_1\sqcup\ldots\sqcup B_n$ where each $B_i$ denotes the set of all left $K$-cosets that are necessary to cover a single $K$-double-coset lying in the preimage of $B$ in $G$.
\begin{example}\label{ex:alm inv} Let $G$ be a compactly generated t.d.l.c.~group, let $(K,S)$ be a compact generating system and $\Gamma=\Gamma(G,K,S)$ the associated Cayley--Abels graph. Given a finite connected subgraph $\Gamma'$ of $\Gamma$, denote by $\Gamma-\Gamma'$ the subgraph spanned by all the vertices of $\Gamma$ that are not in $\Gamma'$. Every connected component $C$ of $\Gamma-\Gamma'$ determines the almost $(G,K)$-invariant set 
\begin{equation}
    B_C:=\{gK\in G/K\mid g^{-1}K\in \caV(C)\}.
\end{equation}
    Clearly, $kB_C=B_C$ for every $k\in K$. Therefore, since $G$ is algebraically generated by $K\cup S$, it suffices to prove that $sB_C=_a B_C$ for every $s\in S$. Given $s\in S$, assume that $gK\in B_C$ and $sgK\notin B_C$. It means that $g^{-1}s^{-1}K$ belongs to the boundary of $C$, i.e., $g^{-1}s^{-1}K$ is not a vertex of $C$ but there is an edge in $\Gamma$ connecting $g^{-1}s^{-1}K$ to a vertex  of $C$ (which in this case is $g^{-1}K$). As $\Gamma'$ is finite, the boundary of $C$ is finite. Since $S^{-1}=S$, one concludes that $sB_C=_a B_C$.
\end{example}

\subsection{Almost $(G,K)$-invariant functions}
Let $(K, S)$ be a compact generating system of $G$. In this section we recall the relation between the rational discrete first-degree cohomology of $G$ and the almost $(G,K)$-invariant sets. The $\Q$-vector space $\mathrm{Hom}_{\Q}(\Q[G/K], \Q)$ admits the left $G$-action
given by
\[g \cdot \alpha(x) = \alpha(g^{-1}
x),\quad \alpha\in\mathrm{Hom}_{\Q}(\Q[G/K], \Q),\quad g \in G,  x \in G/K.\]
One writes $\alpha =_a \beta$  if $\alpha(x) = \beta(x)$ for all but finitely many elements $x \in G/K$.
An element $\alpha \in \mathrm{Hom}_{\Q}(\Q[G/K], \Q)$ is said to be {\em almost $(G, K)$-invariant} if, for all $g \in G$,
$g \cdot \alpha =_a \alpha$  and, for all $k \in K$,  $k \cdot \alpha = \alpha$. For example, the characteristic function $\chi_B$ of an almost $(G,K)$-invariant set $B$ is an almost $(G,K)$-invariant function.  By~\cite[Proposition~3.15]{cast:cd1},
for every compact open subgroup $K$ of a t.d.l.c.~group $G$ one has
\begin{equation}\label{eq:alm inv}
\dH^1(G, \Q[G/K])\cong\frac{\mathrm{AInv}_K (G, \Q)}{C(G/K) + \Q[G/K]^K},
\end{equation}
where $\mathrm{AInv}_K (G, \Q)$ denotes the $\Q$-vector
space of all almost $(G, K)$-invariant functions, 
$C(G/K)$ denotes the set of all linear functions from $\Q[G/K]$ to $\Q$ which are constant on $G/K$, and $\Q[G/K]^K$ is the largest $K$-invariant submodule of $\Q[G/K]$. Here $\Q[G/K]^K$ is regarded as the space of all almost zero functions in $\mathrm{AInv}_K(G, \Q)$. For the proof of the following result it is worth recalling that the $\Q$-vector space  $\mathrm{AInv}_K (G, \Q)$ is generated by the set $\{\chi_B\mid B\subset G/K\ \text{almost $(G,K)$-invariant set}\}$ (cf.~\cite[Lemma~4.4]{cast:cd1}). 

\smallskip
A locally finite connected graph $\Gamma$ has at least $n$ ends if there is a finite subgraph $\Gamma'$ such that $\Gamma-\Gamma'$ has at least $n$ infinite connected components. The graph $\Gamma$ has $n$ ends if it has at least $n$ ends but not at least $n+1$ ends. Moreover, $\Gamma$ has infinitely many ends if, for all $n\in\N$, $\Gamma$ has at least $n$ ends. The {\em number of ends $e(G)$ of $G$} is defined to be the number of ends of a(ny) Cayley--Abels graph of $G$. Note that this number is well-defined since the Cayley--Abels graphs of $G$ are quasi-isometric to each other. It is well-known that $e(G)\in\{0,1,2,\infty\}$ and that $e(G)=0$ if, and only if, $G$ is compact.
\begin{thm}\label{thm:ends}
  Let $G$ be a non-compact compactly generated t.d.l.c.~group and let $K$ be a compact open subgroup. The $\Q$-vector space $\dH^1(G, \Q[G/K])$ has dimension $e(G)-1$ if $G$ has finitely many ends. Moreover, $G$ has infinitely many ends if and only if $\dH^1(G, \Q[G/K])$ is infinite-dimensional.
\end{thm}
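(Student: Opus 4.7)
The plan is to combine the explicit description of $\dH^1(G,\Q[G/K])$ given in~\eqref{eq:alm inv} with the geometric representation of almost $(G,K)$-invariant sets afforded by the Cayley--Abels graph (cf.~Example~\ref{ex:alm inv}). Fix a compact generating system $(K,S)$ of $G$ and put $\Gamma=\Gamma(G,K,S)$, so that $e(G)$ equals the number of ends of $\Gamma$. Thanks to~\eqref{eq:alm inv} it then suffices to compute the $\Q$-dimension of
$$V=\mathrm{AInv}_K(G,\Q)/(C(G/K)+\Q[G/K]^K).$$

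The first step is to show that $V$ is spanned by the classes $[\chi_{B_C}]$ where $\Gamma'$ ranges over finite connected subgraphs of $\Gamma$ and $C$ over the \emph{infinite} connected components of $\Gamma-\Gamma'$. Indeed, $\mathrm{AInv}_K(G,\Q)$ is generated by the characteristic functions $\chi_B$ of almost $(G,K)$-invariant sets $B\subseteq G/K$, and every such $B$ has finite boundary in $\Gamma$; hence one may choose a finite connected $\Gamma'$ such that $B$ differs from a union of connected components of $\Gamma-\Gamma'$ by only a finite set. The characteristic functions of the finite components and of the finite symmetric difference all lie in $\Q[G/K]^K$ and therefore vanish in $V$, leaving only the contributions from the infinite components.

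The second step is to compute, for a fixed finite connected $\Gamma'$ with infinite components $C_1,\dots,C_m$, the dimension of the subspace $V_{\Gamma'}\subseteq V$ generated by $[\chi_{B_{C_1}}],\dots,[\chi_{B_{C_m}}]$. Since $\Gamma$ is locally finite, $\Gamma-\Gamma'$ has only finitely many connected components, so the complement of $\bigsqcup_{i=1}^m B_{C_i}$ in $G/K$ is finite. Consequently $\sum_i\chi_{B_{C_i}}$ equals the constant function $1$ up to a finitely supported $K$-invariant correction, yielding the relation $\sum_i[\chi_{B_{C_i}}]=0$ in $V$. Conversely, if $\sum_i\lambda_i\chi_{B_{C_i}}=c\cdot 1+f$ with $c\in\Q$ and $f$ finitely supported and $K$-invariant, then evaluating on cofinitely many points of each infinite set $B_{C_i}$ forces $\lambda_i=c$ for every $i$. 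Thus the unique relation is the one above and $\dim_\Q V_{\Gamma'}=m-1$.

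The conclusion follows by exhausting $\Gamma$ via a nested sequence $\Gamma'_1\subseteq\Gamma'_2\subseteq\cdots$ of finite connected subgraphs: the subspaces $V_{\Gamma'_n}$ are nested (since each infinite component of $\Gamma-\Gamma'_n$ splits into finitely many components of $\Gamma-\Gamma'_{n+1}$, and the finite ones disappear in $V$), and $V=\bigcup_n V_{\Gamma'_n}$ by the first step. Writing $m_n$ for the number of infinite components of $\Gamma-\Gamma'_n$, one has $e(G)=\sup_n m_n$, from which $\dim_\Q V=e(G)-1$ when $e(G)<\infty$ and $\dim_\Q V=\infty$ otherwise. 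The most delicate point is the careful bookkeeping in the quotient by $C(G/K)+\Q[G/K]^K$: the shift by $-1$ reflects precisely that the sum of the classes attached to the infinite components can be absorbed into a non-zero constant modulo a finitely supported function.
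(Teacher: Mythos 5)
Your overall strategy is sound and close in substance to the paper's: both arguments rest on the identification \eqref{eq:alm inv}, on the fact that $\mathrm{AInv}_K(G,\Q)$ is generated by characteristic functions of almost $(G,K)$-invariant sets, and on relating such sets to the infinite components of $\Gamma-\Gamma'$. Your organisation is in fact tidier: you compute $\dim_\Q V$ exactly, as a nested union of the subspaces $V_{\Gamma'}$, whereas the paper proves two separate inequalities (at least $n$ ends forces dimension $\geq n-1$; dimension $\geq m$ forces at least $m+1$ ends). Your Steps 2 and 3 are correct as written, including the identification of $\sum_i[\chi_{B_{C_i}}]=0$ as the unique relation.

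The one genuine gap is in Step 1, in the sentence ``every such $B$ has finite boundary in $\Gamma$''. This is false as stated. Almost $(G,K)$-invariance is a condition on \emph{left} translates ($gB=_aB$), while the edges of $\Gamma(G,K,S)$ are given by \emph{right} multiplication by elements of $S$, so a left-almost-invariant subset of $G/K$ may have infinite coboundary in $\Gamma$. Already for $G$ a discrete free group on $a,b$ and $K=\{1\}$: the set of reduced words ending in $a^{-1}$ is almost $(G,K)$-invariant in the sense of the paper, yet every one of its elements has the neighbour obtained by right-appending $b$ outside the set. What does have finite boundary is the inverted set $C_B=\{gK\mid g^{-1}K\in B\}$ --- precisely the device the paper introduces in its proof, and the mirror image of the map $C\mapsto B_C$ of Example~\ref{ex:alm inv} that you use everywhere else. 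The repair is mechanical: show $\delta C_B$ is finite, choose a finite connected $\Gamma'$ containing $\delta C_B$, deduce that $C_B$ differs from a union of infinite components $\caV(C_1),\dots,\caV(C_r)$ of $\Gamma-\Gamma'$ by a finite ($K$-invariant) set, and then apply the inversion (using $B_{C_B}=B$) to conclude $[\chi_B]=\sum_j[\chi_{B_{C_j}}]$ in $V$. With this correction inserted, your proof goes through.
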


\begin{proof} The argument is inspired by the one used in the proof of \cite[Satz IV]{specker}. Let $(K,S)$ be a compact generating system of $G$. It suffices to prove the following statements, for all $n,m\geq 1$: if the
associated Cayley--Abels graph $\Gamma(G,K,S)$ has at least $n$ ends, then the dimension of $\dH^1(G, \Q[G/K])$ is at least $n-1$; if the dimension of $\dH^1(G, \Q[G/K])$ is at least $m$, then $\Gamma(G,K,S)$ has at least $m+1$ ends.

Assume that $\Gamma(G,K,S)$ has at least $n$ ends. Then there exists a finite subgraph $\Gamma'$ such that $\Gamma(G,K,S)-\Gamma'$ has at least $n$ infinite connected components $C_1,\ldots, C_n$. Let $B_{C_1},\ldots,B_{C_n}$ be the infinite almost $(G,K)$-invariant subsets determined by $C_1,\ldots,C_n$, respectively (cf.~Example~\ref{ex:alm inv}). Observe now that the characteristic functions $\chi_{_{B_{C_1}}},\ldots,\chi_{_{B_{C_{n-1}}}}$ are linearly independent modulo $C(G/K)+\Q[G/K]^K$. Indeed, any $\Q$-linear combination $t_1\chi_{_{B_{C_1}}}+\cdots+t_{n-1}\chi_{_{B_{C_{n-1}}}}$ vanishes (at least) on the infinite set $B_{C_n}$. Since $B_{C_i}\cap B_{C_j}=\emptyset$ for every $i\neq j$, the only $\Q$-linear combination $t_1\chi_{_{B_{C_1}}}+\cdots+t_{n-1}\chi_{_{B_{C_{n-1}}}}$ that is constant almost everywhere is the one with $t_1=\ldots=t_{n-1}=0$. By \eqref{eq:alm inv}, one concludes that the dimension of $\dH^1(G, \Q[G/K])$ is at least $n-1$.

For every $B\subseteq G/K$ let $C_B=\{gK\in G/K \mid g^{-1}K\in B\}$. One checks that $B$ is finite if, and only if, $C_B$ is finite. Assume that the dimension of $\dH^1(G, \Q[G/K])$ is at least $m$. 
By \eqref{eq:alm inv} and~\cite[Lemma~4.4]{cast:cd1}, there exist
$m$ almost $(G,K)$-invariant subsets $B_1,\dots, B_m$ of $G/K$ whose characteristic functions $\chi_{B_1}, \ldots, \chi_{B_m}$ are linearly independent modulo $C(G/K)+\Q[G/K]^K$.
Denote by $\delta C_{B_i}$ the set of vertices in $\Gamma(G,K,S)$ which are not in $C_{B_i}$ but are adjacent to a vertex of $C_{B_i}$. As $B_i$ is infinite and almost $(G,K)$-invariant, $C_{B_i}$ is infinite and $\delta C_{B_i}$ is finite (for the finiteness of $\delta C_{B_i}$ one can argue as in Example~\ref{ex:alm inv}). Let $\Gamma'$ be the subgraph of $\Gamma(G,K,S)$ spanned by $\delta C_{B_1}\cup\ldots\cup\delta C_{B_m}$, and denote by $D_1,\ldots,D_p$ the sets of vertices of the $p$ infinite connected components of $\Gamma(G,K,S)-\Gamma'$, respectively.
Observe that 
\begin{equation*}
    G/K=\euV(\Gamma(G,K,S))=F\sqcup \bigsqcup_{j=1}^pD_j,
\end{equation*}
where $F$ is the (finite) set collecting all the elements of $\delta C_{B_1}\cup \ldots \cup \delta C_{B_m}$ and all vertices of any finite connected component of $\Gamma(G,K,S)-\Gamma'$.
In particular, for every $1\leq i\leq m$ one has
\begin{equation}\label{eq:CBi}
    C_{B_i}=(C_{B_i}\cap F)\sqcup \bigsqcup_{j=1}^p(C_{B_i}\cap D_j).
\end{equation}
We claim that, for all $1\leq i\leq m$ and $1\leq j\leq p$, either $C_{B_i}\supseteq D_j$ or $C_{B_i}\cap D_j=\emptyset$. Suppose indeed that 
there is $gK\in D_j\setminus C_{B_i}$. Since $D_j\cap \delta C_{B_i}=\emptyset$, every adjacent vertex of any $hK\in D_j\setminus C_{B_i}$ does not belong to $C_{B_i}$. Hence, every path in $\Gamma(G,K,S)$ with vertices in $D_j$ and starting at $gK$ has all its vertices in $D_j\setminus C_{B_i}$. Since $D_j$ spans a connected subgraph of $\Gamma(G,K,S)$, every vertex in $D_j$ can be connected to $gK$ by such a path. Then $D_j=D_j\setminus C_{B_i}$, which yields $D_j\cap C_{B_i}=\emptyset$.
Therefore, by \eqref{eq:CBi}, for every $1\leq i\leq m$ there is $\{j_1, \ldots, j_{r_i}\}\subseteq \{1, \ldots,p\}$ such that 
\begin{equation}\label{eq:chiCBi}
    \chi_{C_{B_i}}=\chi_{C_{B_i}\cap F}+\sum_{k=1}^{r_i}\chi_{D_{j_k}}.
\end{equation}
Denote by $\mathcal{A}\caC(G/K,\Q)$ the space of all functions $f\colon G/K\longrightarrow \Q$ which are {\em constant almost everywhere}, i.e., $f$ is constant on all but finitely many elements of $G/K$. Moreover,
 $\mathcal{A}\caC(G/K,\Q)$ is regarded as a subspace of the space $\mathcal{F}(G/K,\Q)$ of all maps from $G/K$ to $\Q$. Since $\chi_{C_{B_i}\cap F}\in \mathcal{A}\caC(G/K,\Q)$ for every $1\leq i\leq m$, \eqref{eq:chiCBi} implies that the $\Q$-subspace $\caS_1$ in $\mathcal{F}(G/K,\Q)/\mathcal{A}\caC(G/K,\Q)$ generated by 
$\{\chi_{C_{B_i}}+\mathcal{A}\caC(G/K,\Q)\}_{i=1}^m$ is contained in the $\Q$-subspace $\caS_2$ generated by $\{\chi_{D_j}+\mathcal{A}\caC(G/K,\Q)\}_{j=1}^p$. Note that $\dim_\Q \caS_1=m$. Indeed, $\chi_{B_i}, \ldots, \chi_{B_m}$ are linearly independent modulo $C(G/K)+\Q[G/K]^K$ (and so modulo $\mathcal{A}\caC(G/K,\Q)$) and, for all $t_1,\ldots, t_m\in \Q$ and $x\in G$, one has
\begin{equation*}
    \Big(t_1\chi_{C_{B_1}}+\ldots +t_m\chi_{C_{B_m}}\Big)(xK)=\Big(t_1\chi_{B_1}+\ldots +t_m\chi_{B_m}\Big)(x^{-1}K).
\end{equation*}
Moreover, arguing as in the first part of the proof, $\chi_{D_1},\ldots,\chi_{D_{p-1}}$ are linearly independent modulo $\mathcal{A}\caC(G/K,\Q)$. However, $\chi_{D_1}, \ldots, \chi_{D_p}$ are linearly dependent modulo $\mathcal{A}\caC(G/K,\Q)$: observe for example that $\sum_{j=1}^p\chi_{D_j}$ is the constant function $1$ almost everywhere.
Thus $p-1=\dim_\Q\caS_2\geq \dim_\Q\caS_1=m$. Since every $D_j$ spans an infinite connected component of $\Gamma(G,K,S)-\mathrm{span}(F)$, one concludes that the number of ends of $\Gamma(G,K,S)$ is at least $p\geq m+1$.
\end{proof}
\begin{cor}\label{cor:end1}
     Given a compactly generated t.d.l.c.~group $G$,  the dimension of the $\Q$-vector space $\dH^1(G, \Q[G/K])$ is the same for all compact open subgroups $K$ of~$G$. 
\end{cor}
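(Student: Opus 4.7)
The plan is to derive Corollary~\ref{cor:end1} as an essentially immediate consequence of Theorem~\ref{thm:ends} together with the well-definedness of the number of ends $e(G)$ as an invariant of the compactly generated t.d.l.c.~group $G$.

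First I would split into two cases according to whether $G$ is compact or not. If $G$ is compact, then for any compact open subgroup $K$ the quotient $G/K$ is finite, so $\Q[G/K]$ is a rational discrete $\Q[G]$-module and, since the rational discrete cohomology of a compact t.d.l.c.~group vanishes in positive degrees (cf.~\cite[Proposition~3.3]{cw:qrat}), one obtains $\dH^1(G,\Q[G/K])=0$ regardless of $K$, so the dimension is $0$ for every choice of $K$.

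If $G$ is non-compact, then Theorem~\ref{thm:ends} applies: for every compact open subgroup $K$ one has
\begin{equation*}
\dim_\Q\dH^1(G,\Q[G/K]) = e(G)-1
\end{equation*}
in case $G$ has finitely many ends, and $\dim_\Q\dH^1(G,\Q[G/K])=\infty$ in case $G$ has infinitely many ends. The key point is that the right-hand side depends only on $G$ and not on the chosen compact open subgroup $K$. This is precisely because the number of ends $e(G)$ is defined as the number of ends of any Cayley--Abels graph of $G$ and all such graphs are mutually quasi-isometric (as recalled in the preamble to Theorem~\ref{thm:ends}), so $e(G)$ is a genuine invariant of the t.d.l.c.~group $G$. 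Combining the two cases yields the claim.

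There is really no significant obstacle here: the content of the corollary has already been absorbed into the statement and proof of Theorem~\ref{thm:ends}, and the corollary merely extracts the observation that the formula $\dim_\Q\dH^1(G,\Q[G/K])=e(G)-1$ does not involve $K$. The only point requiring a moment's attention is the compact case, which is not covered by Theorem~\ref{thm:ends} but is handled by the vanishing of positive-degree rational discrete cohomology of compact groups.
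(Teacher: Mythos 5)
Your proof is correct and follows the same route the paper intends: the corollary is stated immediately after Theorem~\ref{thm:ends} precisely because the dimension formula there depends only on $e(G)$, which is an invariant of $G$ independent of the choice of $K$, and the compact case is disposed of by the vanishing of positive-degree rational discrete cohomology (the paper itself invokes the equivalence of compactness with $\ccd_\Q(G)=0$ via \cite[Proposition~3.7(a)]{cw:qrat}). No gap.
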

Let $\mathcal{C}\mathcal{O}(G)$ be the poset of all compact open subgroups of $G$ ordered by reverse inclusion $\prec$. Recall that the rational discrete standard bimodule of $G$ is defined in~\cite[Section~4.2]{cw:qrat} as 
\begin{equation*}
    \BiG:=\varinjlim_{K\in \mathcal{C}\mathcal{O}(G)}\Big(\Q[G/K],\eta_{K,H}\Big),
\end{equation*}
where, for every $K\prec H$, $\eta_{K,H}\colon\Q[G/K]\to\Q[G/H]$ is the $\Q[G]$-module homomorphism given by $\eta_{K,H}(K)={|K:H|}^{-1}\sum_{r\in\caR_{K,H}}rH$. Here $\caR_{K,H}$ denotes a set of representatives of the left cosets of $H$ in $K$ that contains $1$.
\begin{thm}\label{thm:end2}
    Let $G$ be a non-compact compactly generated t.d.l.c.~group.  The $\Q$-vector space $\dH^1(G,\BiG)$ has dimension $e(G)-1$ if $G$ has finitely many ends. Moreover, $G$ has infinitely many ends if and only if $\dH^1(G, \BiG)$ is infinite-dimensional.
\end{thm}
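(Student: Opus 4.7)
My plan is to reduce the theorem to Theorem~\ref{thm:ends} by exploiting that $\BiG$ is by construction the filtered colimit $\varinjlim_{K\in\mathcal{C}\mathcal{O}(G)}\Q[G/K]$ with structure maps $\eta_{K,H}$. Since $G$ is compactly generated, it is of type $\mathrm{FP}_1$ over $\Q$ in the sense of~\cite{cw:qrat}: the trivial rational discrete $\Q[G]$-module admits a partial projective resolution by finitely generated permutation modules in degrees $\leq 1$, built for instance from the vertex and edge stabilisers of a Cayley--Abels graph of $G$. Standard homological algebra then yields
\begin{equation*}
\dH^1(G,\BiG)\;\cong\;\varinjlim_K \dH^1(G,\Q[G/K]).
\end{equation*}

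Next, I will verify that for every $H\subseteq K$ in $\mathcal{C}\mathcal{O}(G)$ the structure map induces an injection $\eta_{K,H}^*\colon\dH^1(G,\Q[G/K])\hookrightarrow\dH^1(G,\Q[G/H])$. Under the identification~\eqref{eq:alm inv}, a derivation-level computation shows that $\eta_{K,H}^*$ sends the class of $\chi_B$, for an almost $(G,K)$-invariant set $B\subseteq G/K$, to $|K:H|^{-1}$ times the class of $\chi_{\pi^{-1}(B)}$, where $\pi\colon G/H\to G/K$ is the canonical projection; more generally, it sends the class of an almost $(G,K)$-invariant function $f$ to that of $|K:H|^{-1}(f\circ\pi)$. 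Since $\pi$ is finite-to-one, pullback along $\pi$ preserves the property of being constant almost everywhere as well as that of being finitely supported, so distinct classes at level $K$ produce distinct classes at level $H$; hence $\eta_{K,H}^*$ is injective.

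Combining the two steps, Theorem~\ref{thm:ends} gives $\dim_\Q \dH^1(G,\Q[G/K]) = e(G)-1$ for every $K$. In the case $e(G)<\infty$, a filtered colimit of finite-dimensional $\Q$-vector spaces of the same dimension with injective transition maps retains that common dimension, whence $\dim_\Q\dH^1(G,\BiG)=e(G)-1$. If instead $e(G)=\infty$, each term of the colimit is already infinite-dimensional, and so is $\dH^1(G,\BiG)$. This yields both halves of the claim.

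The main obstacle lies in the first step: confirming rigorously that $\dH^1(G,-)$ commutes with filtered colimits of rational discrete $\Q[G]$-modules under the sole assumption of compact generation. While the analogous statement for discrete groups of type $\mathrm{FP}_n$ is classical, its adaptation to the rational discrete setting of~\cite{cw:qrat} requires a careful choice of partial projective resolution by finitely generated permutation modules, together with the observation that $\Hom$ out of finite direct sums commutes with colimits. Once this technical point is in place, the remainder of the argument reduces to the combinatorial bookkeeping of almost-invariant sets carried out in the proof of Theorem~\ref{thm:ends}.
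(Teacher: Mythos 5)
Your overall architecture coincides with the paper's proof of Theorem~\ref{thm:end2}: both identify $\dH^1(G,\BiG)$ with the colimit $\varinjlim_{K\in\mathcal{CO}(G)}\dH^1(G,\Q[G/K])$, check that the transition maps $\dH^1(\eta_{H,K})$ are injective, and then conclude from Corollary~\ref{cor:end1} (i.e.\ Theorem~\ref{thm:ends}) together with elementary facts about directed colimits of $\Q$-vector spaces (the paper's Facts~1 and~2). Your explicit description of the induced map as $|K:H|^{-1}$ times pullback along the projection $\pi\colon G/H\to G/K$ under the identification~\eqref{eq:alm inv} is a correct direct verification of the injectivity that the paper obtains by citing the end of the proof of \cite[Proposition~4.7]{cw:qrat}.

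The one genuine weakness is the first step, which you yourself flag as unresolved. Type $\mathrm{FP}_1$ (equivalently, compact generation) does \emph{not} in general make $\dH^1(G,-)$ commute with filtered colimits: given a projective resolution $P_2\to P_1\to P_0$ with only $P_0,P_1$ finitely generated, the comparison map $\varinjlim\dH^1(G,M_i)\to\dH^1(G,\varinjlim M_i)$ is injective, but surjectivity requires lifting a cocycle $f\colon P_1\to\varinjlim M_i$ to some $f_i\colon P_1\to M_i$ and knowing that $f_i\circ d_2=0$; in general this needs $P_2$ finitely generated, i.e.\ type $\mathrm{FP}_2$ (compact presentation), which you have not assumed. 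For the specific directed system at hand the gap is closable without that hypothesis, because each $\eta_{K,H}$ is a split monomorphism (post-composing with the natural projection $\Q[G/H]\to\Q[G/K]$ recovers the identity), so the canonical maps $\Q[G/K]\to\BiG$ are injective and $f_i\circ d_2=0$ follows at once from $f\circ d_2=0$. You should either add this observation explicitly or invoke \cite[Remark~3.4]{cast:cd1}, which is how the paper disposes of this point.
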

\begin{proof}
    For all compact open subgroups $H\subseteq K$ of $G$, 
$\eta_{K,H}\colon\Q[G/K]\to\Q[G/H]$ induces the injective map $\dH^1(\eta_{H,K})\colon \dH^1(G,\Q[G/K])\to \dH^1(G,\Q[G/H])$ (cf.~\cite[End of proof of Proposition~4.7]{cw:qrat}).
    By~\cite[Remark~3.4]{cast:cd1}, one deduces that
$$\dH^1(G,\BiG)\cong\varinjlim_{K\in\mathcal{CO}(G)}\big(\dH^1(G,\Q[G/K]),\dH^1(\eta_{H,K})\big),$$
where the set of all compact open subgroups $\mathcal{CO}(G)$ is ordered by reverse inclusion.

    The claim then follows by Corollary~\ref{cor:end1} and the following two facts:

    \smallskip
\noindent\textbf{Fact~1:} If $\dH^1(G,\Q[G/\caO])$ has infinite dimension over $\Q$ for some $\caO\in \mathcal{CO}(G)$, then $\dH^1(G,\BiG)$ 
has infinite dimension over $\Q$
(cf.~\cite[Proposition~1.2.4(c)]{ribzal}).

     \noindent\textbf{Fact~2:} If $\dim_{\Q}\dH^1(G,\Q[G/\caO])=d<\infty$ for some $\caO\in \mathcal{CO}(G)$, then the space $\dH^1(G,\Q[G/\caO])$ 
     has dimension $d$. (To see this it suffices to observe that, by Corollary~\ref{cor:end1}, $\dH^1(\eta_{H,K})$ is a linear isomorphism for all $H$ and $K$ in $\mathcal{CO}(G)$ and apply \cite[Proposition~1.2.4(c)-(d)]{ribzal})

\end{proof}

\begin{cor}\label{cor:end3}
    Let $G$ be a compactly generated t.d.l.c.~group. Then, 
    \begin{equation*}
        e(G)=1-\dim_\Q\dH^0(G,\BiG)+\dim_\Q\dH^1(G,\BiG).
    \end{equation*}
\end{cor}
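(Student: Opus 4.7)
The plan is to split into the compact and non-compact cases and, in each, compute $\dim_\Q\dH^0(G,\BiG)$ directly before invoking Theorem~\ref{thm:end2}. The key algebraic input is that the construction $M\mapsto M^G = \dH^0(G,M)$ commutes with the directed colimit $\BiG = \varinjlim_{K\in\mathcal{CO}(G)}\Q[G/K]$, so one has
\begin{equation*}
\dH^0(G,\BiG) \;\cong\; \varinjlim_{K\in\mathcal{CO}(G)}\bigl(\Q[G/K]\bigr)^G.
\end{equation*}
For any compact open subgroup $K$ of $G$, the $G$-invariants $(\Q[G/K])^G$ consist of finitely supported functions on the transitive $G$-set $G/K$, hence are spanned by $\sum_{gK\in G/K}gK$ when $G/K$ is finite and are zero otherwise.

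First I would handle the case where $G$ is compact. Then $e(G)=0$, every compact open $K$ has $G/K$ finite, and under the structural maps $\eta_{K,H}$ the generator $\sum_{gK}gK$ is sent to $|K\!:\!H|^{-1}\sum_{gH}gH$, so the colimit $\dH^0(G,\BiG)$ is one-dimensional. Since $G$ is compact, $\ccd_\Q(G)=0$ (cf.~\cite[Proposition~3.7]{cw:qrat}), so $\dH^1(G,\BiG)=0$. The right-hand side of the formula thus equals $1-1+0=0=e(G)$.

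Next, I would treat the non-compact case. Here every $G/K$ is infinite, so $(\Q[G/K])^G=0$ for every compact open $K$, and the colimit computation gives $\dH^0(G,\BiG)=0$. Theorem~\ref{thm:end2} then supplies the value of $\dim_\Q\dH^1(G,\BiG)$: if $G$ has finitely many ends, it equals $e(G)-1$, producing $1-0+(e(G)-1)=e(G)$; if $G$ has infinitely many ends, $\dim_\Q\dH^1(G,\BiG)=\infty$, and the formula reads $1-0+\infty=\infty=e(G)$.

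The only subtle point in the argument is justifying the interchange of $(-)^G$ and the directed colimit $\varinjlim_{K}\Q[G/K]$. I expect this to be the main thing to verify carefully, but it is a general property of filtered colimits of abelian groups together with the observation that $\BiG^G$ can be identified elementwise: any $G$-invariant element of $\BiG$ is represented by some $x\in\Q[G/K]$, and once $x$ is replaced by a further image in $\Q[G/H]$ for $H$ sufficiently small (resp.\ equal to $K$ in the compact case) one may arrange that $x$ itself is $G$-invariant. Once this is in place the three cases combine to give the stated equality.
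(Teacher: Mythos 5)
Your argument is correct and follows essentially the same route as the paper: both reduce the $\dH^1$ term to Theorem~\ref{thm:end2} and the case distinction $e(G)=0\Leftrightarrow G$ compact, the only difference being that the paper simply cites \cite[Proposition~4.3(b)]{cw:qrat} for the value of $\dim_\Q\dH^0(G,\BiG)$ (namely $1$ for $G$ compact and $0$ otherwise), which you instead derive directly. Your one flagged subtlety --- interchanging $(-)^G$ with the colimit defining $\BiG$ --- is cleanly settled by noting that the structural maps $\eta_{K,H}$ are injective, so the class of $x\in\Q[G/K]$ in $\BiG$ is $G$-invariant if and only if $x$ itself is, whence $\BiG^G\cong\varinjlim_K(\Q[G/K])^G$ as you claim.
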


\begin{proof}
    The statement follows from the fact that $e(G)=0$ if, and only if, $G$ is compact, from Theorem~\ref{thm:end2} and \cite[Proposition~4.3(b)]{cw:qrat}.
\end{proof}

\section{Invariants of t.d.l.c.~groups acting on buildings}
\subsection{Buildings}\label{sss:build}
 An example of a building of type $(W,S)$ is the \textit{abstract Coxeter complex $\Sigma(W,S)=(W, \delta_W)$ of type $(W,S)$},
where $\delta_W(w_1, w_2)=w_1^{-1}w_2$ for all $w_1, w_2\in W$ (cf.~\cite[Example~18.1.3]{dav:book}). 
Given two buildings $\Delta=(\caC, \delta)$ and $\Delta'=(\caC', \delta')$ of type $(W,S)$, a map $\varphi\colon\caC\to \caC'$ is a \textit{$W$-isometry} if, for all $c_1, c_2\in \caC$,
\begin{equation}
    \delta'(\varphi(c_1), \varphi(c_2))=\delta(c_1, c_2).
\end{equation}
Such a map is necessarily injective.
Following~\cite[p.~333]{dav:book}, for every building $\Delta=(\caC, \delta)$ of type $(W,S)$ there is a $W$-isometry $W\to\caC$ mapping $1_W$ to a prescribed chamber $c\in \caC$. An \textit{apartment} of $\Delta$ is any $W$-isometric image of $W$ in $\caC$. 
The following fact collects some properties about apartments in a building.

\begin{fact}[\protect{cf.~\cite[p.~333]{dav:book}~and~\cite[\S 4]{kramer}}]\label{fact:Wbuild}
    Let $\Delta=(\caC, \delta)$ be a building of type $(W,S)$.
        \begin{itemize}
            \item[(i)] For every apartment $\Sigma$ of $\Delta$, the chamber system $(\caC(\Sigma), \delta\vert_{\caC(\Sigma)})$ is a building of type $(W,S)$. 
            \item[(ii)] There exists a collection of apartments $\mathfrak{A}$, called \emph{atlas} of $\Delta$, with the following property: every two chambers $c$ and $d$ of $\Delta$ are both contained in the set of chambers of some $\Sigma\in \mathfrak{A}$. In particular, one has
            \begin{equation*}
                  \caC=\bigcup_{\Sigma\in \mathfrak{A}}\caC(\Sigma).
            \end{equation*}
        \end{itemize}
\end{fact}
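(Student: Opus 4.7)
The plan is to handle the two parts separately. For \textup{(i)}, by definition the apartment $\Sigma$ arises from a $W$-isometric embedding $\alpha\colon W\to\caC$. Hence the bijection $\alpha\colon W\to\caC(\Sigma)$ is, by construction, an isomorphism of chamber systems with $W$-valued distance between $(\caC(\Sigma),\delta|_{\caC(\Sigma)})$ and the abstract Coxeter complex $(W,\delta_W)=\Sigma(W,S)$. Since $\Sigma(W,S)$ was already recorded to be a building of type $(W,S)$, the same holds for $(\caC(\Sigma),\delta|_{\caC(\Sigma)})$, which is what was wanted.

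For \textup{(ii)}, fix $c,d\in\caC$ and set $w:=\delta(c,d)$. The goal is to produce a $W$-isometry $\alpha\colon W\to\caC$ with $\alpha(1_W)=c$ and $\alpha(w)=d$; once this is available, the desired atlas is obtained by collecting all such $W$-isometric images (for instance one per pair $(c,d)$). First I would pick a reduced expression $w=s_1\cdots s_n$ and, applying the gallery-extension building axiom iteratively, produce a minimal gallery $c=c_0,c_1,\ldots,c_n=d$ with $\delta(c_{i-1},c_i)=s_i$. Setting $\alpha(s_1\cdots s_k):=c_k$ for $0\le k\le n$ defines $\alpha$ along this chosen reduced word and already yields $\alpha(1_W)=c$ and $\alpha(w)=d$.

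Next, I would extend $\alpha$ to all of $W$ by induction on word length, again using the gallery-extension axiom to select, for each new $v\in W$ and each fixed reduced expression $v=s_{i_1}\cdots s_{i_k}$, a chamber $\alpha(v)\in\caC$ lying at the prescribed $\delta$-distances. The genuine obstacle, which is the content of the statement, is showing that the outcome is independent of the chosen reduced expression. By Matsumoto's theorem any two reduced expressions for the same element of $W$ are connected by a finite sequence of braid moves, so it suffices to verify that a single braid move, translated into $\Delta$ via the axioms, leaves the endpoint chamber unchanged. This is most efficiently handled through the retraction $\rho_c\colon\caC\to W$, $\rho_c(e):=\delta(c,e)$: the exchange condition built into the building axioms forces $\alpha(v)$ to be the unique chamber with certain prescribed $\delta$-distances to the already-constructed intermediate chambers, so the dependence on the word disappears.

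Once $\alpha$ is well-defined, I would verify the $W$-isometry property $\delta(\alpha(v_1),\alpha(v_2))=v_1^{-1}v_2$ for all $v_1,v_2\in W$ by a double induction on $\ell(v_1)+\ell(v_2)$, using the axiom that propagates the Weyl distance along $s$-adjacencies to reduce to the inductively available case. This confirms that $\alpha$ is a $W$-isometry, so that $\alpha(W)$ is an apartment in the sense of the paper containing both $c$ and $d$. The equality $\caC=\bigcup_{\Sigma\in\mathfrak{A}}\caC(\Sigma)$ then follows immediately, since one can fix a reference chamber $c$ and apply the construction to every pair $(c,d)$ with $d\in\caC$.
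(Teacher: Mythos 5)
The paper offers no proof of this Fact at all: it is imported from the literature (Davis, p.~333, and Kramer, \S 4), so you are attempting to reprove the isometry-extension theorem for buildings from scratch. Your part~(i) is fine and essentially immediate from the definitions: an apartment is by definition the image of a $W$-isometry $\alpha\colon W\to\caC$, so $\alpha$ is an isomorphism of $W$-metric chamber systems onto $(\caC(\Sigma),\delta|_{\caC(\Sigma)})$, and being a building of type $(W,S)$ is preserved under such isomorphisms.

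Part~(ii), however, has a genuine gap at the extension step, and the gap is precisely where all the content of the statement lies. The difficulty you isolate --- independence of the chosen reduced expression, handled via Matsumoto's theorem and the retraction $\rho_c$ --- is not the real obstruction. In a thick building the construction is not deterministic: at each stage there are many chambers $s$-adjacent to $\alpha(v')$ satisfying $\delta(c,\,\cdot\,)=v$, so your claim that ``the exchange condition \dots forces $\alpha(v)$ to be the unique chamber with certain prescribed $\delta$-distances'' is false as stated. What must actually be shown is that among these many candidates one can be chosen which is \emph{simultaneously} at $W$-distance $u^{-1}v$ from $\alpha(u)$ for \emph{every} previously constructed $u$ --- in particular from $d=\alpha(w)$, a constraint your construction must preserve at every step but never revisits. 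Axiom (WD3) produces a chamber at the correct distance from a single reference chamber, and (WD2) forces the correct distance from $\alpha(u)$ only when $\ell(u^{-1}v)>\ell(u^{-1}v')$; when $\ell(u^{-1}v)<\ell(u^{-1}v')$ the distance from $\alpha(u)$ is \emph{not} forced and depends on the choice made, so the coherence of the choices across all $u$ is exactly what needs proving. The standard arguments (Abramenko--Brown, Theorem~5.73, or Davis's treatment cited by the paper) handle this using projections onto residues and the gate property; nothing playing that role appears in your sketch, and without it the induction does not close.
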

A \emph{gallery} $\gamma$ in $\Delta$ is a sequence of chambers $(c_0,c_1, \ldots, c_n)$ satisfying $\delta(c_i, c_{i+1})=s_i\in S$ for every $0\leq i\leq n-1$. The sequence $(s_1, \ldots, s_n)$ is called the \emph{type} of $\gamma$.
For all $J\subseteq S$ and $c\in \caC$, the \textit{$J$-residue of $\Delta$ centred at $c$} is defined as
\begin{equation*}\label{eq:res}
    \mathrm{Res}_J(c):=\{d\in \caC \mid \delta(c,d)\in W_J\}.
\end{equation*}
In other words, $\Res_J(c)$ is the collection of all $d\in \caC$ for which either $d=c$ or there is a gallery $(c_0=c, c_1,\ldots, c_n=d)$ in $\Delta$ whose type $(s_1, \ldots, s_n)$ satisfies $s_i\in J$ for every $1\leq i\leq n$ (cf.~\cite[Definition~5.26]{ab:build}). In particular, for all $c,d\in \caC$ and $J\subseteq S$, one deduces that
\begin{equation}\label{eq:res=}
    \Res_J(c)=\Res_J(d) \Longleftrightarrow \delta(c,d)\in W_J.
\end{equation}
A $J$-residue is called \textit{spherical} if $J$ is a spherical subset of $S$. The building $\Delta$ is said to be \textit{locally finite} if, for every $s\in S$, all $\{s\}$-residues are finite. Denote by $\caR$ the set of all residues of $\Delta$, by $\caR_{\mathrm{sph}}$ the set of all spherical ones and, given $J\subseteq S$, let $\caR_J$ be the set of all $J$-residues of $\Delta$. 
\subsection{The Davis' complex of a building}\label{sss:dav}
Following \cite[\S 5]{kramer}, we recall here a notable simplicial complex associated to a building $\Delta=(\caC,\delta)$. The \textit{Davis' complex $\DeltaDav$ of $\Delta$} is the simplicial complex whose $k$-simplices, for $k\geq 0$, are all chains 
\begin{equation}
    \mathrm{Res}_{J_0}(c)\subsetneq \cdots \subsetneq  \mathrm{Res}_{J_k}(c),
\end{equation}
where $c\in \caC$ and $\mathrm{Res}_{J_0}(c), \dots, \mathrm{Res}_{J_k}(c)\in \caR_{\mathrm{sph}}$ with $J_0\subsetneq J_1 \subsetneq \cdots \subsetneq J_m$.
The simplicial complex $\DeltaDav$ is locally finite if, and only if, $\Delta$ is locally finite (cf.~\cite[p.~22]{kramer}). By~\cite[Corollary~18.3.6]{dav:book}, the geometric realisation $|\DeltaDav|$ of $\DeltaDav$ is contractible (with respect to the weak topology) and is usually called the \textit{Davis' realisation of $\Delta$}. Moreover, the augmented cellular chain complex associated to $|\DeltaDav|$ is exact (cf.~\cite[\S A.3]{cw:qrat}).
\subsection{The rational discrete cohomological dimension}\label{s:coho}

Given a locally finite building $\Delta$, denote by $\Hc^\bullet(|\DeltaDav|, \Q)=\Hc^\bullet(|\DeltaDav|, \Z)\otimes_{\Z} \Q$ the cohomology with compact support of the geometric realisation of $\DeltaDav$ with rational coefficients. Furthermore, let $C_c(G, \Q)$ be the space of all locally constant functions from $G$ to $\Q$ with compact support and let $\ccd_\Q(W)$ be the ordinary rational cohomological dimension of $W$ (cf.~Section~\ref{sus:vcd}). Moreover, for a t.d.l.c.~group $G$ we denote by $\ccd_\Q(G)$ the \emph{rational discrete cohomological dimension} of $G$ as defined in~\cite[\S 3.4]{cw:qrat}.
\begin{lem}\label{lem:cd}
For every locally finite building $\Delta$ of type $(W,S)$, one has 
$$\ccd_\Q(W)=\max\{n\geq 0 : \Hc^n(|\DeltaDav|, \Q)\neq 0\}.$$
\end{lem}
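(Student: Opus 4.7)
The plan is to reduce the statement to the special case of the abstract Coxeter complex $\Sigma(W,S)$, for which the right-hand side can be computed directly via standard group cohomology, and to transport the conclusion to an arbitrary locally finite $\Delta$ via the explicit decomposition of compactly supported cohomology due to Davis, Dymara, Januszkiewicz, Meier and Okun.

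First, I would invoke the formula from~\cite{compsup}. For a locally finite building $\Delta$ of type $(W,S)$, they obtain a decomposition of the shape
$$\Hc^n(|\DeltaDav|,\Q) \cong \bigoplus_{T\in\mathfrak{S}} H^n(K,K^{S\setminus T};\Q)\otimes_\Q \widehat{\mathcal A}_T(\Delta),$$
where $\mathfrak S$ is the family of spherical subsets of $S$, $K$ is a finite simplicial complex built from $(W,S)$ only, and $\widehat{\mathcal A}_T(\Delta)$ is a $\Q$-vector space recording the combinatorics of the $T$-residues of $\Delta$. The crucial observation is that the topological factors $H^n(K,K^{S\setminus T};\Q)$ depend only on $(W,S)$, while the factors $\widehat{\mathcal A}_T(\Delta)$ are non-zero for every spherical $T$ whenever $\Delta$ is locally finite. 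Consequently, the set of degrees $n$ in which $\Hc^n(|\DeltaDav|,\Q)$ is non-zero is independent of the particular building of type $(W,S)$ and, in particular, agrees with that computed from the thin building $\Sigma(W,S)$.

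Second, I would specialise to $\Delta=\Sigma(W,S)$, whose Davis realisation $|\Sigma(W,S)_{\mathrm{Dav}}|$ is the classical Davis complex of $W$. The Coxeter group $W$ acts on this contractible CW-complex properly and cocompactly, with finite stabilisers (the spherical special subgroups). Because the orders of the stabilisers are invertible in $\Q$, the standard equivariant cohomology machinery (see, e.g., \cite[Ch.~VIII]{br:coho}) produces a canonical isomorphism
$$\Hc^n(|\Sigma(W,S)_{\mathrm{Dav}}|,\Q) \cong \mathrm{H}^n(W,\Q W) \qquad \text{for every } n\geq 0.$$
By the very definition of $\ccd_\Q(W)$ as the largest $n$ for which $\mathrm{H}^n(W,\Q W)\neq 0$, combining this identification with the Weyl-invariance established in the first step yields the asserted equality.

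The principal obstacle is the first step: one must verify that the building-dependent factors $\widehat{\mathcal A}_T(\Delta)$ in the DDJMO decomposition never vanish for spherical $T$ when $\Delta$ is locally finite, so that the non-vanishing of $\Hc^n(|\DeltaDav|,\Q)$ is genuinely detected by the purely combinatorial factor $H^n(K,K^{S\setminus T};\Q)$. Once this point is extracted from~\cite{compsup}, the remainder of the argument is routine equivariant group cohomology for the $W$-action on its Davis complex.
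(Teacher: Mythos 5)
Your argument is correct and is essentially the one the paper relies on: its proof of this lemma is a one-line pointer to the argument for equality (6.15) in \cite{cw:qrat}, whose content is exactly what you spell out — the decomposition of $\Hc^\ast(|\DeltaDav|,\Q)$ from \cite{compsup} together with the non-vanishing of the (torsion-free) factors $\widehat{A}^T(\Delta)$ for spherical $T$, reducing everything to the thin building $\Sigma(W,S)$, where the proper cocompact $W$-action with finite (hence $\Q$-invertible) stabilisers identifies $\Hc^n(|\Sigma(W,S)_{\mathrm{Dav}}|,\Q)$ with $\mathrm{H}^n(W,\Q W)$. The same two ingredients reappear verbatim in the paper's own proof of Proposition~\ref{prop:ecd}, so no further justification is needed.
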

\begin{proof}
    The argument used to prove the equality~(6.26) 
    in \cite{cw:qrat} can be transferred verbatim.
\end{proof}
\begin{lem}\label{lem:coho}
    Let $G$ be a t.d.l.c.~group acting on a locally finite building $\Delta$ of type $(W,S)$ with compact stabilisers and finitely many orbits. Then, for every $k\geq 0$, one has a canonical isomorphism of $\Q$-vector spaces
    \begin{equation}\label{eq:coho1}
          \dH^k(G, C_c(G, \Q))\simeq \Hc^k(|\Delta_{\mathrm{Dav}}|, \Q).
    \end{equation}
\end{lem}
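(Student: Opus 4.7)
The plan is to apply the general transfer principle from \cite[Section~6.8]{cw:qrat}: if a t.d.l.c.\ group $G$ acts cellularly on a contractible locally finite CW-complex $X$ with compact open cell-stabilisers and finitely many orbits of cells, then for every $k\geq 0$ there is a canonical isomorphism
\begin{equation*}
\dH^k(G,C_c(G,\Q))\simeq \Hc^k(|X|,\Q).
\end{equation*}
The task is therefore to verify that $X=|\DeltaDav|$ together with the induced $G$-action satisfies these hypotheses.

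First, I would promote the $G$-action on $\caC$ to a simplicial $G$-action on $\DeltaDav$. Since $G$ acts on $\Delta$ by type-preserving $W$-isometries, it permutes the spherical residues by the rule $g\cdot\Res_J(c)=\Res_J(g\cdot c)$, and this preserves strict inclusion of residues. Hence $G$ acts simplicially on $\DeltaDav$ and continuously on $|\DeltaDav|$. The complex $|\DeltaDav|$ is locally finite (because $\Delta$ is locally finite, see Subsection~\ref{sss:dav}) and contractible by~\cite[Corollary~18.3.6]{dav:book}, as already recalled in the excerpt.

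Next, I would check properness of the action, i.e.\ that the stabiliser of every simplex is compact open in $G$. A $k$-simplex of $\DeltaDav$ is a chain $\Res_{J_0}(c)\subsetneq\cdots\subsetneq\Res_{J_k}(c)$ of spherical residues, and its setwise stabiliser equals $\bigcap_{i=0}^{k}\stab_G(\Res_{J_i}(c))$. It therefore suffices to show that the setwise stabiliser of a spherical residue $R=\Res_J(c)$ is compact open. Since $\Delta$ is locally finite and $W_J$ is finite, the residue $R$ is a finite set of chambers; consequently $\stab_G(R)$ permutes the finitely many chambers of $R$, so the pointwise stabiliser $\bigcap_{d\in R}\stab_G(d)$ has finite index in $\stab_G(R)$. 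The pointwise stabiliser is a finite intersection of the hypothesised compact open chamber-stabilisers, hence compact open, and so $\stab_G(R)$ is compact open as well.

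Cocompactness of the action on $|\DeltaDav|$ is immediate: by hypothesis $G$ has finitely many orbits on $\caC$, the family of spherical subsets $\mathfrak S\subseteq 2^S$ is finite, and every spherical residue is of the form $\Res_J(c)$ for some $c\in\caC$ and $J\in\mathfrak S$; since a $k$-simplex is determined by a chamber and a chain in $\mathfrak S$ of length $k+1$, there are finitely many $G$-orbits of simplices in $\DeltaDav$. With all three hypotheses verified, the transfer principle from \cite[Section~6.8]{cw:qrat} delivers the canonical isomorphism~\eqref{eq:coho1}. The main technical point to spell out carefully is the compactness of the simplex-stabilisers, since it uses crucially that residues indexing simplices of $\DeltaDav$ are \emph{spherical} (and hence finite under the local-finiteness assumption); the rest of the argument is a routine translation into the framework of \cite{cw:qrat}.
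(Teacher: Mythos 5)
Your proposal is correct and follows essentially the same route as the paper: verify that $|\DeltaDav|$ is locally finite, finite-dimensional and contractible, that $G$ acts on it with compact open stabilisers and finitely many orbits of simplices, and then invoke the argument of \cite[Equation~(6.13), \S 6.8]{cw:qrat}. The only difference is that you prove the compactness and openness of the residue (hence simplex) stabilisers directly from the finiteness of spherical residues in a locally finite building, whereas the paper simply cites \cite[Lemma~5.13]{kramer} for this point.
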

\begin{proof}
    The simplicial complex $\Delta_{\mathrm{Dav}}$ is finite-dimensional, locally finite and has contractible geometric realisation (cf.~Section \ref{sss:dav}). By hypothesis, $G$ acts on $\DeltaDav$ with compact open vertex stabilisers (cf.~\cite[Lemma~5.13]{kramer}). Moreover, for all $g\in G$, $c\in \caC$ and $J\subseteq S$, one has $\Res_J(g\cdot c)=g\cdot \Res_J(c)$ and then $G$ has finitely many orbits on each skeleton of $\DeltaDav$. Now the argument used to prove \cite[Equation~(6.13)]{cw:qrat} can be transferred verbatim.
    \end{proof}
\begin{rem}\label{rem:actions}
More generally, there exist analogous isomorphisms to \eqref{eq:coho1} when replacing $\DeltaDav$ by any locally finite contractible simplicial complex $X$ of type $\mathrm{F}_\infty$ which is acted on by $G$ with compact open stabilisers (cf.~\cite[Proof of Theorem~6.7 and~Equation~(6.13)]{cw:qrat} for details).
\end{rem}
\begin{thm}\label{thm:ihbuil} 
     Let $G$ be a t.d.l.c.~group acting on a locally finite building $\Delta$ of type $(W,S)$ with compact stabilisers and finitely many orbits.
    Then
	\begin{equation}\label{eq:cdGW1}
		\ccd_\Q(G)=\ccd_\Q(W).
	\end{equation}
 In particular $G$ is compact if, and only if, $W$ is spherical.
\end{thm}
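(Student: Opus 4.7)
The plan is to chain together the two preceding lemmas with the characterisation of $\ccd_\Q$ in terms of cohomology with $C_c(G,\Q)$-coefficients established in \cite{cw:qrat}. First, by Lemma~\ref{lem:coho} there are canonical isomorphisms
\begin{equation*}
\dH^k(G, C_c(G, \Q)) \simeq \Hc^k(|\DeltaDav|, \Q) \qquad (k \geq 0),
\end{equation*}
so taking the largest degree in which either side does not vanish and applying Lemma~\ref{lem:cd} yields
\begin{equation*}
\sup\{k \geq 0 : \dH^k(G, C_c(G,\Q)) \neq 0\} = \ccd_\Q(W).
\end{equation*}

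The second step is to identify the left-hand side with $\ccd_\Q(G)$. Here I would invoke the general fact from \cite{cw:qrat} that $C_c(G,\Q)$ (equivalently $\BiG$) plays the role of the rational group algebra for the rational discrete cohomology theory of t.d.l.c.\ groups; in particular, under the hypothesis that $G$ admits a cocompact action on a finite-dimensional contractible complex (such as $|\DeltaDav|$) with compact open stabilisers, the rational discrete cohomological dimension is detected on $C_c(G,\Q)$. This substitution immediately gives~\eqref{eq:cdGW1}.

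For the "in particular" assertion, note that by definition $W$ is spherical precisely when $W$ is finite, which is equivalent to $\ccd_\Q(W)=0$; indeed, any infinite Coxeter group contains $\Z$ or an infinite dihedral subgroup of positive rational cohomological dimension, while finite groups have vanishing rational cohomology in positive degrees. On the $G$-side, a t.d.l.c.\ group $G$ satisfies $\ccd_\Q(G)=0$ if and only if $G$ is compact (as shown in \cite{cw:qrat}). Combining these two equivalences with~\eqref{eq:cdGW1} finishes the proof.

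The main obstacle here is purely bookkeeping: one has to make sure that the characterisation $\ccd_\Q(G) = \sup\{k : \dH^k(G, C_c(G,\Q)) \neq 0\}$ is legitimate in the present setting. This is not tautological from the definition (which quantifies over all rational discrete $G$-modules), but follows because the action on $|\DeltaDav|$ forces $G$ to be compactly presented and, indeed, of type $\mathrm{F}_\infty$ in the sense of rational discrete modules, so that $C_c(G,\Q)$ serves as a cohomological dimension-detecting module exactly as $\Z\Gamma$ does for a discrete group $\Gamma$ of type $\mathrm{FP}_\infty$. Once this reference is in place, the chain of equalities goes through without further calculation.
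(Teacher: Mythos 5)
Your proposal is correct and follows essentially the same route as the paper: both chain Lemma~\ref{lem:coho} and Lemma~\ref{lem:cd} together with the fact (from the cocompact action on the finite-dimensional contractible complex $|\DeltaDav|$, forcing $G$ to be of type $\mathrm{FP}_\infty$) that $\ccd_\Q(G)=\max\{n\mid \dH^n(G,C_c(G,\Q))\neq 0\}$, and both reduce the final assertion to the equivalence of compactness with $\ccd_\Q(G)=0$.
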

\begin{proof}  
    Arguing as in the proof of Lemma~\ref{lem:coho}, $G$ acts on $\DeltaDav$ with compact open stabilisers and finitely many orbits on each skeleton. In particular,  $\ccd_\Q(G)\leq \dim(\DeltaDav)$ and $G$ is of type $\mathrm{FP}_\infty$ (cf.~\cite[Proposition~6.6]{cw:qrat}). By~\cite[Proposition~4.7, Equation~(4.64)]{cw:qrat}, one obtains that
   $$\ccd_\Q(G) =\max\{n\geq 0 \mid \dH^n(G, C_c(G, \Q))\neq 0\},$$
        which, by Lemma~\ref{lem:coho}, equals $\max\{n\geq 0 \mid \Hc^n(|\Delta_{\mathrm{Dav}}|, \Q)\neq 0\}.$
    By Lemma~\ref{lem:cd}, $\ccd_\Q(G)=\ccd_\Q(W)$.
    The last assertion of the statement is now a consequence of~\cite[Proposition~3.7(a)]{cw:qrat}.
     \end{proof}
\begin{cor}\label{cor:ihara}
    Let $G$ be a unimodular t.d.l.c.~group acting on a locally finite building $\Delta$ of type $(W,S)$ with compact stabilisers and finitely many orbits. If $W$ is infinite and virtually free, then $G$ is non-compact and has a finitely generated free
subgroup that is cocompact and discrete.
\end{cor}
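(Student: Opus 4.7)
The non-compactness statement is immediate from Theorem~\hyperref[thmA]{A}. Since $W$ is infinite and virtually free, $\ccd_\Q(W)=\vcd(W)=1$, hence $\ccd_\Q(G)=\ccd_\Q(W)=1>0$, which forces $G$ to be non-compact.

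For the existence of a finitely generated free cocompact discrete subgroup, the plan is to realise $G$ as acting nicely on a locally finite tree and then invoke the theory of uniform tree lattices. By Theorem~\hyperref[thmH]{H}, $(W,S)$ admits a completely spherical visual graph of groups decomposition $(\caG,\Lambda)$; Theorem~\hyperref[thmF]{F} then asserts that $\euT:=\Gamma_{(\caG,\Lambda)}^\Delta(\caC)$ is a tree. Each vertex of $\euT$ corresponds to a spherical residue of $\Delta$, which by local finiteness is a \emph{finite} set of chambers. Consequently, the setwise $G$-stabiliser of every vertex of $\euT$ is a compact open subgroup of $G$. Moreover, the assumption that $G$ has finitely many orbits on $\caC$ propagates to $\euT$, so $G\backslash\euT$ is finite.

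Now $G$ is unimodular and acts cocompactly with compact open vertex stabilisers on the locally finite tree $\euT$, so the Bass--Kulkarni theory of uniform tree lattices produces a cocompact discrete subgroup $\Gamma\leq G$ (unimodularity being exactly the hypothesis needed). The induced $\Gamma$-action on $\euT$ has finite vertex stabilisers, so by Bass--Serre theory $\Gamma$ is isomorphic to the fundamental group of a finite graph of finite groups; in particular $\Gamma$ is virtually torsion-free, and one may pick a torsion-free finite-index subgroup $\Gamma_0\leq\Gamma$, which remains cocompact and discrete in $G$.

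Finally, $\Gamma_0$ is a closed subgroup of $G$, so~\cite[Proposition~3.7]{cw:qrat} gives $\ccd_\Q(\Gamma_0)\leq\ccd_\Q(G)=1$; since $\Gamma_0$ is torsion-free, the combination of~\cite[Theorem~1.1]{dun:acc} and~\cite[\S I.3.3, Theorem~4]{ser:trees} forces $\Gamma_0$ to be free, while cocompactness of the $\Gamma_0$-action on the locally finite tree $\euT$ supplies finite generation. The main obstacle in this plan is the Bass--Kulkarni uniform lattice construction together with the refinement to a torsion-free subgroup; once these are in hand, the cohomological dimension input of Theorem~\hyperref[thmA]{A} delivers freeness with no further work.
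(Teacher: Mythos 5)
Your argument is correct in outline, but it reaches the tree by a genuinely different road than the paper. The paper's proof is purely cohomological: Theorem~\ref{thm:ihbuil} gives $\ccd_\Q(G)=\ccd_\Q(W)=1$, compact presentability of $G$ (from the cocompact action on $|\DeltaDav|$) lets one invoke the t.d.l.c.\ analogue of the Stallings--Swan theorem \cite[Theorem~B]{cast:cd1} to split $G$ as the fundamental group of a finite graph of profinite groups, and then unimodularity plus \cite[Theorem~16]{km:cayley} produces the finitely generated free uniform lattice in one stroke. You instead build the tree \emph{geometrically}: Theorem~\ref{thm:kps} supplies a completely spherical visual decomposition $(\caG,\Lambda)$ of $(W,S)$, Theorem~\ref{thm:hpgen} turns it into the tree $\Gamma_{(\caG,\Lambda)}^\Delta(\caC)$, and the sphericity of the vertex-groups guarantees (via local finiteness of $\Delta$) that the vertex-residues are finite, hence that the tree is locally finite and the setwise stabilisers are compact open; finitely many $G$-orbits on $\caC$ then give a cocompact action. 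This buys an explicit tree made of residues of $\Delta$ (and bypasses the accessibility theorem of \cite{cast:cd1} entirely), at the cost of invoking the heavier combinatorial machinery of Sections~4 and~5; the paper's route is shorter and needs only $\ccd_\Q(G)=1$. Your final passage to freeness (torsion-free finite-index subgroup, then Dunwoody plus Stallings--Swan) is also more roundabout than necessary: a torsion-free group acting on a tree with finite stabilisers acts freely, so Bass--Serre theory gives freeness directly.

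One point deserves more care than your sketch gives it. The classical Bass--Kulkarni theorem concerns closed cocompact subgroups of $\Aut(X)$, whereas your $G$ acts on $\euT=\Gamma_{(\caG,\Lambda)}^\Delta(\caC)$ through a homomorphism whose kernel is a compact (possibly infinite) normal subgroup; a uniform lattice in the image does \emph{not} pull back to a discrete subgroup of $G$ (its preimage is only compact-by-discrete). What you actually need is the version of the existence theorem for unimodular t.d.l.c.\ groups acting on locally finite trees with compact open stabilisers and finite quotient --- which is precisely \cite[Theorem~16]{km:cayley}, the reference the paper uses. So the two proofs ultimately converge on the same black box; only the construction of the tree action differs.
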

 \begin{proof}
   By~\cite[Corollary~1.2]{dun:acc}, $\ccd_\Q(W)=1$. Hence, by Theorem~\ref{thm:ihbuil}, $\ccd_\Q(G)=1$ as well. Moreover $G$ acts on $\DeltaDav$ with compact open stabilisers and finitely many orbits and so $G$ is compactly presented (cf.~\cite[Theorem~4.9]{ccc}).
   Thus $G$ is the fundamental group of a finite graph of profinite groups (cf.~\cite[Theorem~B]{cast:cd1}). As $G$ is unimodular, \cite[Theorem~16]{km:cayley} yields the claim.
 \end{proof}
Combining Lemma~\ref{lem:coho} and Corollary~\ref{cor:end3}, one deduces what follows.
\begin{cor}\label{cor:eDelta}
Let $\Delta$ be a locally finite building and let $G$ be a t.d.l.c.~group acting on $\Delta$ with compact stabilisers and finitely many orbits. Then
\begin{equation}\label{eq:eG2}
        e(|\DeltaDav|)=e(G)=1-\dim_\Q \Hc^0(|\Delta_{\mathrm{Dav}}|, \Q)+\dim_\Q \Hc^1(|\Delta_{\mathrm{Dav}}|,\Q),
    \end{equation}
 where $ e(|\DeltaDav|)$ is defined as in~\cite[Appendix~G.3]{dav:book}. In particular, if $G$ is non-compact, then
 $e(G)=1+\dim_\Q \Hc^1(|\Delta_{\mathrm{Dav}}|,\Q)$.
\end{cor}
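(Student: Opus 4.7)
The corollary is announced in the excerpt as a direct combination of Lemma~\ref{lem:coho} and Corollary~\ref{cor:end3}, and my plan reflects exactly that. First I would verify that $G$ is compactly generated: since the hypotheses of Lemma~\ref{lem:coho} are satisfied, $G$ acts on the locally finite, contractible CW-complex $|\DeltaDav|$ with compact open stabilisers and finitely many orbits, and~\cite[Theorem~4.9]{ccc} then yields compact generation. Applying Corollary~\ref{cor:end3} to $G$ gives
\begin{equation*}
e(G) = 1 - \dim_\Q \dH^0(G,\BiG) + \dim_\Q \dH^1(G,\BiG).
\end{equation*}

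Next I would use the non-canonical $\Q[G]$-module isomorphism $\BiG \simeq C_c(G,\Q)$ recalled in the statement of Theorem~\hyperref[thmJ]{J} to identify $\dH^k(G,\BiG)$ with $\dH^k(G,C_c(G,\Q))$ for each $k \geq 0$. Invoking Lemma~\ref{lem:coho}, whose hypotheses are precisely those of the present corollary, replaces the right-hand side with $\Hc^k(|\DeltaDav|,\Q)$; substituting back for $k=0,1$ produces the claimed cohomological expression for $e(G)$.

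For the equality $e(|\DeltaDav|) = e(G)$, I would appeal to the fact that $|\DeltaDav|$ is a locally finite, contractible CW-complex on which $G$ acts properly and cocompactly (cf.~\cite[Lemma~5.13]{kramer}). Consequently, $G$ is quasi-isometric to the $1$-skeleton of $|\DeltaDav|$, and the number of ends is a quasi-isometry invariant, so $e(G)=e(|\DeltaDav|)$. Alternatively, Davis' definition of $e(Y)$ from~\cite[Appendix~G.3]{dav:book} can be shown to coincide with $1 - \dim_\Q\Hc^0(Y,\Q) + \dim_\Q\Hc^1(Y,\Q)$ for a connected locally finite CW-complex $Y$ by the short exact sequence relating compactly supported cohomology to the reduced cohomology of the end space, which gives the same equality directly.

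Finally, for the ``In particular'' clause, if $G$ is non-compact then cocompactness of the action forces $|\DeltaDav|$ to be non-compact; being contractible, $|\DeltaDav|$ is connected, and for any connected non-compact locally finite CW-complex one has $\Hc^0(|\DeltaDav|,\Q)=0$, from which the simplified formula follows. The only mildly delicate step is the identification of Davis' definition of $e(|\DeltaDav|)$ with the cohomological formula; once granted, everything else is a mechanical chain of isomorphisms built out of results already proved.
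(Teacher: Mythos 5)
Your proposal is correct and follows essentially the same route as the paper: combine Lemma~\ref{lem:coho} with the isomorphism $\BiG\simeq C_c(G,\Q)$ and Corollary~\ref{cor:end3} to get the cohomological formula, then identify $e(G)$ with $e(|\DeltaDav|)$ via the geometric (proper, cocompact) action on the proper geodesic space $|\DeltaDav|$ and quasi-isometry invariance of ends (the paper cites~\cite[Theorem~4.C.5]{CH} for exactly this). Your extra care in checking compact generation and in handling the vanishing of $\Hc^0$ in the non-compact case is sound but does not change the argument.
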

\begin{proof} By Lemma~\ref{lem:coho} (together with the isomorphism $\BiG\to C_c(G,\Q)$ in~\cite[Equation~(4.64)]{cw:qrat}) and Corollary~\ref{cor:end3}, one has 
$$e(G)=1-\dim_\Q \Hc^0(|\Delta_{\mathrm{Dav}}|, \Q)+\dim_\Q \Hc^1(|\Delta_{\mathrm{Dav}}|,\Q)$$
and, if $G$ is non-compact, $e(G)=1+\dim_\Q \Hc^1(|\Delta_{\mathrm{Dav}}|,\Q)$.
    It suffices now to observe that $G$ acts geometrically on the proper geodesic space $|\Delta_{\mathrm{Dav}}|$ and then $e(G)=e(|\Delta_{\mathrm{Dav}}|)$ (cf.~\cite[Theorem~4.C.5]{CH}).
\end{proof}

\begin{rem}\label{rem:eG0}
    Let $G$ be a t.d.l.c.~group acting on a locally finite building $\Delta$ of type $(W,S)$ with compact stabilisers and finitely many orbits. By Theorem~\ref{thm:ihbuil} and \cite[Proposition~3.7(a)]{cw:qrat}, one has
    \begin{eqnarray*}
        e(G)=0 &\Longleftrightarrow & G \text{ compact }\Longleftrightarrow 
        \ccd_\Q(G)=0 \\
        &\Longleftrightarrow &\ccd_\Q(W)=0 \Longleftrightarrow W\text{ finite }\Longleftrightarrow e(W)=0.
    \end{eqnarray*}
\end{rem}
The following proposition provides an alternative proof of~\cite[Proposition~5.15]{cmr:KMgrps}, and it is based on a result of~\cite{compsup}.
\begin{prop}\label{prop:ecd}
  Let $G$ be a non-compact t.d.l.c.~group acting on a locally finite building $\Delta$ of type $(W,S)$ with compact stabilisers and finitely many orbits.
  Then $e(W)\leq e(G)$ and \begin{equation}\label{eq:eG=1}e(W)=1\Longleftrightarrow e(G)=1.\end{equation}
\end{prop}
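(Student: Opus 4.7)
The plan is to express both $e(G)$ and $e(W)$ as compactly supported rational cohomology of suitable Davis realisations, and then compare them via the explicit formulas of~\cite{compsup}.

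First, since $G$ is non-compact, Theorem~\ref{thm:ihbuil} gives that $W$ is infinite, and hence $e(G),e(W)\in\{1,2,\infty\}$. The Coxeter complex $\Sigma(W,S)$ is itself a locally finite (thin) building of type $(W,S)$ on which $W$ acts simply transitively on chambers, with trivial (in particular compact) stabilisers and one orbit. Hence Corollary~\ref{cor:eDelta} applies both to the action of $G$ on $\Delta$ and to the action of $W$ on $\Sigma(W,S)$, yielding
\[
e(G)=1+\dim_\Q \Hc^1(|\Delta_{\mathrm{Dav}}|,\Q)
\quad\text{and}\quad
e(W)=1+\dim_\Q \Hc^1(|\Sigma(W,S)_{\mathrm{Dav}}|,\Q).
\]

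Next I would invoke the main cohomological computation of~\cite{compsup}, which decomposes $\Hc^\bullet(|\Delta_{\mathrm{Dav}}|,\Q)$ as a direct sum, indexed by the spherical subsets $T\subseteq S$, of tensor products of the form $H^\bullet(K,K^{S\setminus T};\Q)\otimes_\Q \hat A^T(\Delta)$. The first factor records the cohomology of certain subcomplexes of the fundamental chamber $K$ of $|\Sigma(W,S)_{\mathrm{Dav}}|$ and therefore depends only on $(W,S)$, while the second factor is a multiplicity space built from the $T$-residues of $\Delta$. Specialising the same formula to $\Sigma(W,S)$ yields an analogous decomposition in which the Coxeter-theoretic factors are identical, while the multiplicity spaces $\hat A^T(\Sigma(W,S))$ are smaller, embedding naturally into $\hat A^T(\Delta)$ as the contribution of a single apartment. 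Crucially, for every spherical $T$ both $\hat A^T(\Sigma(W,S))$ and $\hat A^T(\Delta)$ are nonzero (for instance, the longest element of $W_T$, respectively a representative of a $T$-residue in $\Delta$, produces a canonical nonzero class).

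Restricting to degree $n=1$, this will yield the vanishing equivalence
\[
\Hc^1(|\Sigma(W,S)_{\mathrm{Dav}}|,\Q)=0\ \Longleftrightarrow\ \Hc^1(|\Delta_{\mathrm{Dav}}|,\Q)=0,
\]
both being equivalent to the condition that $H^1(K,K^{S\setminus T};\Q)=0$ for every spherical $T\subseteq S$, together with the dimension inequality $\dim_\Q \Hc^1(|\Sigma(W,S)_{\mathrm{Dav}}|,\Q)\leq\dim_\Q \Hc^1(|\Delta_{\mathrm{Dav}}|,\Q)$. Via the two expressions for $e(G)$ and $e(W)$ displayed above, these will translate immediately into $e(W)=1\Leftrightarrow e(G)=1$ and $e(W)\leq e(G)$, respectively. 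The main obstacle I anticipate is the bookkeeping needed to identify $\hat A^T(\Sigma(W,S))$ and $\hat A^T(\Delta)$ precisely enough to verify the embedding and the nonvanishing claim; once these structural facts are extracted from~\cite{compsup}, the rest is a formal consequence of Corollary~\ref{cor:eDelta}.
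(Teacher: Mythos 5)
Your expression of both $e(G)$ and $e(W)$ via Corollary~\ref{cor:eDelta} (applying it to the action of the discrete group $W$ on its own Coxeter complex $\Sigma(W,S)$, which is indeed a locally finite thin building with trivial stabilisers) and your proof of the equivalence $e(W)=1\Leftrightarrow e(G)=1$ coincide with the paper's argument: the paper likewise reduces to the vanishing statement $\Hc^1(|\Delta_{\mathrm{Dav}}|,\Q)=0\Leftrightarrow\Hc^1(|\Sigma(W,S)_{\mathrm{Dav}}|,\Q)=0$ and derives it from the decomposition of \cite{compsup}, using only that the factors $\mathrm{H}^1(K,K^{S\setminus T};\Q)$ depend solely on $(W,S)$ and that the multiplicity groups $\widehat{A}^T(\Delta)$ and $\widehat{A}^T(\Sigma(W,S))$ are non-vanishing for every spherical $T$. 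Where you diverge is the inequality $e(W)\leq e(G)$: you propose to extract it from a natural embedding $\widehat{A}^T(\Sigma(W,S))\hookrightarrow\widehat{A}^T(\Delta)$ compatible with the direct sum decomposition, whereas the paper uses the continuous retraction $\rho\colon|\Delta_{\mathrm{Dav}}|\to|\Sigma(W,S)_{\mathrm{Dav}}|$ onto an apartment (cf.~\cite[p.~338]{dav:book}), whose induced maps $\rho_k^*$ on compactly supported cohomology are surjective because $\rho$ admits a continuous right inverse; the dimension inequality is then immediate from functoriality alone.

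The one genuine gap is exactly the step you flag yourself: the claimed embedding of multiplicity spaces is not a formal consequence of the statement of the main theorem of \cite{compsup}, and verifying that the inclusion of an apartment induces an injection on each $\widehat{A}^T$ that is moreover compatible with the direct sum decomposition of $\Hc^1$ would require unpacking the construction of these groups (they are built from completions of free abelian groups on residues, and compatibility of the decomposition with maps of buildings is not asserted there). Until that is done, your proof of $e(W)\leq e(G)$ is incomplete. The retraction argument buys you this inequality with no bookkeeping at all, so I would recommend substituting it for the embedding claim and keeping your (correct) \cite{compsup}-based argument only for the equivalence, which is precisely the division of labour in the paper's proof.
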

\begin{proof}
According to~\cite[p.~338]{dav:book}, there is a continuous map $\rho\colon|\DeltaDav|\to |\Sigma(W,S)_{\mathrm{Dav}}|$ which is a retraction (i.e., it admits a right-inverse continuous map). By functoriality, $\rho$ induces the surjective linear maps 
\begin{equation}\label{eq:epi2}
\rho_k^*\colon\Hc^k(|\Delta_{\mathrm{Dav}}|, \Q)\longrightarrow \Hc^k(|\Sigma(W,S)_{\mathrm{Dav}}|,\Q),\quad k\geq 0.
   \end{equation}
  By Corollary~\ref{cor:eDelta} and~\eqref{eq:epi2}, one deduces that 
  $$e(W)\leq 1+\dim_\Q\Hc^1(|\Sigma(W,S)_{\mathrm{{Dav}}}|, \Q)\leq 1+\dim_\Q\Hc^1(|\DeltaDav|,\Q)=e(G).$$
In order to prove the equivalence, one first notices that $e(G)=1+\dim_\Q\Hc^1(|\DeltaDav|, \Q)$ and $e(W)=1+\dim_\Q\Hc^1(|\Sigma(W,S)_{\mathrm{Dav}}|,\Q)$ (cf.~ Corollary~\ref{cor:eDelta} and Remark~\ref{rem:eG0}). Hence, \eqref{eq:eG=1} holds once the following is proved: 
  \begin{equation}\label{eq:vanish}
     \Hc^1(|\Delta_{\mathrm{Dav}}|, \Q)=0 \Longleftrightarrow \Hc^1(|\Sigma(W,S)_{\mathrm{Dav}}|, \Q)=0.
     \end{equation}
 In fact~\eqref{eq:vanish} holds for every degree $k\geq 0$. Indeed, by the main theorem of~\cite{compsup} one has
     \begin{equation}\label{eq:c3}
     \Hc^k(|\Delta_{\mathrm{Dav}}|, \Q)\simeq \bigoplus_{T\subseteq S, \atop T\text{ spherical}}\mathrm{H}^k(K, K^{S\setminus T})\otimes \widehat{A}^T(\Delta)
     \end{equation}
     and 
      \begin{equation}\label{eq:c4}
        \Hc^k(|\Sigma(W,S)_{\mathrm{Dav}}|, \Q)\simeq \bigoplus_{T\subseteq S, \atop T\text{ spherical}}\mathrm{H}^k(K, K^{S\setminus T})\otimes \widehat{A}^T(\Sigma(W,S)),
     \end{equation}
     for every $k\geq 0$. The Davis chamber $K$ depends only on the Coxeter group $(W,S)$ and not on the building. By~\cite[Remark at p.~570 and Definition~7.4]{compsup}, the abelian groups $\widehat{A}^T(\Delta)$ and $\widehat{A}^T(\Sigma(W,S))$ are non-vanishing for every spherical subset $T\subseteq S$.
Therefore, $\Hc^k(|\Delta_{\mathrm{Dav}}|, \Q)=0$ if, and only if, $\mathrm{H}^k(K, K^{S-T})=0$ for every $T\subseteq S$ spherical (cf.~\eqref{eq:c3}). By~\eqref{eq:c4}, an analogous equivalence holds for $\Hc^k(|\Sigma(W,S)_{\mathrm{Dav}}|, \Q)$. 
\end{proof}

\begin{rem}\label{rem:no=}
   In Proposition~\ref{prop:ecd} the equality between $e(W)$ and $e(G)$ does not hold in general. For example, consider $G=\SL_2(\Q_p)$ acting on its Bruhat--Tits building $\Delta$. 
   It is well-known that $\Delta$ is a $(p+1)$-regular tree and, by~Corollary~\ref{cor:eDelta}, $e(\SL_2(\Q_p))=\infty$.
   On the other hand, the Weyl group associated to $\Delta$ is the infinite dihedral group, which is two-ended.
\end{rem}


\subsection{A relation between the flat rank and $\ccd_\Q(G)$}\label{s:flatrk}
This section deals with the connection of two relevant invariants associated to a t.d.l.c.~group $G$, its cohomological dimension $\ccd_\Q(G)$ and its flat rank $\flatrk(G)$ (cf.~\cite[Section~1.3]{brw:build}).
Following~\cite{cw:qrat}, a t.d.l.c.~group $G$ is said to be \textit{$N$-compact} if, for every compact open subgroup $\caO$ of $G$, the normalizer $N_G(\caO)$ is compact. By~\cite[Proposition~3.10(b)]{cw:qrat}, for such groups one has
    \begin{equation}\label{eq:ncompact}
        \flatrk(G)\leq\ccd_\Q(G).
    \end{equation}
The result below shows that, given a locally finite thick building $\Delta$, every t.d.l.c.~group acting  Weyl-transitively on $\Delta$ with compact open stabilisers is $N$-compact.
As a consequence of \eqref{eq:ncompact}, one  deduces that for these t.d.l.c.~groups one has $\flatrk(G)\leq\ccd_\Q(G)$. 
\begin{prop}\label{prop:bcg}
Let $G$ be a t.d.l.c.~group acting Weyl-transitively on a locally finite thick building $\Delta$ with compact stabilisers. Then $G$ has the double coset property and, in particular, is $N$-compact.
     \end{prop}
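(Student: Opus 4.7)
The strategy is to exploit the BN-pair that Weyl-transitivity provides on $G$, use the Bruhat decomposition to obtain the double coset property for the chamber stabiliser, transfer it to arbitrary compact open subgroups by commensurability, and then conclude $N$-compactness via a fixed-point argument on the Davis realisation $|\DeltaDav|$.

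First I would fix a chamber $c\in\caC$ and set $B:=\stab_G(c)$, which is compact open by hypothesis. Weyl-transitivity combined with Fact~\ref{fact:Wbuild}(ii) endows $G$ with a BN-pair $(B,N)$, where $N$ is the setwise stabiliser of an apartment through $c$; in particular one has the Bruhat decomposition $G=\bigsqcup_{w\in W}BwB$. For each $w\in W$ there is a natural bijection $BwB/B\leftrightarrow\{d\in\caC:\delta(c,d)=w\}$, and the right-hand side is finite by local finiteness of $\Delta$. Hence $[B:B\cap gBg^{-1}]<\infty$ for every $g\in G$. To extend this to an arbitrary compact open subgroup $\caO\leq G$, I would invoke the commensurability of compact open subgroups in a t.d.l.c.~group: for any two compact opens $\caO_1,\caO_2$, the intersection $\caO_1\cap\caO_2$ is open in each compact $\caO_i$ and hence of finite index. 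Applied to $\caO$ and $gBg^{-1}$ this yields $[\caO:\caO\cap gBg^{-1}]<\infty$, and combined with the previous bound gives $[\caO:\caO\cap g\caO g^{-1}]<\infty$ for every $g\in G$; this is the double coset property.

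For the $N$-compactness assertion I would use that $|\DeltaDav|$ is contractible and carries a natural $\mathrm{CAT}(0)$ metric on which $\caO$ acts properly with compact open cell stabilisers; hence $\caO$ has a non-empty closed convex fixed set $F_\caO\subseteq|\DeltaDav|$. The key technical claim is that $F_\caO$ is bounded. Granting this claim, $N_G(\caO)$ preserves $F_\caO$ and therefore fixes its unique circumcentre $z$, so that $N_G(\caO)\leq\stab_G(z)$, which is compact open by local finiteness, and since $N_G(\caO)$ is closed it is itself compact.

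The main obstacle is the boundedness of $F_\caO$: commensurability with $B$ alone does not suffice, because passing to a smaller subgroup generally \emph{enlarges} its fixed set. Thickness should enter here to exclude $F_\caO$ extending along an infinite wall or along a non-spherical residue, whereas Weyl-transitivity provides the combinatorial uniformity needed to reduce to the (finite) $\caO$-orbit of $c$, yielding the bound on $F_\caO$ required for the circumcentre argument.
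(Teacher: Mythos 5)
Your proof establishes the wrong statement. The double coset property, as defined in the paper (following~\cite[\S 6]{ccw:zeta}) and as used in this proposition, is \emph{not} the assertion that $|\caO:\caO\cap g\caO g^{-1}|<\infty$ for every $g\in G$ --- that is automatic in any t.d.l.c.~group, since any two compact open subgroups are commensurable, exactly as you observe. The double coset property is the much stronger statement that for every $n\geq 1$ the number $R_\caO(n)$ of double cosets $\caO g\caO$ with $\mu_\caO(\caO g\caO)=n$ is \emph{finite}, i.e., for each fixed $n$ only finitely many double cosets realise the index $n$. Your argument gives a finite bound for each individual $g$ but says nothing about how many $w\in W$ realise a given value of $|B:B\cap wBw^{-1}|$; since $W$ is infinite, this count is the entire content of the proposition. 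The paper's proof supplies exactly this: writing $q_w=|B:B\cap wBw^{-1}|=q_{s_1}\cdots q_{s_n}$ for a reduced word $w=s_1\cdots s_n$ (this is where Weyl-transitivity and~\cite[Proposition~4]{brw:build} enter), thickness gives $q_s\geq 2$ for all $s\in S$, so $\sum_{w\in W}q_w^{-k}\leq\sum_{w\in W}2^{-k\ell(w)}=P_{(W,S)}(2^{-k})<\infty$ for $k\gg 1$, and comparing with the subseries over $\{w\mid q_w=n\}$ forces $|\{w\mid q_w=n\}|\cdot n^{-k}<\infty$, i.e., $R_B(n)<\infty$. Note that thickness is used precisely here, to make the Poincar\'e series converge, and not where you propose to use it.

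Your route to $N$-compactness is likewise not needed and, as you yourself acknowledge, incomplete: the boundedness of the fixed-point set $F_\caO$ in $|\DeltaDav|$ is precisely the hard point, and you do not prove it (for a thin building it is false, so genuine input is unavoidable there). In the paper, $N$-compactness is an immediate formal consequence of the double coset property: if $g$ normalises $\caO$ then $\caO g\caO=g\caO$ has measure $1$, so $|N_G(\caO):\caO|\leq R_\caO(1)<\infty$, and $N_G(\caO)$, being closed and a finite union of cosets of the compact group $\caO$, is compact. No geometry is required once the counting statement is in hand.
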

     Before proving the result, for the convenience of the reader we recall some terminology and notations.
     Let $\caO\leq G$ be a compact open subgroup of a t.d.l.c.~group. If $G$ is unimodular, one observes that
     \begin{equation}
         |N_G(\caO):\caO|=|\{\caO g\caO\in \caO\backslash G/\caO\mid \mu_\caO(\caO g\caO)=1\}|,
     \end{equation}
    where $\mu_{\caO}$ denotes the Haar measure of $G$ such that $\mu_\caO(\caO)=1$. Indeed, one has $$|\caO:\caO\cap g\caO g^{-1}|=\mu_\caO(\caO g\caO)=\mu_\caO(\caO g^{-1}\caO)=|\caO:\caO\cap g^{-1}\caO g|.$$ In other words, for unimodular t.d.l.c.~groups, the index $|N_G(\caO):\caO|$ coincides with $R_\caO(1)$, where $R_\caO(n):=|\{\caO g\caO\in \caO\backslash G/\caO\mid \mu_\caO(\caO g\caO)=n\}|.$
     Following~\cite[\S 6]{ccw:zeta}, a t.d.l.c.~group $G$ is said to have the {\em double coset property} if $R_\caO(n)<\infty$ for every $n\geq 1$. According to~\cite[Proposition~6.2]{ccw:zeta}, this property does not depend on $\caO$.
     \begin{proof} By~\cite[Proposition~6.2]{ccw:zeta}, it suffices to show that $G$ has the double coset property for the stabiliser $\caO$ of a chamber in $\Delta$.
        By Weyl-transitivity, there is a bijection between $\caO\backslash G/\caO$ and $W$ and then one may write $G=\bigsqcup_{w\in W}\caO w\caO$ (cf.~\cite[\S 6.1.4]{ab:build}). 
        {For $s\in S$, let $q_s=|\caO:\caO\cap s\caO s^{-1}|$.} By~\cite[Proposition~4]{brw:build}, for every reduced word $w=s_1\cdots s_n$ in $(W,S)$ one has
    $$\mu_\caO(\caO w\caO)=|\caO:\caO\cap w\caO w^{-1}|=q_{s_1}\cdots q_{s_n}=:q_w.$$
    Since $q_s\geq 2$ for every $s\in S$, there is $k\geq 1$ such that
    \begin{equation}\label{eq:convWS}
        \sum_{w\in W}q_w^{-k}\leq \sum_{w\in W}2^{-k\ell(w)}<\infty,
    \end{equation}
    where $\ell$ is the word length of $(W,S)$. Indeed, the series on the right-hand side of~\eqref{eq:convWS} is the Poincaré series $P_{(W,S)}(t)$ of $(W,S)$ evaluated in $t=2^{-k}$  which converges for all $k\gg 1$.
        Since $q_w^{-k}>0$ for all $w\in W$, for every $n\geq 1$ one deduces that
      $$\infty>\sum_{w\in W}q_w^{-k}\geq \sum_{w\in W:\atop q_w=n}n^{-k}=|\{w\in W\mid q_w=n\}|\cdot n^{-k}.$$ 
      As a conclusion, the cardinality $\caR_\caO(n)=|\{w\in W\mid q_w=n\}|$ is finite for every $n\geq 1$. 
     \end{proof}
     For Weyl-transitive closed subgroups of {$\Aut_0(\Delta)$,} one can provide an alternative proof of \eqref{eq:ncompact} based on the relation between the invariants of $G$ and those of $W$. Recall that the \textit{algebraic rank}  $\algrk(W)$ of $W$ is the maximal $\Z$-rank of  free abelian subgroups of $W$.  In~\cite[Theorem~6.8.3]{krammer}, D.~Krammer proved that its computation can be reduced to the study of combinatorial properties of the associated Coxeter diagram. 
     \begin{thm}\label{thm:fl-cd}
        Let $\Delta$ be a locally finite building and $G$ a Weyl-transitive closed subgroup of $\Aut_0(\Delta)$. Then
        $$\flatrk(G)\leq\ccd_{\Q}(W)=\ccd_\Q(G).$$
     \end{thm}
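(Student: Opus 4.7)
The plan is to establish the chain of comparisons
\[\flatrk(G) \;\leq\; \algrk(W) \;\leq\; \ccd_\Q(W) \;=\; \ccd_\Q(G),\]
where the first step is due to Baumgartner--Rémy--Willis and Caprace--Haglund, the middle step is subgroup monotonicity of rational cohomological dimension, and the final equality is furnished by Theorem~\hyperref[thmA]{A}.

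First I would verify that the hypotheses of Theorem~\hyperref[thmA]{A} are met. Since $\Delta$ is locally finite, every chamber stabiliser in $\Aut_0(\Delta)$ is profinite (cf.~\cite[Theorem~5.1]{kramer}), hence compact and open, and this persists upon intersecting with the closed subgroup $G$. Weyl-transitivity entails chamber-transitivity, so $G$ has a single orbit on $\caC$ and, a fortiori, finitely many orbits on each skeleton of $|\DeltaDav|$. Theorem~\hyperref[thmA]{A} then yields $\ccd_\Q(G) = \ccd_\Q(W)$.

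For the first inequality I would simply invoke \cite[Theorem~17]{brw:build} (or, equivalently, \cite[Corollary~C]{caphag}): both results give $\flatrk(G) \leq \algrk(W)$ in exactly the Weyl-transitive setting considered here, so no further work is needed. For the middle inequality, I would pick a free abelian subgroup $A \leq W$ realising the maximum rank $n = \algrk(W)$. Since $A \simeq \Z^n$ is torsion-free, one has $\ccd_\Q(A) = n$, and subgroup monotonicity of rational cohomological dimension (cf.~\cite[VIII.2.4]{br:coho}) gives $n \leq \ccd_\Q(W)$.

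The proof is essentially assembly; the substantive content is concentrated in Theorem~\hyperref[thmA]{A} and in the cited flat-rank versus algebraic-rank comparisons. The only delicate point is to confirm that the Weyl-transitivity hypothesis of the statement matches the transitivity hypotheses in \cite{brw:build, caphag}, but this is precisely their intended level of generality, so no real obstacle arises.
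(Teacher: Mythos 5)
Your proposal is correct and follows essentially the same route as the paper: the chain $\flatrk(G)\leq\algrk(W)\leq\ccd_\Q(W)=\ccd_\Q(G)$, with the first inequality from Baumgartner--R\'emy--Willis and Caprace--Haglund, the second from subgroup monotonicity applied to a maximal free abelian subgroup, and the equality from Theorem~\ref{thm:ihbuil}. The only cosmetic discrepancy is the citation for the first step (the paper uses \cite[Theorem~A]{brw:build} for the general inequality, reserving \cite[Theorem~17]{brw:build} for the equality in the Kac--Moody case), which does not affect the argument.
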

     \begin{proof}
         Combining~\cite[Theorem~A]{brw:build} and~\cite[Corollary~C]{caphag}, one deduces that $$\flatrk(G)\leq\algrk(W).$$
    Then $\algrk(W)\leq\ccd_{\Q}(W)$ and Theorem~\ref{thm:ihbuil} yields the claim.
     \end{proof}
    Let $\mathbb K$ be a non-Archimedean local field. Let $\boG$ be a semisimple, simply connected algebraic group defined over $\mathbb K$ with affine Weyl group $W$. If $\boG(\mathbb K)$ denotes the group
of $\mathbb K$-rational points {of $\boG$}, then 
\begin{equation}\label{eq:alggrps}
    \algrk(W)=\flatrk(\boG(\mathbb K)) = \ccd_\Q(\boG(\mathbb K))=\ccd_\Q(W)
\end{equation}
(cf.~\cite[Corollary~18]{brw:build} and~\cite[Example~3.11]{cw:qrat}). 
\begin{rem}
For $p$ prime, $(\mathrm{PSL}_{d+1}(\Q_p))_{d\geq 1}$ provides a sequence of topologically
simple compactly generated t.d.l.c.~groups having arbitrary rational discrete cohomological dimension $d$. 
\end{rem}
Let $G$ be a group with a locally finite twin root datum of type $(W,S)$, and denote by $(B_{\pm},N,S)$ the two associated BN-pairs (cf.~\cite[\S 1.2]{brw:build}). Let $\Delta_+$ be the locally finite building defined by $(B_+,N,S)$, and
denote by $\bar G$ the closure of the image of $G$ in $\Aut_0(\Delta_+)$.
The topological group $\bar G$ is called {\em geometric completion of $G$} (cf.~\cite[p.~187]{marq:km}). 
For example, if $G$ is an abstract Kac--Moody group defined over a finite field, then $\bar G$ gives the so-called {\em complete Kac--Moody group} \cite{rr06}. For such a group one has $\algrk(W)=\flatrk(\bar G)$ as well 
 (cf.~\cite[Theorems~A~and~B]{brw:build} and~\cite[Corollary~C]{caphag}). 
     In particular, from Theorem~\ref{thm:ihbuil} and Theorem~\ref{thm:fl-cd} one deduces the following.
\begin{prop}\label{prop:algcd}
Let $G$ be a group with a locally finite twin root datum of type $(W,S)$, and denote by $\bar G$ the geometric completion of $G$. Then $\algrk(W)=\ccd_\Q(W)$ if, and only if, $\flatrk(\bar G)=\ccd_{\Q}(\bar G)$.
\end{prop}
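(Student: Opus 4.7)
The plan is to observe that Proposition~\ref{prop:algcd} is essentially a formal consequence of two substitutions, one for each side of the claimed equivalence, both of which have already been set up in the preceding discussion.

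First, I would verify that the hypotheses of Theorem~\ref{thm:ihbuil} apply to the geometric completion $\bar G$ acting on the locally finite building $\Delta_+$ attached to the BN-pair $(B_+,N,S)$. By construction, $\bar G$ is a closed subgroup of $\Aut_0(\Delta_+)$ and acts Weyl-transitively (in particular chamber-transitively, hence with finitely many orbits on chambers) with compact open chamber stabilisers, the latter being the closures of the images of the conjugates of $B_+$. Consequently Theorem~\ref{thm:ihbuil} gives
\begin{equation*}
\ccd_\Q(\bar G)=\ccd_\Q(W).
\end{equation*}

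Next, I would invoke the equality $\flatrk(\bar G)=\algrk(W)$, which the paragraph immediately preceding the proposition already records as following from~\cite[Theorems~A and~B]{brw:build} together with~\cite[Corollary~C]{caphag}. Combining these two identities yields
\begin{equation*}
\flatrk(\bar G)=\algrk(W)\qquad\text{and}\qquad \ccd_\Q(\bar G)=\ccd_\Q(W),
\end{equation*}
so the equality $\flatrk(\bar G)=\ccd_\Q(\bar G)$ is equivalent to $\algrk(W)=\ccd_\Q(W)$, as desired.

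There is no substantive obstacle beyond checking that Theorem~\ref{thm:ihbuil} is actually applicable to $\bar G$ on $\Delta_+$; this is the only point where one must spell out the standard facts about twin root data (compactness of chamber stabilisers in the permutation topology, Weyl-transitivity), but these are immediate from the definition of the geometric completion and the existence of the BN-pair $(B_+,N,S)$. Once those properties are invoked, the proof reduces to combining the two displayed identities.
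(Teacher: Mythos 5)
Your proposal is correct and follows the same route the paper intends: the paper derives the proposition precisely by combining $\flatrk(\bar G)=\algrk(W)$ (from \cite[Theorems~A~and~B]{brw:build} and \cite[Corollary~C]{caphag}) with $\ccd_\Q(\bar G)=\ccd_\Q(W)$ from Theorem~\ref{thm:ihbuil}. Your additional remark that one must check the hypotheses of Theorem~\ref{thm:ihbuil} for the action of $\bar G$ on $\Delta_+$ (compact open chamber stabilisers from local finiteness, finitely many orbits from Weyl-transitivity) is exactly the right verification and is handled correctly.
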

\begin{rem}
    For every integer $n \geq 1$  there exists a non-linear, topologically
simple, compactly generated, t.d.l.c.~group $\bar{G}_n$ of
flat rank $n$ (cf.~\cite[Theorem~25]{brw:build}). By Theorem~\ref{thm:ihbuil} we can compute $\ccd_\Q(\bar G_n)$ for every $n\geq 1$ as follows. Recall that the topological Kac--Moody groups $(\bar G_n)_{n\geq1}$ are constructed by using the following sequence of
connected Coxeter diagrams $(Y_n)_{n\in\N}$:
\begin{itemize}
    \item[-] let $Y_1$ be a cycle of length $5$ all of whose edges are labelled $\infty$,
    \item[-] if $n > 1$ let $Y_n$ be the diagram obtained by joining every vertex of $Y_1$ to
every vertex of a diagram of type $\tilde A_n$ by an edge labelled with $\infty$.
\end{itemize}
Denote by $(W_n,\Sigma_n)$ the Coxeter group associated to the Coxeter diagram $Y_n$. The Bestvina's complex $B_\Z(W_1,\Sigma_1)$ is 2-dimensional and so $\vcd(W_1)=\ccd_\Q(W_1)=2$. Moreover, for $n\geq 2$, as $W_n\cong W_1\ast\tilde A_n$ the Mayer--Vietoris sequence (cf.~\cite[Theorem~2]{chis76}) yields $\ccd_\Q(W_n)=\max\{\ccd_\Q(W_1),\ccd_\Q(\tilde A_n)\}=n$. Therefore, 
 $\ccd_\Q(\bar G_1)=2$ and, for all $n\geq 2$, $\ccd_\Q(\bar G_n)=n$.
\end{rem}
From Proposition~\ref{prop:algcd} the following question arises.
 \begin{ques}\label{q:algrkcdQ}
  For which Coxeter groups does $\algrk(W)=\ccd_\Q(W)$ hold?
\end{ques}
For instance, every affine Coxeter group $(W,S)$ has algebraic rank $|S|-1$ and so the equality $\algrk(W)=\ccd_\Q(W)$ holds.
The product $W_1\times W_2\times\cdots \times W_n=:\Gamma$ of finitely many Coxeter groups satisfies $\algrk(\Gamma)=\ccd_\Q(\Gamma)$ whenever $\algrk(W_i)=\ccd_{\Q}(W_i)$ for every $i=1,\ldots,n$ (cf.~\cite[Theorem~5.6]{bieri}). Another example is provided by M.~W.~Davis and T.~Januszkiewicz in~\cite{dj:rightangled}: for each right-angled Artin group $A(\Gamma)$ there is a right-angled Coxeter group $W_{\Gamma}$ which contains $A(\Gamma)$ as a subgroup of finite index. In particular, $\ccd_\Q(W_{\Gamma})=\ccd_\Q(A(\Gamma))$ and $\algrk(W_{\Gamma})=\algrk(A(\Gamma))$. {It suffices now to notice that $\algrk(A(\Gamma))$ coincides with the largest size of a clique of $\Gamma$, which is also the dimension of the associated Salvetti complex; see for example \cite[Theorems~3.4 and~3.5]{koberda}. In particular, $\algrk(W_\Gamma)=\ccd_\Q(W_\Gamma)=\vcd(W_\Gamma)$. 
It is well-known that there are Coxeter groups satisfying $\vcd(W)\neq\ccd_\Q(W)$ (cf.~\cite[Remarks~(3)]{bes:vcd}). Therefore, apart from Question~\ref{q:algrkcdQ}, also the following question arises. 
\begin{ques}\label{ques:algvcd}
    For which Coxeter groups $(W,S)$ does $\ccd_\Q(W)=\vcd(W)$ hold?
\end{ques}
A class of examples answering Question~\ref{ques:algvcd} affirmatively is given by Coxeter groups of virtual cohomological dimension $\leq 2$. 
In the following section we answer Question~\ref{q:algrkcdQ}~and~Question~\ref{ques:algvcd} for Coxeter groups of hyperbolic type.

\section{Coxeter groups and rational cohomological dimension}\label{S:cd_coxeter}
\subsection{Coxeter groups and associated diagrams}
\label{ss:coxgr}
A \emph{Coxeter matrix} $[m_{st}]_{s,t\in S}$ is a symmetric matrix with diagonal entries equal to $1$ and with non-diagonal entries contained in $\Z_{\geq 2}\cup\{\infty\}$. One can associate to $M$ the {\it Coxeter group} $(W,S)$, which is the group generated by $S$ subject to the relations $(st)^{m_{st}}=1$, whenever $m_{st}\neq\infty$. It turns out that $m_{s,t}$ coincides with the order of $st$ in $W$. 
 A {\em special subgroup} $W_J$ of $(W,S)$ is a subgroup generated by a subset $J$ of $S$. Notice that $(W_J,J)$ is a Coxeter group for every $J\subseteq S$ (cf.~\cite[Theorem~5.5(a)]{hum:ref}).
A Coxeter group $(W,S)$ is said to be \emph{spherical} if $W$ is finite. One says that $J\subseteq S$ is \textit{spherical} if $(W_J,J)$ is spherical. For further details the reader is referred to~\cite{hum:ref}.

Classically, to each Coxeter group $(W,S)$ one associates the so-called \emph{Coxeter diagram} $\Gamma(W,S)$, which is the edge-labelled undirected graph with set of vertices $S$, set of edges
$E(\Gamma(W,S))=\big\{\{s,t\} \mid s,t\in S,\, m_{st}\neq 2\big\}$ and edge-labelling $m\colon E(\Gamma(W,S))\to \Z_{\geq 3}\cup\{\infty\}$, $m(\{s,t\}):=m_{st}$. Usually, one omits the label of all edges $\{s,t\}$ in $\Gamma(W,S)$ satisfying $m_{st}=3$.
Sometimes, it is convenient to consider another graph $\Gamma_\infty(W,S)$ associated to $(W,S)$, the \emph{presentation diagram} of $(W,S)$ (cf.~\cite{mt:visualdec}). It is the edge-labelled undirected graph with vertex set $S$ and edge set 
$E(\Gamma_\infty(W,S))=\big\{\{s,t\} \mid s,t\in S,\, m_{st}\neq \infty\big\}$, in which every edge $\{s,t\}$  is labelled with $m_{st}$.
Each of the two edge-labelled undirected graphs, $\Gamma(W,S)$ and $\Gamma_\infty(W,S)$, determines uniquely the Coxeter matrix $[m_{st}]_{s,t\in S}$ and thus the Coxeter group $(W,S)$. 
\subsection{Virtual cohomological dimension of Coxeter groups}\label{sus:vcd}
Let $\Gamma$ be a virtually torsion-free group. A result due to J-P.~Serre (cf.~\cite{ser:coho}) shows that all finite-index torsion-free subgroups of $\Gamma$ have the same cohomological dimension over a non-zero associative ring $R$, which is then defined to be the {\em virtual cohomological dimension $\vcd_R(\Gamma)$ of $\Gamma$ over $R$}. If the cohomological dimension~$\ccd_R(\Gamma)$ of~$\Gamma$ is finite, then $\vcd_R(\Gamma)=\ccd_R(\Gamma)$ (the proof of~\cite[\S VIII.2, Proposition~2.4(a)]{br:coho} extends verbatim to every ring $R$). From now on $\vcd_\Z$ will be denoted by $\vcd$.
In 1993, M.~Bestvina (cf.~\cite{bes:vcd}) provided an effective method to compute $\vcd_R(W)$ of a finitely generated Coxeter group $(W,S)$, where $R$ is either the integers or a prime field. The construction, recalled below, produces an $R$-acyclic complex $B_R(W,S)$ of dimension $\vcd_R(W)$. 

\smallskip
A subset $X\subseteq\R^n$ is a {\em polyhedron} if each point $a\in X$ has a cone neighbourhood $N=aL$ in $X$ with $L$ compact (cf.~{\cite{rs_piecewise}}). Recall that a polyhedron can always be triangulated as a locally finite simplicial complex. Let $(W,S)$ be a finitely generated Coxeter group and denote by $\mathfrak S$ the family of all spherical subsets of $S$ ordered by inclusion. For every maximal element $J\in\mathfrak S$ define $P_F$ to be a point. Let $J\in \mathfrak S$ be non-maximal and assume that $P_{J'}$ has been constructed for all $J'\in\mathfrak S$ such that $J'\supset J$. Define $P_J$ to
be an {$R$}-acyclic polyhedron containing $\bigcup_{_{J'\supset J}} P_{J'}$ of the least possible dimension. The construction ends once $P_\emptyset$ has been built.

In Section~\ref{s:flatrk} the Bestvina's complex $B_R(W,S)$ is used to compute the rational cohomological dimension of Coxeter groups of hyperbolic type. Moreover, for arbitrary Coxeter groups, one can use the Bestvina's complex (and the associated augmented cellular complex) to deduce that $\ccd_\Q(W)\leq\vcd(W)<\infty.$ Here $\ccd_\Q(W)$ denotes the {\em rational cohomological dimension of $W$}, i.e.,
\begin{equation}
    \ccd_\Q(W)=\max\{n\geq0\mid \mathrm{H}^n(W,\Q[W])\neq0\}
    \end{equation} 
(cf.~\cite[Chapter~VIII, Proposition~6.7]{br:coho}).

    \subsection{Coxeter groups of hyperbolic type}
\label{s:hyper}
Let $(W,S)$ be a Coxeter group and put $|S|=n$. Given $V=\spn_{\R}\{\alpha_s\mid s\in S\}$, let $B\colon V\times V\to\R$ be the symmetric bilinear form on $V$ given by
\begin{equation*}
\label{eq:bil}
B(\alpha_s,\alpha_t)=-\cos\Big(\frac{\pi}{m_{st}}\Big),\quad\,\forall\,s,t\in S.
\end{equation*}
Let $\sigma\colon W\to\Gl(V)$ denote the geometric representation of $W$
(cf.~\cite[\S 5.3]{hum:ref}). The bilinear form $B$ is invariant under $\image(\sigma)$
(cf.~\cite[Proposition~5.3]{hum:ref}). Define the \emph{cone} of $(V,B)$ by
\begin{equation}
\label{eq:cone}
C:=\{\lambda\in V\mid B(\lambda,\alpha_s)<0\ \text{for all $s\in S$}\}.
\end{equation}
The irreducible Coxeter group $(W,S)$ is said to be \emph{of hyperbolic type} if
\begin{itemize}
\item[(a)] $B$ has signature $(n-1,1)$, and
\item[(b)] $B(\lambda,\lambda)<0$ for all $\lambda\in C$.
\end{itemize}
Coxeter groups of hyperbolic type exist only for $3\leq n\leq 10$ (cf.~\cite[p.~141]{hum:ref}). For a Coxeter group of hyperbolic type $(W,S)$ the image of the geometric
representation is contained in $O(n-1,1)$. In particular, 
$\image(\sigma)$ is a lattice in $O(n-1,1)$ 
if, and only if, $(W,S)$ is of hyperbolic type (cf.~\cite[Remark~6.8]{hum:ref},
\cite[pp.~131-135]{bou:cox} and~\cite{kos:cox}).
If $(W,S)$ is of hyperbolic type and crystallographic as a Coxeter group, then $\image(\sigma)$ also
stabilises some lattice $\Lambda\subset V$ (cf.~\cite[\S 6.6]{hum:ref}). Thus, if $\euB=\{\lambda_i\mid 1\leq i\leq |S|\}$ is a
free generating system of $\Lambda$, there exists some positive integer $e\in\N$ such that
$B(\lambda_i,\lambda_j)\in\tfrac{1}{e}\Z$. 
Defining $\boG(\Z)=\{\alpha\in\Gl_\Z(\Lambda)\mid B(\alpha(v_i),\alpha(v_j))=B(v_i,v_j)\}$
yields an affine algebraic group scheme $\boG$ defined over $\Z$ satisfying
\begin{equation}
\label{eq:arith}
W\subseteq \boG(\Z)\subset\boG(\R)=O(n-1,1).
\end{equation}
Since $W$ is a lattice in $O(n-1,1)$, so is $\boG(\Z)$ and
$|\boG(\Z)\colon W|<\infty$. Consequently,
\begin{fact}
\label{fact:crhypcox}
Let $(W,S)$ be a crystallographic Coxeter group of hyperbolic type. Then
$W$ is an arithmetic lattice in $O(n-1,1)$, where $n=|S|$.
\end{fact}
The Coxeter group~$(W,S)$ of hyperbolic type is said to be {\em of compact hyperbolic type} if $W$ is cocompact in $O(n-1,1)$ (cf.~\cite[\S 6.9]{hum:ref}). Otherwise, $(W,S)$ is said to be \emph{of non-compact hyperbolic type}. 
\begin{prop}\label{prop:vcdhyp}
    Let $(W,S)$ be a Coxeter group of compact hyperbolic type. Then $$\algrk(W)=1\quad\text{and}\quad\ccd_\Q(W)=\vcd(W)=|S|-1\geq 2.$$
\end{prop}
\begin{proof}
    By~\cite[\S 6.8]{hum:ref}, every proper special subgroup of $(W,S)$ is spherical. This fact has two consequences. First, from~\cite[Theorem~17.1(iii)$\Rightarrow$(ii)]{mouss} one deduces that $\algrk(W)=1$.
    Second, the poset of all spherical special subgroups of $(W,S)$ is isomorphic to the poset of all spherical special subgroups of an affine Coxeter group $(W',S')$ with $|S'|=|S|$, where the two posets are ordered by inclusion. Therefore, $B_R(W,S)$ and $B_R(W',S')$  (for $R\in\{\Z,\Q\}$) have the same dimension. By~\cite{bes:vcd}, one concludes that 
$$\ccd_\Q(W)=\ccd_\Q(W')=|S|-1=\vcd(W')=\vcd(W).$$
\end{proof}
\begin{rem}
The statement of Proposition~\ref{prop:vcdhyp} can be alternatively deduced from~\cite[Corollary~11.4.3]{borser:cor} and the fact that every cocompact discrete subgroup
 of $O(n-1,1)$ is Gromov-hyperbolic (cf.~\cite[p.~93]{grom:hyp}). 
\end{rem}
\begin{example}\label{ex:comphyp}
    From Lann\'er's classification (cf.~\cite[\S 6.9]{hum:ref}),
\cite[Proposition~6.6]{hum:ref} and \cite[\S 6.7]{hum:ref}, 
the Coxeter group associated to the Coxeter diagram
\begin{equation*}
\label{eq:comp1}
\xymatrix{\bullet\ar@{-}[d]\ar@{-}[r]^4&\bullet\ar@{-}[d]\\
\bullet\ar@{-}[r]^4&\bullet}
\end{equation*}
is the unique crystallographic Coxeter group of compact hyperbolic type
and rank greater than or equal to $4$. 
 Therefore, by Proposition~\ref{prop:vcdhyp}, if $G$ is a group with a locally finite twin root datum of type $(W,S)$,
  the geometric completion $\bar G$ of $G$ satisfies:
  $$1=\algrk(W)=\flatrk(\bar G)\quad\text{and}\quad\ccd_{\Q}(\bar G)=\ccd_\Q(W)=\vcd(W)=3.$$
\end{example}
 \begin{prop}
 \label{prop:noncomp}
Let $(W,S)$ be a Coxeter group of non-compact hyperbolic type.  
Then
\begin{equation*}
\label{eq:ncom}
\vcd(W)=\algrk(W)= |S|-2.
\end{equation*}
In particular, $\ccd_\Q(W)=|S|-2$.
 \end{prop}
 
 \begin{proof}
 Since $\algrk(W)\leq \ccd_\Q(W)\leq \vcd(W)$ (cf.~Theorem~\ref{thm:fl-cd} and Section~\ref{sus:vcd}), it suffices to prove the first part of the statement.
 Throughout the proof, put $|S|=n$.
 By~\cite[p.~140]{hum:ref}, every Coxeter group of non-compact hyperbolic type $(W,S)$ 
contains an affine special subgroup $(W^\prime,S^\prime)$ with $|S^\prime|=n-1$.
Hence, 
\begin{equation}
\label{ineq:ncom1}
\vcd(W)\geq\algrk(W)\geq\algrk(W')=n-2.
\end{equation}
It remains to show that $\vcd(W)\leq n-2$. We first deal with the  crystallographic case. Let $X$ denote the symmetric space associated to $O(n-1,1)$. Then,
\begin{equation}
\label{eq:dimX}
 \dim(X)=\binom{n}{2}-\binom{n-1}{2}=n-1.
 \end{equation}
Since $W\backslash X$ is non-compact, one has $\partial X\neq\emptyset$ (cf.~\cite[Theorem~9.3]{borser:cor}). Equivalently, the $\Q$-rank of $\boG$
 satisfies
 $\ell(\boG)\geq 1$, which yields $$\vcd(W)=  \dim(X)-\ell(\boG)\leq n-2,$$ (cf.~\cite[Corollary~11.4.3]{borser:cor}).

In the remaining non-crystallographic cases (cf.~\cite[\S 6.9, Figure~3]{hum:ref}), 
 Fact~\ref{fact:ncsph} below applies and the Bestvina's complex $B_\Z(W,S)$ has at most $n-1$ vertices. By~\cite{bes:vcd}, one concludes that $\vcd(W)\leq n-2$.
\end{proof}
\begin{fact}\label{fact:ncsph}
    Let $(W,S)$ be a Coxeter group of non-compact hyperbolic type which is not crystallographic. Then $(W,S)$ admits exactly $|S|-1$ maximal spherical subsets.
    \end{fact}
\begin{proof}
In what follows, by an \emph{$r$-subset} of~$S$ we mean a subset with $r$~elements. Moreover, we make use of the classifications of Coxeter groups of spherical, affine and hyperbolic types by their Coxeter diagram as given, for example, in~\cite[pp.~32, 34, 142-144]{hum:ref}). Since $(W,S)$~is a Coxeter group of hyperbolic type, observe that every $r$-subset of~$S$, with $r\leq |S|-2$, is spherical (cf.~\cite[Proposition~6.8]{hum:ref}, recalling that every proper special subgroup of a Coxeter group of positive type is spherical). Thus the maximal spherical subsets of $S$ have cardinality either $|S|-1$ or $|S|-2$.

Let first $|S|=3$, say $S=\{a,b,c\}$. 
Since $(W,S)$ is non-compact hyperbolic, at least one edge of $\Gamma(W,S)$ is labelled by~$\infty$, say $m_{ab}=\infty$. Moreover, being~$(W,S)$ not crystallographic, either $m_{ac}<\infty$ or $m_{bc}<\infty$. If both~$m_{ac}$ and~$m_{bc}$ are finite, then the maximal spherical subsets of~$S$ are~$\{a,c\}$ and~$\{b,c\}$. If $m_{bc}=\infty$ and~$m_{ac}<\infty$, then the maximal spherical subsets are~$\{a,c\}$ and~$\{b\}$. An analogous conclusion holds if $m_{ac}=\infty$ and $m_{bc}<\infty$.
     
Assume now that $|S|\geq 4$. By~\cite[\S 6.9, Figure~3]{hum:ref}, there are only seven cases: six with $|S|=4$ and one with $|S|=6$.
     If $S=\{a,b,c,d\}$, then $(W,S)$ has one of following Coxeter diagrams:
     \begin{equation}\label{eq:S=4.1}
         \xymatrix{
         {^d{\bullet}}\ar@{-}[r]\ar@{-}[d]_4 & {{\bullet}^c}\ar@{-}[d]^4\\
         {_{a}{\bullet}}\ar@{-}[r]_4           & {{\bullet}_{b}}
         } \qquad 
         \xymatrix{
         {^d{\bullet}}\ar@{-}[r]\ar@{-}[d] & {{\bullet}^c}\ar@{-}[d]\\
         {_{a}{\bullet}}\ar@{-}[r]_6           & {{\bullet}_{b}}
         } \qquad
         \xymatrix{
         {^d{\bullet}}\ar@{-}[r]^4\ar@{-}[d] & {{\bullet}^c}\ar@{-}[d]\\
         {_{a}{\bullet}}\ar@{-}[r]_6           & {{\bullet}_{b}}
         } \qquad
         \xymatrix{
         {^d{\bullet}}\ar@{-}[r]^5\ar@{-}[d] & {{\bullet}^c}\ar@{-}[d]\\
         {_{a}{\bullet}}\ar@{-}[r]_6           & {{\bullet}_{b}}
         }
         \end{equation}
         \begin{equation}\label{eq:S=4.2}
         \xymatrix@R=0,3cm{
                        &        & {\bullet}^b\ar@{-}[dd]\\
        {\stackrel{d}{\bullet}}\ar@{-}[r]_5 &\stackrel{a}{\bullet}\ar@{-}[ur]\ar@{-}[dr] & \\
                        &         & {\bullet}_c
         }
          \qquad \qquad
         \xymatrix{
         \stackrel{a}{\bullet}\ar@{-}[r]_6 & \stackrel{b}{\bullet}\ar@{-}[r] & \stackrel{c}{\bullet}\ar@{-}[r]_5 & \stackrel{d}{\bullet}
         }
     \end{equation}
     Let first $(W,S)$ be associated to one of the Coxeter diagrams in \eqref{eq:S=4.1}. Among the $3$-subsets of $S$, only $\{a,c,d\}$ and $\{b,c,d\}$ are spherical. Note also that  $\{a,b\}$ is the only $2$-subset of $S$ which is not contained in both $\{a,c,d\}$ and $\{b,c,d\}$. Thus, $\{a,c,d\}$ and $\{b,c,d\}$ and $\{a,b\}$ are the maximal spherical subsets of $S$.
     Assume now that $(W,S)$ is associated to one of the Coxeter diagrams in \eqref{eq:S=4.2}. Among the $3$-subsets of $S$, the following ones are spherical: $\{a,b,d\}$, $\{a,c,d\}$ and $\{b,c,d\}$. Hence, they are the maximal spherical subsets of $S$.

     Finally, let $S=\{a,b,c,d,e,f\}$. Then $(W,S)$ has the following Coxeter diagram:
     \begin{equation*}\label{eq:S=6}
         \xymatrix{
                                  & {\stackrel{a}{\bullet}}\ar@{-}[rr] &  & \stackrel{b}{\bullet} \ar@{-}[dr] &              \\
    {_f\bullet}\ar@{-}[ur]\ar@{-}[dr] &          &          &                     
                                  & {\bullet_c}\ar@{-}[dl] \\
                                  & {_e\bullet}\ar@{-}[rr]_4 &  & {\bullet_d}  &      &
         }
     \end{equation*}
    For $x\in S$, the set $S\setminus \{x\}$ is spherical if and only if $x\not\in\{a,b\}$. In particular, the $4$-subset $S\setminus\{x,y\}$ is not contained in a spherical $5$-subset of $S$ if, and only if, $\{x,y\}=\{a,b\}$. One concludes that the maximal spherical subsets of $S$ are $S\setminus \{x\}$, for some $x\not\in\{a,b\}$, and $S\setminus \{a,b\}$. 
\end{proof}
\begin{rem}\label{rem:claimcry}
     Fact~\ref{fact:ncsph} is \emph{not} true for arbitrary crystallographic Coxeter groups of non-compact hyperbolic type.
     For example, let $(W,S)$ be the Coxeter group defined by the following Coxeter diagram:
     \begin{equation*}
         \xymatrix@R=0,3cm{
                        &        & {\bullet}^c\ar@{-}[dd]\\
        {\stackrel{a}{\bullet}}\ar@{-}[r]_6 &\stackrel{b}{\bullet}\ar@{-}[ur]\ar@{-}[dr] & \\
                        &         & {\bullet}_d
         }
     \end{equation*}
     Among the $3$-subsets of $S=\{a,b,c,d\}$, only $\{a,c,d\}$ is spherical. Moreover, there are three spherical $2$-subsets of $S$, namely $\{a,b\}$, $\{b,c\}$ and $\{b,d\}$, that are not contained in $\{a,c,d\}$. As a conclusion, $(W,S)$ admits exactly $4=|S|$ maximal spherical subsets and so does not satisfy the conclusion of Fact~\ref{fact:ncsph}.
 \end{rem}
\subsection{Coxeter groups with maximal algebraic rank}
For every Coxeter group $(W,S)$, one has 
\begin{equation}\label{eq:algcd}
    \algrk(W)\leq\ccd_\Q(W)\leq\vcd(W)\leq |S|-1.
\end{equation} 
The first inequality comes from the fact that the rank of a free abelian group is its rational cohomological dimension, and from the fact that $H\leq G$ implies $\ccd_\Q(H)\leq \ccd_\Q(G)$. For the second and the third inequalities, see Section~\ref{sus:vcd}. 

The {maximum possible algebraic rank} for $(W,S)$, namely $|S|-1$, is attained only in the affine case as shown below.
\begin{prop}\label{prop:maxalgrk}
   Let $(W,S)$ be a  Coxeter group with $|S|\geq 2$. Then $(W,S)$ is affine if, and only if,  $\algrk(W)=|S|-1$.
\end{prop}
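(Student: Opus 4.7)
The plan is to prove the two implications separately. The ``if'' direction is classical: if $(W,S)$ is affine with $|S|=n$, then $W$ realises as a Euclidean reflection group acting on $\R^{n-1}$, and its translation subgroup is isomorphic to $\Z^{n-1}$. Hence $\algrk(W)\geq n-1$; combined with the upper bound $\algrk(W)\leq |S|-1$ from~\eqref{eq:algcd}, equality follows.

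For the ``only if'' direction, I would appeal to Krammer's theorem \cite[Theorem~6.8.3]{krammer}, which characterises $\algrk(W)$ combinatorially as the maximum of $\sum_{i=1}^{k}(|T_i|-1)$ taken over collections $\{T_1,\ldots,T_k\}$ of pairwise disjoint subsets of $S$ such that $m_{st}=2$ whenever $s\in T_i$ and $t\in T_j$ for $i\neq j$, and such that each special subgroup $(W_{T_i},T_i)$ is irreducible affine. Assuming $\algrk(W)=|S|-1$ and fixing such a configuration realising this maximum, set $T:=T_1\sqcup\cdots\sqcup T_k$. Then $|T|=\sum_i|T_i|\leq|S|$ and $|T|-k=|S|-1$, whence $k\leq 1$. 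Since $|S|\geq 2$ forces $\algrk(W)\geq 1$, one must have $k=1$, so $T_1=S$ and $(W,S)$ itself is irreducible affine.

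The main obstacle is matching the exact form of Krammer's theorem to the combinatorial description above. If the statement in \cite{krammer} is phrased in terms of arbitrary reflection subgroups rather than special ones, a short reduction would be required: any free abelian subgroup of $W$ of maximal rank should be shown to lie (up to conjugation) inside a product of irreducible affine special subgroups whose generating sets are pairwise disjoint and elementwise commuting. This reduction is implicit in Krammer's classification of ``essential'' flat subgroups, but stating it in the precise form needed here may require tracing through the notion of standard flat of maximal rank in~\cite{krammer}. Once this point is settled, the counting argument of the previous paragraph closes the proof immediately.
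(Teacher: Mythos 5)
Your proof follows essentially the same route as the paper: the forward implication via the translation lattice of an affine reflection group together with the upper bound from~\eqref{eq:algcd}, and the converse via Krammer's theorem plus a disjointness count. The one point that needs repair is your statement of Krammer's theorem. In \cite[Theorem~6.8.3]{krammer} the standard free abelian subgroups are indexed by pairwise disjoint, pairwise perpendicular irreducible \emph{non-spherical} subsets $I_1,\dots,I_k$ of $S$ --- not necessarily affine ones --- with the $j$-th factor contributing $|I_j|-1$ to the rank when $W_{I_j}$ is affine and only $1$ otherwise. Your affine-only version understates the invariant in general (e.g.\ a compact hyperbolic Coxeter group of rank $3$ with all labels finite has $\algrk(W)=1$ but admits no irreducible affine special subset), so "a configuration realising the maximum" in your sense need not exist. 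The counting argument survives under the correct statement, since each factor contributes at most $|I_j|-1$, whence $|S|-1\leq\sum_j(|I_j|-1)\leq |S|-k$ still forces $k=1$ and $I_1=S$; but one then needs a final line that your version elides by having baked "affine" into the hypothesis: the single component contributes $|S|-1$ to the rank, and a non-affine irreducible component contributes only $1$, so either $|S|=2$ (where irreducible non-spherical means $m_{st}=\infty$, i.e.\ $W\simeq D_\infty$, which is affine of type $\widetilde{A}_1$) or $W_{I_1}$ must be affine. This is precisely how the paper closes the argument, so the gap you flagged is real but is filled by a one-sentence addendum rather than a new idea.
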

\begin{proof} Assume that $(W,S)$ is affine. Then $W$ is the semidirect product of its translation subgroup $T$ by a spherical Coxeter group (cf.~\cite[Proposition~4.2]{hum:ref}). In particular, $T$ is a free abelian subgroup of rank $|S|-1$ and $\algrk(W)=|S|-1$ (cf.~\cite[Proposition~2.3]{MV24}).

Now assume that $\algrk(W)=|S|-1$. Since $|S|-1>0$, $(W,S)$ is non-spherical. 
By Krammer's theorem (cf.~\cite[Theorem~6.8.3]{krammer}), 
the algebraic rank of $W$ coincides with the rank of a so-called standard free abelian subgroup, say $\prod_{j=1}^kH_{j}$ (cf.~\cite[p.~154]{krammer}). In particular, there are irreducible non-spherical subsets $I_1, \ldots, I_k$ of $S$ which are pairwise perpendicular (i.e., for all $j\neq l$, every element of $I_{j}$ commutes with every element of $I_l$) and such that
 \begin{equation}\label{eq:algrk<=}
        |S|-1=\algrk(W)=\sum_{j=1}^k\algrk(H_{j})\leq \sum_{j=1}^k(|I_j|-1).
    \end{equation}
    As the $I_j$'s are pairwise disjoint, $|I_1|+\ldots+|I_k|\leq |S|$ and, by \eqref{eq:algrk<=}, one has 
    $$|S|-1\leq \sum_{j=1}^k |I_j|-k\leq |S|-k.$$
    Necessarily, $k=1$ and $I_1=S$. Since $|S|\geq 2$, the algebraic rank of $H_1$ is greater than $1$ and then $H_1$ is the translation subgroup of the affine special subgroup $(W_{I_1}, I_1)$ (cf.~\cite[p~154]{krammer}). One concludes that $(W,S)=(W_{I_1}, I_1)$ is affine.
\end{proof}

\begin{cor}\label{cor:vcdaff}
    Let $(W,S)$ be an affine Coxeter group. Then $\algrk(W)=\vcd(W)=|S|-1$.
\end{cor}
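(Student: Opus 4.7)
The plan is to derive the corollary immediately from Proposition~\ref{prop:maxalgrk} combined with the chain of inequalities~\eqref{eq:algcd}. Specifically, since $(W,S)$ is assumed to be affine, Proposition~\ref{prop:maxalgrk} yields $\algrk(W)=|S|-1$. On the other hand, for any Coxeter group one has
\begin{equation*}
\algrk(W)\leq\ccd_\Q(W)\leq\vcd(W)\leq |S|-1.
\end{equation*}
Plugging in the value $\algrk(W)=|S|-1$ forces equality throughout, and in particular $\vcd(W)=|S|-1$.

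There is essentially no obstacle here: the whole content of the corollary is the observation that the lower bound coming from the algebraic rank meets the trivial upper bound $|S|-1$ on the virtual cohomological dimension, so every intermediate invariant in~\eqref{eq:algcd} is squeezed to the same value. I would just state the two ingredients (Proposition~\ref{prop:maxalgrk} and the inequalities~\eqref{eq:algcd}) and conclude in one line.
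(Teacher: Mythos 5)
Your proposal is correct and is exactly the argument the paper intends: the corollary is stated immediately after Proposition~\ref{prop:maxalgrk} with no separate proof, precisely because the forward implication of that proposition gives $\algrk(W)=|S|-1$ and the chain \eqref{eq:algcd} then squeezes $\ccd_\Q(W)$ and $\vcd(W)$ to the same value. The only point worth a passing remark is that Proposition~\ref{prop:maxalgrk} assumes $|S|\geq 2$, which holds automatically here since the paper's convention makes affine Coxeter groups irreducible (and infinite), so the smallest case is $\widetilde{A}_1$ with $|S|=2$.
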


\subsection{Coxeter groups of rank $3$} Given a Coxeter group $(W,S)$ of rank $3$, we completely describe how the virtual cohomological dimension and the rational cohomological dimension of $W$ relate to its algebraic rank.
\begin{prop}\label{prop:3gen}
    Let $(W,S)$ be a Coxeter group with $|S|=3$, say $S=\{a,b,c\}$, and with Coxeter matrix $[m_{st}]_{s,t\in S}$. Then $(W,S)$ falls into one of the following cases:
    \begin{itemize}
       \item[(i)] $\algrk(W)=\ccd_\Q(W)=\vcd(W)=0$. It is attained if, and only if, $(W,S)$ is spherical. I.e., $(W,S)$ has of one of the following types: $A_1^3$, {$A_3$, $B_3$,} $H_3$ or $I_2(m)\times A_1$ for $3\leq m<\infty$);
        \item[(ii)] $\algrk(W)=\ccd_\Q(W)=\vcd(W)=2$. It is attained if, and only if, $(W,S)$ is affine. I.e., $(W,S)$ has of one of the following types: $\widetilde{A}_2$, $\widetilde{B}_2=\widetilde{C}_2$ or $\widetilde{G}_2$;
        \item[(iii)] $\algrk(W)=\ccd_\Q(W)=\vcd(W)=1$. It is attained if, and only if, at least one among $m_{ab}$, $m_{bc}$ and $m_{ca}$ is $\infty$;
       \item[(iv)] $\algrk(W)=1<\ccd_\Q(W)=\vcd(W)=2$. I.e.,  {$(W,S)$ is compact hyperbolic of rank 3.}
    \end{itemize}
\end{prop}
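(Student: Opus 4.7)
The plan is to exhaust the possibilities for the Coxeter matrix $[m_{st}]_{s,t\in S}$ with $|S|=3$, partition them into four disjoint and jointly exhaustive classes, and then verify the invariant triple $(\algrk(W),\ccd_\Q(W),\vcd(W))$ on each class. The partition is the following: either at least one of $m_{ab},m_{bc},m_{ca}$ equals $\infty$, or all are finite, in which case the sign of $1/m_{ab}+1/m_{bc}+1/m_{ca}-1$ (the classical Gauss--Bonnet trichotomy, cf.~\cite[\S 2]{hum:ref}) determines whether $(W,S)$ is spherical, irreducible affine, or compact hyperbolic. These four classes correspond to (iii), (i), (ii), (iv) respectively. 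Since the resulting four invariant triples produced below are pairwise distinct, namely $(0,0,0)$, $(2,2,2)$, $(1,1,1)$, $(1,2,2)$, it suffices to prove each forward implication.

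Cases (i), (ii), and (iv) are short. For (i), $W$ being finite forces all three invariants to vanish, and the listed types are precisely the rank-$3$ finite Coxeter groups in the standard classification. For (ii), Corollary~\ref{cor:vcdaff} gives $\algrk(W)=\vcd(W)=|S|-1=2$, and the chain~\eqref{eq:algcd} squeezes $\ccd_\Q(W)=2$; the listed types exhaust the irreducible affine Coxeter diagrams of rank $3$. For (iv), Proposition~\ref{prop:vcdhyp} applied with $|S|=3$ yields $\algrk(W)=1$ and $\ccd_\Q(W)=\vcd(W)=2$ directly.

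The substance lies in case (iii). After relabelling we may assume $m_{ab}=\infty$. Then $S=\{a,c\}\cup\{b,c\}$ with $S_\bullet=\{c\}$ is a spherical $\infty$-decomposition, so Theorem~\hyperref[thmE]{E} gives the splitting $W\simeq W_{\{a,c\}}\amalg_{W_{\{c\}}}W_{\{b,c\}}$. Each vertex-group is a dihedral group (finite or infinite), and the edge-group $W_{\{c\}}\simeq\Z/2\Z$ is finite. If, in addition, $m_{ac}=\infty$ (resp.~$m_{bc}=\infty$), a further application of Theorem~\hyperref[thmE]{E} within the corresponding vertex-group splits $D_\infty\simeq\langle a\rangle\ast\langle c\rangle$ (resp.~$\langle b\rangle\ast\langle c\rangle$), so after at most two iterations $W$ is exhibited as the fundamental group of a finite graph of \emph{finite} groups. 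Its action on the associated Bass--Serre tree has finite stabilisers, so $\vcd(W)\leq 1$; since $W$ is infinite, $\vcd(W)=1$, and~\eqref{eq:algcd} forces $\ccd_\Q(W)=1$. Finally, $\langle a,b\rangle\simeq D_\infty$ contains an infinite cyclic subgroup, so $\algrk(W)\geq 1$, and~\eqref{eq:algcd} forces equality.

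The principal obstacle is case (iii): reducing $W$ to a graph of finite groups via iterated applications of Theorem~\hyperref[thmE]{E}. As a cross-check independent of amalgamation closure, one can build Bestvina's complex $B_\Z(W,S)$ by hand in case (iii): since $\{a,b,c\}$ is not spherical, the maximal cells have dimension at most $1$, and a short case analysis (depending on how many $m_{st}$ equal $\infty$) produces a finite tree, recovering $\vcd(W)=1$ directly.
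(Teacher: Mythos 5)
Your proof is correct, and it diverges from the paper's in the one place where there is real content, namely case (iii). The paper handles that case by quoting Stallings--Swan (so that $\ccd_\Q(W)=1$ and $\vcd(W)=1$ are both equivalent to $W$ being infinite and virtually free) and then invoking the chordality criterion of Theorem~\ref{thm:kps}(iv) (i.e.\ \cite[Theorem~34]{mt:visualdec}), which for $|S|=3$ reduces to ``some $m_{st}=\infty$''. You instead build the splitting by hand: one or two applications of the $\infty$-decomposition machinery exhibit $W$ as the fundamental group of a finite tree of \emph{finite} groups, the Bass--Serre tree gives $\vcd(W)\le 1$, and the chain \eqref{eq:algcd} together with $\langle a,b\rangle\simeq D_\infty$ squeezes all three invariants to $1$. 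This is more self-contained (no appeal to the Karrass--Pietrowski--Solitar-type characterisation) at the cost of a short explicit case analysis; the paper's route is shorter on the page but leans on heavier imported results. Your cases (i), (ii), (iv) and the overall logic (partition into four exhaustive classes with pairwise distinct invariant triples, so only the forward implications need proving) match the paper, which likewise cites Corollary~\ref{cor:vcdaff}, Proposition~\ref{prop:vcdhyp} and \cite[\S 6.7]{hum:ref} for the trichotomy on finite-entry matrices. One small attribution slip: the splitting $W\simeq W_{S_\btd}\amalg_{W_{S_\bullet}}W_{S_\btu}$ follows directly from the definition of an $\infty$-decomposition (as recorded in the introduction and in the (ii)$\Rightarrow$(i) direction of Theorem~\ref{thm:dav}), not from the full statement of Theorem~\hyperref[thmE]{E}, which is the converse implication about ends; this does not affect the validity of your argument.
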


\begin{proof}
  By \eqref{eq:algcd}, one has $\algrk(W)\leq \ccd_\Q(W)\leq\vcd(W)\leq 2$.
  The Coxeter group $(W,S)$ is spherical if, and only if, $\algrk(W)=0$ or, equivalently, $\vcd(W)=0$. 
 By Proposition~\ref{prop:maxalgrk}, $\algrk(W)=2$ if, and only if, $(W,S)$ is affine. Therefore, Corollary~\ref{cor:vcdaff} yields~(ii).
{By~\cite[\S 6.7]{hum:ref}, (iv) is a consequence of Proposition~\ref{prop:vcdhyp} and items (i),(ii) and (iii).}

 Finally, assume $\algrk(W)=1$. By Stallings--Swan theorem, both the conditions $\ccd_\Q(W)=1$ and $\vcd(W)=1$ are equivalent to $W$ being infinite and virtually free. 
In turn, by~\cite[Theorem~34]{mt:visualdec}, $W$ is infinite and virtually free if, and only if, $W$ is infinite, $\Gamma_\infty(W,S)$ is chordal and every clique in  $\Gamma_\infty(W,S)$ is spherical. Since $|S|=3$, this is equivalent to require that at least one among $m_{ab}$, $m_{bc}$ and $m_{ca}$ is $\infty$.
\end{proof} 
\begin{example}
    The Coxeter groups associated to the Coxeter diagrams
\begin{equation}
\label{eq:comp2}
\xymatrix{\bullet\ar@{-}[r]^6&\bullet\ar@{-}[r]^6&\bullet}
\qquad\qquad
\xymatrix@R=0,3cm{&\bullet\ar@{-}[2,0]\\
\bullet\ar@{-}[ur]^4\ar@{-}[dr]_4&\\
&\bullet}\qquad\qquad
\xymatrix@R=0,3cm{&\bullet\ar@{-}[2,0]\\
\bullet\ar@{-}[ur]^6\ar@{-}[dr]_6&\\
&\bullet}
\end{equation}
are the only crystallographic Coxeter groups of hyperbolic type of rank~$3$. Let $(W,S)$ be one as above. If $G$ is a group with a locally finite twin root datum of type $(W,S)$,
  the geometric completion $\bar G$ of $G$ satisfies $\flatrk(\bar G)=1$ and $\ccd_{\Q}(\bar G)=2.$
\end{example}
\subsection{Tree decompositions and Coxeter groups}
Using tree decompositions of the presentation diagram of a Coxeter group $(W,S)$, we provide an abstract decomposition theorem for $(W,S)$ (cf.~Theorem~\ref{thm:abstDec}). 

In the following, we will deal only with undirected graphs and refer to them as "graphs".

\smallskip
Let $\Gamma=(V,E)$ be a graph. A \emph{tree decomposition} of~$\Gamma$ is a pair $(\caT,\caV)$ consisting of a tree~$\euT$ and a family $\caV=\{V_t\}_{t\in V(\euT)}$ of non-empty subsets of $V$ labelled by the vertex set $V(\euT)$ of~$\euT$ (with the convention that $V=\emptyset$ implies $V(\euT)=\emptyset$) and satisfying the following properties:
\begin{itemize}
    \item[(T1)] $V=\bigcup_{t\in V(\euT)}V_t$;
    \item[(T2)] for every $\{x,y\}\in E$, there is $t\in V(\euT)$ such that $x,y\in V_t$; 
    \item[(T3)] for all $s,t\in V(\euT)$ and every $u\in V(\euT)$ occurring in the geodesic in $\euT$ connecting $s$ and $t$, one has $V_s\cap V_t\subseteq V_u$.
\end{itemize}
A tree decomposition $(\euT,\euV)$ is said to be \emph{reduced} if $E(\euT)=\emptyset$ or, for all $\{t,u\}\in E(\euT)$, $V_t\cap V_u$ is properly contained in both $V_t$ and $V_u$.

\begin{rem}
    The notion of tree decomposition was originally introduced by R.~Halin, although it became an established tool in graph theory only after the series of articles by N.~Robertson and P.~D.~Seymour (cf.~\cite{roseIII} for instance).
\end{rem}
\begin{example}[\protect{cf.~\cite[Example~1.1]{dunkr:vercut}}]\label{ex:treedec1}
 Let $\Gamma$ be a connected graph, and denote by $\big(\Lambda_i=(V(\Lambda_i),E(\Lambda_i))\big)_{i\in I}$ the family of all its maximal $2$-connected subgraphs. Recall that a graph is \emph{$2$-connected} if it is connected and removing an arbitrary vertex from the graph keeps the graph connected.
Moreover, let $(v_j)_{j\in J}$ be the family of all \emph{cut-vertices} of $\Gamma$, i.e., all those vertices $v_j$ for which $\Gamma-v_j$ is no longer connected. A \emph{block} of $\Gamma$ is either a maximal $2$-connected subgraph or a cut-vertex of $\Gamma$.
The \emph{block decomposition} of $\Gamma$ is the pair $(\caT,\caV)$ consisting of a bipartite tree $\caT=(V(\caT),E(\caT))$ with $V(\euT)=I\sqcup J$, $E(\euT)=\{\{i,j\}\in \caP_2(V(\caT))\mid i\in I,\,j\in J,\,v_j\in V(\Lambda_i)\}$, and a collection of subsets of $V$, say $\caV=\{V_k\}_{k\in I\sqcup J}$, defined by $V_i=V(\Lambda_i)$ for every $i\in I$, and by $V_j=\{v_j\}$ for every $j\in J$. 
One proves that $(\caT,\caV)$ is a tree decomposition of~$\Gamma$. 
\end{example}
\begin{example}\label{ex:treedec2}
  Let $\Gamma=(V,E)$ be a finite undirected graph. The \emph{graph of maximal cliques 
$\mx(\Gamma)=(V(\mx(\Gamma)),E(\mx(\Gamma)))$} is given by
\begin{equation}
\label{eq:maxcl}
\begin{aligned}
V(\mx(\Gamma))&=\{\Lambda\subseteq\Gamma\mid
\Lambda\ \text{a maximal clique}\}\\
E(\mx(\Gamma))&=\big\{\{\Lambda,\Xi\}\subseteq
\caP_2(V(\mx(\Gamma)))\mid\ 
V(\Lambda)\cap V(\Xi)\neq\emptyset\big\}.
\end{aligned}
\end{equation}
A maximal subtree $\caT\subseteq{\mx}(\Gamma)$ has the \emph{clique intersection property} if, for every pair $(\Lambda,\Xi)$ of maximal cliques in $\Gamma$, the unique geodetic in $\caT$ -- determined by the vertex-sequence $(\Lambda=\Xi_0, \Xi_1,\ldots,\Xi_{r}=\Xi)$ -- satisfies $\Lambda\cap\Xi\subseteq\Xi_i$ for all $0\leq i\leq r$.
 The existence of a maximal subtree $\caT\subseteq{\mx}(\Gamma)$ satisfying this property is equivalent to requiring that $\Gamma$ is chordal (cf.~\cite[Theorem~3.2]{blpey} and~\cite[Proposition~5.5.1]{diest}).
Provided $\Gamma$ is chordal, the pair $(\caT,\{V_\Lambda\}_{\Lambda\in V(\euT)})$, where $V_\Lambda\subseteq V$ is the set of vertices of the clique $\Lambda$ in $\Gamma$, is a tree decomposition of $\Gamma$.
\end{example}

\begin{thm}\label{thm:abstDec}
    Let~$(W,S)$ be a Coxeter group, and let $(\euT,\caV)$ be a tree decomposition of the presentation diagram $\Gamma_\infty(W,S)$. Consider the tree of special subgroups $(\caG,\vec{\euT})$ of $(W,S)$ given by setting $\caG_t=W_{V_t}$ for every $t\in V(\vec{\euT})$, and $\caG_{\{s,t\}}=W_{V_s\cap V_t}$ for every $\{s,t\}\in E(\vec{\euT})$. Then the inclusion maps $\caG_t\hookrightarrow W$, $t\in V(\vec{\euT})$, induce a group isomorphism
    $$W\simeq \pi_1(\caG,\vec{\euT}).$$
\end{thm}
\begin{proof}
    For every $t\in V(\vec{\euT})$, denote by $\langle V_t\mid \caR_t\rangle$ the Coxeter presentation of $(W_{V_t},V_t)$ (i.e., $\caR_t=\{(st)^{m_{st}}\mid s,t\in V_t\text{ and }m_{st}<\infty\}$). Then $\pi_1(\caG,\vec{\euT})$ admits the following presentation:
    \begin{equation}\label{eq:prespi}
    \Bigg\langle \bigsqcup_{t\in V(\vec{\euT})} V_t\,\Bigg|\,
     \bigsqcup_{t\in V(\vec{\euT})}\caR_{t};\,\,v_s=v_t,\,\forall\,v\in V_s\cap V_t,\,\forall\,\{s,t\}\in E(\vec{\euT})\Bigg\rangle,
\end{equation}
where $v_s$ and $v_t$ denote the copies of the vertex $v\in V_s\cap V_t$ in the sets $V_s$ and $V_t$, respectively.
Let $\langle S\mid \caR\rangle$ be the Coxeter presentation of~$(W,S)$.
Since $S=\bigcup_{t\in V(\vec{\euT})}V_t$, one has $S=\bigcup_{t\in V(\vec{\euT})}V_t$ and $\caR=\bigcup_{t\in V(\vec{\euT})}\caR_t$. It remains to prove that the two disjoint unions appearing in~\eqref{eq:prespi} can be relaxed to unions (i.e., $\pi_1(\caG,\vec{\euT})$ admits the presentation $\langle S\mid \caR\rangle$). To this end, for every $\{s,t\}\in E(\vec{\euT})$, denote by~$\frp_{s,t}$ be the unique path without backtrackings in~$\euT$ connecting $s$ to $t$. For every $e\in E(\vec{\euT})$, denote by $\iota_e$ the inclusion map $\caG_e\hookrightarrow \caG_{t(e)}$. Because of~(T3), for all edges $e=\{u,v\}$ and $f=\{w,x\}$ occurring in $\frp_{s,t}$, the images $\iota_e(V_u\cap V_v)$ and $\iota_f(V_w\cap V_x)$ are pointwise identified. 
\end{proof}

For a Coxeter group $(W,S)$ one says that $\Gamma_\infty(W,S)$ has the \emph{$\caP$-clique property} if, for every
clique $(S^\prime,\caP_2(S^\prime))$ of $\Gamma_\infty(W,S)$, the Coxeter group
$(W_{S'},S^\prime)$
has the property $\caP$. Combining Example~\ref{ex:treedec2} and Theorem~\ref{thm:abstDec}, one deduces the following.
\begin{cor}\label{cor:chord}
Let $(W,S)$ be an infinite Coxeter group such that $\Gamma_\infty(W,S)$ has the $\caP$-clique property.
If $\Gamma_\infty(W,S)$ is chordal, then $W$ admits a completely-$\caP$  $\infty$-decomposition $\pi_1(\caG,\caT)$ (i.e., $W\simeq\pi_1(\caG,\caT)$ where $(\caG,\caT)$ is a tree of special subgroups such that all vertex-groups and edge-groups satisfy property $\caP$). 

In particular, $\caT$ can be chosen to be a maximal subtree of $\overrightarrow{\mx}(\Gamma_\infty(W,S))$ satisfying the clique intersection property.
\end{cor}

Being finite can be considered as a property $\caP$. In this case, one can use Corollary~\ref{cor:chord} to give an alternative proof of the implication (iv)$\Rightarrow$(iii) in Proposition~\ref{prop:kps}. 
Another example of a suitable property $\caP$ is the following: a group $G$ is said to be {\em slender} (or {\em Noetherian}) if every subgroup of $G$ is finitely generated. 
Slender Coxeter groups are characterised in~\cite[Proposition~C]{varg}.
By Theorem~\ref{cor:chord}, one deduces the following result due to O.~Varghese (cf.~\cite[Corollary~3.5]{varg}).
\begin{cor} Let $(W,S)$ be a Coxeter group. If $\Gamma_\infty(W,S)$ is chordal and has the slender-clique property, then $(W,S)$ is coherent (i.e., every finitely generated subgroup is finitely presented).
\end{cor}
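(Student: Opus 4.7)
The plan is to derive the corollary directly from Theorem~\ref{thm:chord} by taking $\caP$ to be the property of being slender. If $W$ is finite the statement is immediate, so I may assume $W$ is infinite. Since $\Gamma_\infty(W,S)$ is chordal and has the slender-clique property by hypothesis, Theorem~\ref{thm:chord} provides an isomorphism $W\simeq\pi_1(\caG,\caT)$, where $(\caG,\caT)$ is a finite tree of special subgroups of $(W,S)$ all of whose vertex- and edge-groups are slender.

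Next, I would observe that every vertex- and edge-group of $(\caG,\caT)$ is coherent. A Coxeter group satisfies the Tits alternative: every subgroup is either virtually abelian or contains a non-abelian free subgroup. Since a free group of rank two is not slender (its commutator subgroup is infinitely generated), any slender Coxeter group must be virtually abelian, hence virtually polycyclic, finitely presented, and therefore coherent.

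Finally, I would invoke a combination theorem for coherence: the fundamental group of a finite tree of coherent groups with slender edge groups is again coherent. The argument proceeds by induction on the number of edges of $\caT$, reducing to the case of an amalgamated product $A\amalg_C B$ with $A,B$ coherent and $C$ slender. A finitely generated subgroup $H$ of such an amalgam acts on the associated Bass--Serre tree with slender edge stabilisers and finitely generated vertex stabilisers embedded in conjugates of $A$ or $B$; the latter are therefore finitely presented, and a standard Bass--Serre argument assembles a finite presentation of $H$. Applied to $W\simeq\pi_1(\caG,\caT)$, this yields coherence of $W$.

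The main obstacle lies in the combination theorem: verifying cleanly that amalgamation of coherent groups along slender (in our case virtually polycyclic) edge groups preserves coherence. Once this step is cited from the literature on graphs of virtually polycyclic groups, the rest of the proof reduces to a direct specialisation of Theorem~\ref{thm:chord} together with the Tits alternative for Coxeter groups.
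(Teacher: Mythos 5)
Your proposal is correct and follows essentially the same route as the paper: apply Theorem~\ref{thm:chord} with $\caP=$ slender, note that slender Coxeter groups are coherent, and conclude via the combination theorem that amalgamation of coherent groups over slender subgroups preserves coherence (the paper cites \cite[Theorem~8]{ks70} for exactly this step, and \cite[Corollary~3.5]{varg} for coherence of slender Coxeter groups, where you instead sketch the Tits-alternative argument). Your instinct that the combination theorem is the step requiring an external citation rather than the loose Bass--Serre sketch is right, since vertex stabilisers of a finitely generated subgroup acting on the Bass--Serre tree need not be finitely generated a priori; that is precisely what Karrass--Solitar's theorem handles.
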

\begin{proof}
By Theorem~\ref{cor:chord}, $(W,S)$ has a completely-slender $\infty$-decomposition. By~\cite[Corollary~3.5]{varg}, every slender Coxeter group is coherent. The claim follows by the fact that the free product of coherent groups amalgamated over a slender subgroup is coherent (cf.~\cite[Theorem~8]{ks70}).
\end{proof}
\smallskip

Another application of Theorem~\ref{thm:abstDec} involves the rational cohomological dimension of a Coxeter group, as stated in the following proposition.
\begin{prop}\label{prop:cdQtreedec}
   With the notation of Theorem~\ref{thm:abstDec}, 
   let 
   \begin{equation*}
       M:=\max\{\ccd_\Q(W_{V_t})\mid t\in V(\euT)\}\,\,\text{and}\,\, N:=\max\{\ccd_\Q(W_{V_t}\cap W_{V_{t'}})\mid \{t,t'\}\in E(\euT)\}.
   \end{equation*}
   Then
    \begin{equation*}
        M\leq \ccd_\Q(W)\leq \max\{M,N+1\}.
    \end{equation*}
\end{prop}
\begin{proof}
   It is a straightforward consequence of the Mayer--Vietoris sequence associated to the decomposition of~$W$ as the fundamental group of a graph of groups as in Theorem~\ref{thm:abstDec}.
\end{proof}

Since finite groups have vanishing rational cohomological dimension (cf.~\cite[Proposition~3.7(a)]{cw:qrat}), combining Example~\ref{ex:treedec1} and Proposition~\ref{prop:cdQtreedec} one deduces the following.
\begin{cor}\label{cor:cdQ2conn}
    Let $(W,S)$ be a Coxeter group with connected presentation diagram $\Gamma$. Let $\{\Gamma_i=(V_i,E_i)\}_{i\in I}$ be the collection of all maximal $2$-connected components of $\Gamma$. Provided $M=\max\{\ccd_\Q(W_{V_i})\mid i\in I\}$, then
    \begin{equation*}
        M\leq \ccd_\Q(W)\leq \max\{M,1\}.
    \end{equation*}
\end{cor}

\section{Rational discrete cohomological dimension one}
In this section the t.d.l.c.~group $G$ still act Weyl-transitively and cocompactly on a locally finite building. We prove that $\ccd_\Q(G)\leq 1$ implies that $G$ acts with compact open stabilisers on a tree (cf.~Theorem~\ref{thm:cd1}). 
\subsection{Haglund--Paulin trees}\label{s:hp}
Let $\Delta=(\caC, \delta)$ be a building of type $(W,S)$. For $T\subseteq S$ and $\caM\subseteq\caC$, set
\begin{equation}\label{eq:sigmaM}
	\caR_T(\caM):=\{\Res_T(c) \mid c\in \caM\},
\end{equation}
(cf.~Section~\ref{sss:build}). If necessary, one writes $\Res_T^\Delta(c)$ and $\caR_T^\Delta(\caM)$ in place of $\Res_T(c)$ and $\caR_T(\caM)$, respectively.
\begin{defi}\label{def:indgraph}
Let $(W,S)$ be a Coxeter group.
\begin{enumerate}
\item[(a)] A {\em graph of special subgroups $(\caG,\Lambda)$ of $(W,S)$ based on a connected graph $\Lambda$} is a graph of groups assigning to each $\lambda\in \euV(\Lambda)\sqcup \euE(\Lambda)$  the special subgroup $\caG_\lambda=W_{S_\lambda}$ of $(W,S)$, and for every $e\in \euE(\Lambda)$ the monomorphism $\caG_{e}\hookrightarrow \caG_{t(e)}$ is the inclusion map. In case that the group homomorphism $\tau\colon\pi_1(\caG,\Lambda)\to W$ induced by the inclusion maps $W_{S_\lambda}\hookrightarrow W$, $\lambda\in \euV(\Lambda)\sqcup \euE(\Lambda)$, is an isomorphism, the pair $(\caG,\Lambda)$ is said to be a \emph{visual graph of groups decomposition} of $(W,S)$ (cf.~\cite{mt:visualdec}).
 \item[(b)] Let $\Delta=(\caC, \delta)$ be a building of type $(W,S)$ and $(\caG, \Lambda)$ be a graph of special subgroups of $(W,S)$ as in~(a). For every non-empty subset $\caM\subseteq \caC$  define a graph $\Gamma_{(\caG, \Lambda)}^\Delta(\caM)=(\euV(\caM), \euE(\caM))$ in the sense of J-P.~Serre as follows:
   \begin{equation*}
       \euV(\caM):=\bigsqcup_{v\in \euV(\Lambda)}\caR_{S_v}(\caM) \quad \text{and}\quad \euE(\caM):=\bigsqcup_{e\in \euE(\Lambda)}\caR_{S_e}(\caM).
   \end{equation*}
   For every $e\in \euE(\Lambda)$, although $S_e=S_{\bar{e}}$, here $\caR_{S_e}(\caM)$ and $\caR_{S_{\bar{e}}}(\caM)$ are regarded as disjoint copies of the same set.
   The edge-reversing, the origin and the terminus maps are defined, respectively, as 
	\begin{equation}\label{eq:ot}
 \overline{\Res_{S_e}(c)}:=\Res_{S_{\bar{e}}}(c),\quad
		o(\Res_{S_e}(c)):=\Res_{S_{o(e)}}(c),\quad 
            t(\Res_{S_e}(c)):=\Res_{S_{t(e)}}(c),
	\end{equation}
   for all $e\in \euE(\Lambda)$ and $c\in \caM$.
   \end{enumerate}
\end{defi}
\begin{rem}\label{rem:visdec}
    Since $W$ is generated by involutions, if $(\caG,\Lambda)$ is a visual graph of groups decomposition, then $\Lambda$ must be a tree.
\end{rem}
\begin{rem}
\begin{itemize}
    \item[(i)] The assignments in~\eqref{eq:ot} are well-defined. For the inversion map it is obvious. Moreover, for $e\in \euE(\Lambda)$ and $v\in \{o(e),t(e)\}$, observe that  $\Res_{S_e}(c)=\Res_{S_e}(d)$ implies $\delta(c,d)\in W_{S_e}\subseteq W_{S_v}$ and then, by~\eqref{eq:res=}, $\Res_{S_v}(c)=\Res_{S_v}(d)$.
    \item[(ii)] If $\Lambda$ is a combinatorial graph, so is $\Gamma=\Gamma_{(\caG, \Lambda)}^\Delta(\caM)$. In other terms, if the map $e\in \euE(\Lambda)\longmapsto (o(e), t(e))\in \euV(\Lambda)\times \euV(\Lambda)$ is injective, so is the map $\Res_{S_e}(c)\in \euE(\Gamma)\longmapsto (\Res_{S_{o(e)}}(c), \Res_{S_{t(e)}}(c))\in \euV(\Gamma)\times \euV(\Gamma)$. Indeed, $\Res_{S_{o(e)}}(c)\cap \Res_{S_{t(e)}}(c)=\Res_{S_e}(c)$ for all $e\in\euE(\Lambda)$ and $c\in \caM$. 
\end{itemize}
\end{rem}

\begin{example}\label{ex:indtree}
Given a Coxeter group $(W,S)$, let $S_\btd, S_\btu\subseteq S$ be such that $S=S_\btd\cup S_\btu$, and put $S_\bullet=S_\btd\cap S_\btu$. Let $\Lambda$ be a $1$-segment (in the sense of J-P.~Serre) with edge-pair $\{e,\bar{e}\}$ and define a graph of groups $(\caG, \Lambda)$ by putting $\caG_{o(e)}=W_{S_\btd}$, $\caG_{t(e)}=W_{S_{\btu}}$ and $\caG_e=\caG_{\bar{e}}=W_{S_\bullet}$. Let $\Sigma=\Sigma(W,S)=(W, \delta_W)$ be the abstract Coxeter complex of type $(W,S)$ (cf.~Section~\ref{sss:build}). Recall that, for every $\times\in \{\btd, \btu, \bullet\}$, the residue $\Res_{S_\times}^{\Sigma}(w)$ can be identified with the left-coset $wW_{S_\times}$.
\begin{itemize}
    \item[(i)] The graph $\Gamma_{(\caG, \Lambda)}^{\Sigma}(W)$ is isomorphic to the graph $X=X(W; W_{S_\btd}, W_{S_\btu})$ defined in~\cite[\S I.4, proof of Theorem~7]{ser:trees}. By~\cite[\S I.4, Theorems~6~and~7]{ser:trees}, $\Gamma_{(\caG, \Lambda)}^{\Sigma}(W)$ is a tree if, and only if, $W\simeq W_{S_\btd}\amalg_{W_{S_\bullet}}W_{S_\btu}$.
    \item[(ii)] More generally, let $\Delta=(\caC,\delta)$ be a building of type $(W,S)$. Then $\Gamma_{(\caG,\Lambda)}^\Delta(\caC)$ coincides with the graph introduced by F.~Haglund and F.~Paulin in~\cite[\S 4.2]{hp:arb}. By~\cite[p.~146, Remarques]{hp:arb} and~\cite[Lemme~4.3]{hp:arb} (or Corollary~\ref{cor:hp} below), $\Gamma_{(\caG,\Lambda)}^\Delta(\caC)$ is a tree if, and only if, $W\simeq W_{S_\btd}\amalg_{W_{S_\bullet}}W_{S_\btu}$. 
From now on, the graph $\Gamma_{(\caG,\Lambda)}^\Delta(\caC)$ constructed here will be denoted by $\Gamma_{\trt}^\Delta(\caC)$.
\end{itemize}
\end{example}

\begin{lem}\label{lem:indgraph}
    Let $\Delta=(\caC, \delta)$ be a building of type $(W,S)$, let $\caM\subseteq \caC$ be non-empty and $(\caG, \Lambda)$ be a graph of special subgroups of $(W,S)$. Denote by $\Sigma(W,S)$ the standard Coxeter complex of type $(W,S)$.
    \begin{itemize}
        \item[(i)] Every $W$-isometry $\alpha\colon W\longrightarrow \caC$ induces a graph isomorphism
        \begin{equation*}
            \varphi_\alpha: \Gamma_{(\caG, \Lambda)}^{\Sigma(W,S)}(W) \longrightarrow \Gamma_{(\caG,\Lambda)}^\Delta(\alpha(W))
        \end{equation*}
        mapping $wW_{S_x}$ to $\Res_{S_x}^{\Delta}(\alpha(w))$, for all $x\in \euV(\Lambda)\sqcup \euE(\Lambda)$ and $w\in W$.
        \item[(ii)] For $c\in \caC$, let $\caA_c$ be the set of all apartments in an atlas of $\Delta$ (cf.~Fact~\ref{fact:Wbuild}) containing $c$ as a chamber. Then,
        \begin{equation*}
\Gamma_{(\caG,\Lambda)}^\Delta(\caC)=\bigcup_{\Sigma\in\caA_c}\Gamma_{(\caG,\Lambda)}^\Delta(\caC(\Sigma)).
        \end{equation*}
    \end{itemize}
\end{lem}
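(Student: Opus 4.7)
For part (i), the plan is to verify directly that the assignment $\varphi_\alpha(wW_{S_x}) := \Res^{\Delta}_{S_x}(\alpha(w))$ respects all the data. The first step is well-definedness and injectivity on both $\euV$ and $\euE$: since $\alpha$ is a $W$-isometry, for $w_1,w_2\in W$ one has $\delta(\alpha(w_1),\alpha(w_2))=\delta_W(w_1,w_2)=w_1^{-1}w_2$, so by the equivalence~\eqref{eq:res=} applied in $\Delta$,
\[
\Res^{\Delta}_{S_x}(\alpha(w_1))=\Res^{\Delta}_{S_x}(\alpha(w_2))\ \Longleftrightarrow\ w_1^{-1}w_2\in W_{S_x}\ \Longleftrightarrow\ w_1W_{S_x}=w_2W_{S_x},
\]
for each $x\in\euV(\Lambda)\sqcup\euE(\Lambda)$. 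Surjectivity onto $\Gamma^{\Delta}_{(\caG,\Lambda)}(\alpha(W))$ is automatic: every vertex/edge of that graph is by definition a residue centred at some chamber in $\alpha(W)=\{\alpha(w)\mid w\in W\}$. Compatibility with the origin, terminus and edge-inversion maps then follows by unwinding the definitions~\eqref{eq:ot} together with the analogous formulas for $\Sigma(W,S)$ recalled in Example~\ref{ex:indtree}(i); this is a direct check.

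For part (ii), the inclusion $\supseteq$ is immediate since each $\caC(\Sigma)\subseteq\caC$ yields a subgraph. For the reverse inclusion, fix a vertex or edge of $\Gamma^{\Delta}_{(\caG,\Lambda)}(\caC)$, say a residue $\Res^{\Delta}_{S_x}(d)$ with $d\in\caC$. By Fact~\ref{fact:Wbuild}(ii), applied to the pair of chambers $c$ and $d$, there exists an apartment $\Sigma\in\mathfrak{A}$ such that $c,d\in\caC(\Sigma)$; equivalently, $\Sigma\in\caA_c$ and $d\in\caC(\Sigma)$. Hence $\Res^{\Delta}_{S_x}(d)$ occurs as an element of $\caR_{S_x}(\caC(\Sigma))$, i.e.\ lies in $\Gamma^{\Delta}_{(\caG,\Lambda)}(\caC(\Sigma))$, and the same argument handles the incidence data~\eqref{eq:ot}.

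The two assertions are essentially bookkeeping, so I do not expect a substantial obstacle. The mildly delicate point is part~(i), where one must be careful not to confuse the residue $\Res^\Delta_{S_e}(\alpha(w))$ with $\Res^\Delta_{S_{\bar e}}(\alpha(w))$: the definition in~\eqref{eq:ot} treats $\caR_{S_e}$ and $\caR_{S_{\bar e}}$ as disjoint copies indexed by the oriented edge, and $\varphi_\alpha$ must be defined to respect this labelling, which is what makes it compatible with the edge-inversion map. Once this is kept in mind, both statements reduce to a direct application of the definitions together with~\eqref{eq:res=} and Fact~\ref{fact:Wbuild}(ii).
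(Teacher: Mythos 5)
Your plan is correct and follows essentially the same route as the paper: part (i) is proved by checking that $wW_{S_x}\mapsto \Res^{\Delta}_{S_x}(\alpha(w))$ is a well-defined bijection via the $W$-isometry property and the equivalence~\eqref{eq:res=}, with surjectivity and compatibility with the incidence maps being immediate from the definitions, and part (ii) follows directly from $\caC=\bigcup_{\Sigma\in\caA_c}\caC(\Sigma)$, i.e.\ Fact~\ref{fact:Wbuild}(ii). Your remark about keeping the disjoint copies $\caR_{S_e}$ and $\caR_{S_{\bar e}}$ separate is a sensible point of care that the paper leaves implicit.
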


\begin{proof}
Part~(ii) is straightforward from $\caC=\bigcup_{\Sigma\in \caA_c}\caC(\Sigma)$ (cf.~Fact~\ref{fact:Wbuild}(ii)). Thus it suffices to prove~(i).
For every $x\in \euV(\Lambda)\sqcup \euE(\Lambda)$, the assignment
        \begin{equation*}\label{eq:phia}
            \varphi_{\alpha, x}: wW_{S_x} \longmapsto \Res_{S_x}^\Delta(\alpha(w)),\quad \forall\,w\in W
        \end{equation*}
        yields a well-defined bijection from $\caR_{S_x}^{\Sigma(W,S)}(W)$ to $\caR_{S_x}^\Delta(\alpha(W))$. Indeed, $\varphi_{\alpha, x}$ is well-defined and injective by the following: 
        \begin{equation*}
            \begin{array}{ccl}
                w_1W_{S_x}=w_2W_{S_x} & \Longleftrightarrow & \delta_W(w_1, w_2)=\delta(\alpha(w_1), \alpha(w_2))\in W_{S_x}\\
                                        & \Longleftrightarrow & \Res_{S_x}^\Delta(\alpha(w_1))=\Res_{S_x}^\Delta(\alpha(w_2)),
            \end{array}
        \end{equation*}
         for all $w_1,w_2\in W$ (cf.~\eqref{eq:res=}). 
        Finally, the surjectivity of $\varphi_{\alpha,x}$ is part of the definition.
\end{proof}

\begin{lem}\label{lem:galpath}
    Let $\Delta=(\caC, \delta)$ be a building of type $(W,S)$, $\caM\subseteq \caC$ be non-empty, and $(\caG, \Lambda)$ be a graph of special subgroups of $(W,S)$.
     Assume that $S=\bigcup_{v\in \euV(\Lambda)}S_v$.  Let $\gamma=(c_0, c_1, \ldots, c_n)$ be a gallery in $\Delta$ with $c_i\in \caM$, for every $0\leq i\leq n$. Then, for all $v,w\in \euV(\Lambda)$, $\gamma$
     induces a path $\frp_{\gamma, v\rightarrow w}$ in $\Gamma_{(\caG, \Lambda)}^\Delta(\caM)$ from $\Res_{S_v}(c_0)$ to $\Res_{S_w}(c_n)$.
\end{lem}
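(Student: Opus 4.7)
The plan is to argue by induction on the length $n$ of the gallery, using two elementary building blocks: (i) within a single chamber, the graph $\Lambda$ embeds into $\Gamma_{(\caG,\Lambda)}^\Delta(\caM)$ via the assignment $v\mapsto\Res_{S_v}(c)$, $e\mapsto\Res_{S_e}(c)$, which lets us travel between residues ``based at the same chamber''; and (ii) moving along a single step of a gallery corresponds to standing still in $\Lambda$ at any vertex whose special subset contains the label of the step.

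More precisely, observe first that, by the standing assumption in Definition~\ref{def:indgraph}(a), $\Lambda$ is connected. Thus for any chamber $c\in\caM$ and any $v,w\in\euV(\Lambda)$, fixing a combinatorial path $v=v_0,e_0,v_1,\ldots,e_{k-1},v_k=w$ in $\Lambda$, the sequence
\[
\Res_{S_{v_0}}(c),\ \Res_{S_{e_0}}(c),\ \Res_{S_{v_1}}(c),\ \Res_{S_{e_1}}(c),\ \ldots,\ \Res_{S_{v_k}}(c)
\]
is a well-defined path in $\Gamma_{(\caG,\Lambda)}^\Delta(\caM)$ from $\Res_{S_v}(c)$ to $\Res_{S_w}(c)$, thanks to the definition of the origin and terminus maps in~\eqref{eq:ot}. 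This handles the base case $n=0$ of the induction.

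For the inductive step, assume the statement holds for every gallery contained in $\caM$ of length less than $n$, and let $\gamma=(c_0,\ldots,c_n)$ be a gallery in $\Delta$ with $c_i\in\caM$ for all $i$. Write $\delta(c_{n-1},c_n)=s\in S$. By the hypothesis $S=\bigcup_{v\in\euV(\Lambda)}S_v$, there exists some $v^\ast\in\euV(\Lambda)$ with $s\in S_{v^\ast}$; consequently $\delta(c_{n-1},c_n)\in W_{S_{v^\ast}}$, and~\eqref{eq:res=} yields
\[
\Res_{S_{v^\ast}}(c_{n-1})=\Res_{S_{v^\ast}}(c_n).
\]
Applying the inductive hypothesis to the truncated gallery $(c_0,\ldots,c_{n-1})$ with endpoints $v$ and $v^\ast$ gives a path $\frp_{\gamma',v\to v^\ast}$ from $\Res_{S_v}(c_0)$ to $\Res_{S_{v^\ast}}(c_{n-1})=\Res_{S_{v^\ast}}(c_n)$. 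Concatenating this with the ``same-chamber'' path at $c_n$ from $\Res_{S_{v^\ast}}(c_n)$ to $\Res_{S_w}(c_n)$ furnished by the base case yields the required path $\frp_{\gamma,v\to w}$ in $\Gamma_{(\caG,\Lambda)}^\Delta(\caM)$.

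There is no serious obstacle here beyond identifying the two building blocks above; the spanning hypothesis $S=\bigcup_v S_v$ is used precisely to guarantee that, at each step of the gallery, some vertex $v^\ast$ of $\Lambda$ absorbs the generator $s$ so that the corresponding residues at $c_{n-1}$ and $c_n$ coincide, and the connectedness of $\Lambda$ (automatic from Definition~\ref{def:indgraph}(a)) is what allows us to link different ``floors'' $v$ and $w$ above a fixed chamber.
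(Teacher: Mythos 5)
Your proof is correct and follows essentially the same route as the paper's: both arguments reduce to a single gallery step, use the hypothesis $S=\bigcup_{v}S_v$ to find a vertex $v^\ast$ of $\Lambda$ with $\Res_{S_{v^\ast}}(c_{n-1})=\Res_{S_{v^\ast}}(c_n)$ via~\eqref{eq:res=}, and use the connectedness of $\Lambda$ to route between residues based at a fixed chamber. The only cosmetic difference is that you organise the splice as an explicit induction with a ``same-chamber'' base case, while the paper directly chooses a path in $\Lambda$ from $v$ to $w$ through $v^\ast$ and switches from residues at $c_{n-1}$ to residues at $c_n$ at that vertex.
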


\begin{proof}
   It suffices to show the statement for $\euE(\Lambda)\neq\emptyset$ and for a gallery $\gamma=(c,d)$ of length $2$. That is, given $c,d\in \caM$ with $\delta(c,d)=s\in S$ and $v,w\in \euV(\Lambda)$, there is a path in $\Gamma_{(\caG,\Lambda)}^\Delta(\caM)$ from $\Res_{S_v}(c)$ to $\Res_{S_w}(d)$. By hypothesis, there is $z\in \euV(\Lambda)$ such that $s\in S_z$, and $\Res_{S_z}(c)=\Res_{S_z}(d)$ (cf.~\eqref{eq:res=}).
   If $v=w=z$, the path $\frp_{\gamma,v\to w}$ is the trivial path.
   In all the remaining cases, let $(e_1, \ldots, e_n)$ be a path in $\Lambda$ from $v$ to $w$ running through $z$, i.e., $o(e_m)=z$ for some $1\leq m\leq n$.
   If $m=1$, then $v=z$ and $\Res_{S_v}(c)=\Res_{S_z}(c)=\Res_{S_z}(d)$ is the origin of $\Res_{S_{e_1}}(d)$ in $\Gamma_{(\caG, \Lambda)}^\Delta(\caM)$ (cf.~\eqref{eq:ot}).
   Thus the sequence 
   \begin{equation*}
       (\Res_{S_{e_1}}(d), \ldots,\Res_{S_{e_n}}(d))
   \end{equation*}
   gives a path in $\Gamma_{(\caG, \Lambda)}^\Delta(\caM)$ from $\Res_{S_v}(c)$ to $\Res_{S_w}(d)$.
   Similarly, if $m\geq 2$ one has $t(e_{m-1})=z=o(e_{m})$ and then
   \begin{equation*}\label{eq:conn}
       \Res_{S_{t(e_{m-1})}}(c)=\Res_{S_z}(c)=\Res_{S_z}(d)=\Res_{S_{o(e_{m})}}(d).
   \end{equation*}
   Hence, the sequence
   \begin{equation*}\label{eq:indpath}
       (\Res_{S_{e_1}}(c), \ldots,\Res_{S_{e_{m-1}}}(c),\Res_{S_{e_{m}}}(d), \ldots, \Res_{S_{e_n}}(d))
   \end{equation*}
   defines a path in $\Gamma_{(\caG, \Lambda)}^\Delta(\caM)$ from $\Res_{S_v}(c)$ to $\Res_{S_w}(d)$.
\end{proof}

Let $\Delta=(\caC, \delta)$ be a building of type $(W,S)$.
A non-empty subset $\caM\subseteq \caC$ is \emph{gallery connected} if, for all $c,d\in \caM$, there exists a gallery $(c_0=c, c_1, \dots, c_k=d)$ in $\Delta$ with $c_i\in \caM$ for all $0\leq i\leq k$ (cf.~\cite[Definition~5.43]{ab:build}). For example, the set $\caC$ is gallery connected (cf.~\cite[Example~5.44(a)]{ab:build}). 

Lemma~\ref{lem:galpath} implies the following.
\begin{cor}\label{cor:indconn}
    Let $\Delta=(\caC, \delta)$ be a building of type $(W,S)$ and $(\caG, \Lambda)$ be a graph of special subgroups of $(W,S)$.
    Assume that $S=\bigcup_{v\in \euV(\Lambda)}S_v$. 
    Then, for every gallery connected subset $\caM\subseteq \caC$, the graph $\Gamma_{(\caG, \Lambda)}^\Delta(\caM)$ is connected. In particular, $\Gamma_{(\caG, \Lambda)}^\Delta(\caC)$ is connected. 
    
    Moreover, let $\{\Sigma_i=(\caC(\Sigma_i),\delta\vert_{\Sigma_i})\}_{i\in I}$ be a collection of apartments of $\Delta$ with $\bigcap_{i\in I}\caC(\Sigma_i)\neq \emptyset$. Then,
    \begin{equation*}\label{eq:int}
            \Gamma_{(\caG,\Lambda)}^\Delta\Bigg (\bigcap_{i\in I}\caC(\Sigma_i)\Bigg )=\bigcap_{i\in I}\Gamma_{(\caG, \Lambda)}^\Delta(\caC(\Sigma_i)),
        \end{equation*}
\begin{equation*}\label{eq:uni}
            \Gamma_{(\caG,\Lambda)}^\Delta\Bigg (\bigcup_{i\in I}\caC(\Sigma_i)\Bigg )=\bigcup_{i\in I}\Gamma_{(\caG, \Lambda)}^\Delta(\caC(\Sigma_i)),
        \end{equation*}
       and $\Gamma_{(\caG,\Lambda)}^\Delta\Big(\bigcap_{i\in I}\caC(\Sigma_i)\Big)$ and $\Gamma_{(\caG,\Lambda)}^\Delta\Big(\bigcup_{i\in I}\caC(\Sigma_i)\Big)$ are both connected.      
\end{cor}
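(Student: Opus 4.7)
My plan is to establish the statement in three stages, following the order of the three assertions.

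For the first claim, when $\caM$ is gallery connected, I would apply Lemma~\ref{lem:galpath} directly: any two vertices of $\Gamma_{(\caG,\Lambda)}^\Delta(\caM)$ have the form $\Res_{S_v}(c)$ and $\Res_{S_w}(d)$ with $c,d\in\caM$, and a gallery from $c$ to $d$ inside $\caM$ (provided by the hypothesis) gives rise via the lemma to a path in $\Gamma_{(\caG,\Lambda)}^\Delta(\caM)$ between them. Since $\caC$ is itself gallery connected, this specialises to the connectedness of $\Gamma_{(\caG,\Lambda)}^\Delta(\caC)$.

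Next I would prove the union identity together with connectedness of the union graph. The identity is purely set-theoretic: the formation of residues commutes with set unions, so $\caR_T(\bigcup_i\caC(\Sigma_i))=\bigcup_i\caR_T(\caC(\Sigma_i))$ for every $T\subseteq S$; applying this with $T=S_v$ for $v\in\euV(\Lambda)$ and with $T=S_e$ for $e\in\euE(\Lambda)$ identifies the two graphs via Definition~\ref{def:indgraph}(b). For connectedness, I would fix $c_0\in\bigcap_i\caC(\Sigma_i)$ and note that any chamber of $\bigcup_i\caC(\Sigma_i)$ lies in some $\caC(\Sigma_j)$ and can therefore be joined to $c_0$ by a gallery inside $\caC(\Sigma_j)\subseteq\bigcup_i\caC(\Sigma_i)$, making the union gallery connected. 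The first part of the corollary then applies.

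Finally, for the intersection identity, the inclusion $\caR_T(\bigcap_i\caC(\Sigma_i))\subseteq\bigcap_i\caR_T(\caC(\Sigma_i))$ is trivial, whereas the reverse inclusion is the main obstacle. Given $R\in\bigcap_i\caR_T(\caC(\Sigma_i))$ with $T\in\{S_v,S_e\}$, I would fix $c_0\in\bigcap_i\caC(\Sigma_i)$ and consider the projection $\pi:=\mathrm{proj}_R(c_0)\in R$ onto the residue $R$. The key building-theoretic input is the convexity of apartments: for any $d_i\in R\cap\caC(\Sigma_i)$, a minimal gallery from $c_0$ to $d_i$ must lie entirely in $\Sigma_i$ and factor through $\pi$ by the defining property of projections onto residues, forcing $\pi\in\caC(\Sigma_i)$ for every $i$. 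Hence $\pi\in R\cap\bigcap_i\caC(\Sigma_i)$ and $R=\Res_T(\pi)$ as required. The same convexity argument shows that $\bigcap_i\caC(\Sigma_i)$ is itself gallery connected---any minimal gallery between two chambers in it lies in each $\Sigma_i$---so the first part of the corollary yields the connectedness of $\Gamma_{(\caG,\Lambda)}^\Delta(\bigcap_i\caC(\Sigma_i))$ and completes the proof.
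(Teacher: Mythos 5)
Your proof is correct, and for the two connectedness assertions it follows exactly the paper's route: Lemma~\ref{lem:galpath} gives the first claim, and the second is reduced to the first by observing that $\bigcup_{i}\caC(\Sigma_i)$ and $\bigcap_{i}\caC(\Sigma_i)$ are gallery connected (the paper simply cites \cite[Example~5.44(c)]{ab:build} for this, which is the convexity argument you spell out). Where you genuinely diverge is in treating the two set-theoretic identities as requiring proof. The union identity is, as you say, immediate since residue formation commutes with unions. The intersection identity is the one nontrivial point: the inclusion $\bigcap_i\caR_T(\caC(\Sigma_i))\subseteq\caR_T\bigl(\bigcap_i\caC(\Sigma_i)\bigr)$ needs an argument, and your use of the gate property --- projecting a common chamber $c_0$ onto the residue $R$ and using convexity of apartments to conclude $\mathrm{proj}_R(c_0)\in\caC(\Sigma_i)$ for every $i$ --- is a clean and correct way to get it (note that projections onto $J$-residues exist for arbitrary $J$, not just spherical ones, so there is no hidden restriction). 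The paper's proof does not address this inclusion explicitly, so your version is more complete; the only cosmetic remark is that not every minimal gallery from $c_0$ to $d_i$ passes through the gate, but some minimal gallery does, and convexity applies to all of them, which is all you need.
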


\begin{proof}
    For the first part of the statement, apply Lemma~\ref{lem:galpath}. The second part of the statement follows from the first part and the fact that, since $\bigcap_{i\in I}\caC(\Sigma_i)\neq\emptyset$, the sets $\bigcap_{i\in I}\caC(\Sigma_i)$ and $\bigcup_{i\in I}\caC(\Sigma_i)$ are gallery connected (cf.~\cite[Example~5.44(c)]{ab:build}).
\end{proof}

\begin{prop}\label{prop:indtree}
    Let $\Delta=(\caC, \delta)$ be a building of type $(W,S)$ and $(\caG, \Lambda)$ be a graph of special subgroups of $(W,S)$. Assume that $S=\bigcup_{v\in \euV(\Lambda)}S_v$. 
    Then $\Gamma_{(\caG, \Lambda)}^\Delta(\caC)$ is a tree if, and only if, $\Gamma_{(\caG, \Lambda)}^{\Sigma(W,S)}(W)$ is a tree. 
\end{prop}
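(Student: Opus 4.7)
The plan is to use Corollary~\ref{cor:indconn}, which guarantees that both $\Gamma_{(\caG,\Lambda)}^{\Sigma(W,S)}(W)$ and $\Gamma_{(\caG,\Lambda)}^\Delta(\caC)$ are connected under the hypothesis $S=\bigcup_{v\in\euV(\Lambda)}S_v$. Hence the equivalence reduces to showing that the absence of cycles transfers in both directions.

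For the implication ``$\Gamma_{(\caG,\Lambda)}^\Delta(\caC)$ is a tree $\Rightarrow$ $\Gamma_{(\caG,\Lambda)}^{\Sigma(W,S)}(W)$ is a tree'', I would fix any apartment $\Sigma$ of $\Delta$ with $W$-isometry $\alpha\colon W\to \caC(\Sigma)$ (cf.~Fact~\ref{fact:Wbuild}). Lemma~\ref{lem:indgraph}(i) supplies a graph isomorphism $\varphi_\alpha\colon \Gamma_{(\caG,\Lambda)}^{\Sigma(W,S)}(W)\to \Gamma_{(\caG,\Lambda)}^\Delta(\alpha(W))$, and the right-hand side is visibly a subgraph of $\Gamma_{(\caG,\Lambda)}^\Delta(\caC)$ by Definition~\ref{def:indgraph}(b). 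Since the chamber set $\alpha(W)$ of the apartment is gallery-connected, Corollary~\ref{cor:indconn} ensures that $\Gamma_{(\caG,\Lambda)}^\Delta(\alpha(W))$ is connected. A connected subgraph of a tree is itself a tree, so $\Gamma_{(\caG,\Lambda)}^{\Sigma(W,S)}(W)$ is a tree.

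For the converse, I would argue by contradiction. Suppose $\Gamma_{(\caG,\Lambda)}^\Delta(\caC)$ contains a cycle $(R_0,\ldots,R_n=R_0)$ of minimum length $n\geq 1$, with chamber representatives $c_0,\ldots,c_{n-1}\in\caC$ such that the edge from $R_i$ to $R_{i+1}$ reads as $\Res_{S_{e_i}}^\Delta(c_i)$ for some $e_i\in\euE(\Lambda)$ with $o(e_i)=v_i$ and $t(e_i)=v_{i+1}$. Pick an apartment $\Sigma$ containing $c_0$ and the associated retraction $\rho\colon\Delta\to\Sigma$ centred at $c_0$. Using that retractions preserve the type of galleries, together with $\delta(c_{i-1},c_i)\in W_{S_{v_i}}$, one verifies that $R_i':=\Res_{S_{v_i}}^{\Sigma}(\rho(c_{i-1}))$ defines a closed walk in $\Gamma_{(\caG,\Lambda)}^{\Sigma(W,S)}(W)$; since the latter is assumed to be a tree, this walk must backtrack at some index.

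The hard part is to lift a backtracking in the retracted walk to a backtracking in the original cycle, contradicting minimality. A direct argument fails because retractions generally collapse $\delta$-distances between chambers not lying in a common apartment with the centre. To circumvent this, I would proceed by induction on $|\euE(\Lambda)|$: the base case $|\euE(\Lambda)|=1$ is exactly Haglund--Paulin's result (cf.~Example~\ref{ex:indtree}(ii) and \cite[Lemme~4.3]{hp:arb}), while the inductive step removes a leaf edge $e\in\euE(\Lambda)$ with leaf vertex $v$, and combines the induction hypothesis applied to the reduced graph of special subgroups on $\Lambda\setminus\{v\}$ with a further application of the one-edge case along $e$. An alternative route would cover $\Gamma_{(\caG,\Lambda)}^\Delta(\caC)$ by the subgraphs $\Gamma_{(\caG,\Lambda)}^\Delta(\caC(\Sigma_i))$ coming from apartments $\Sigma_i$ in an atlas sharing a common chamber; each such subgraph is a tree by Lemma~\ref{lem:indgraph}(i), and Corollary~\ref{cor:indconn} on pairwise intersections would allow one to glue the trees. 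Since any hypothetical cycle is finite, only finitely many apartments are needed, reducing the problem to a finite inductive gluing of trees along connected subgraphs all passing through a common vertex.
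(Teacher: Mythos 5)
Your forward implication is exactly the paper's: restrict to an apartment via Lemma~\ref{lem:indgraph}(i), use Corollary~\ref{cor:indconn} for connectedness, and invoke that a connected subgraph of a tree is a tree. For the converse, the branch of your argument that works is the ``alternative route'' you mention last, and it is precisely the paper's proof: write $\Gamma^\Delta_{(\caG,\Lambda)}(\caC)=\bigcup_{\Sigma\in\caA_c}\Gamma^\Delta_{(\caG,\Lambda)}(\caC(\Sigma))$ over the apartments of an atlas through a fixed chamber $c$ (Lemma~\ref{lem:indgraph}(ii)), note that each piece is a tree by Lemma~\ref{lem:indgraph}(i) together with Corollary~\ref{cor:indconn}, observe that any reduced closed path lies in finitely many pieces, and glue finitely many subtrees by induction on the number of apartments, using Corollary~\ref{cor:indconn} to see that $\Gamma_\Sigma\cap\bigcup_{\Sigma'}\Gamma_{\Sigma'}$ is a non-empty connected union of the connected intersections $\Gamma_\Sigma\cap\Gamma_{\Sigma'}$, all of which contain the vertices $\Res_{S_v}(c)$. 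By contrast, your primary line for the converse (retraction followed by induction on $|\euE(\Lambda)|$) should be dropped: besides the lifting problem you already identify, removing a leaf vertex $v$ can destroy the hypothesis $S=\bigcup_{v'}S_{v'}$ whenever $S_v$ contains generators appearing in no other vertex group, so the inductive hypothesis would apply to a different Coxeter group and a different building, and it is unclear how the one-edge case along $e$ recombines with it. Once you commit to the apartment-gluing route and supply the gluing lemma (the union of two trees along a non-empty connected intersection is a tree), the argument is complete and coincides with the paper's.
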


\begin{proof}
    Let first $\Gamma_{(\caG, \Lambda)}^\Delta(\caC)$ be a tree and $\alpha: W\longrightarrow \caC$ be a $W$-isometry. Hence, $\alpha(W)$ defines an apartment of $\Delta$. By Corollary~\ref{cor:indconn} one has that $\Gamma_{(\caG, \Lambda)}^\Delta(\alpha(W))$ is connected and thus a tree. By Lemma~\ref{lem:indgraph}(i), also $\Gamma_{(\caG,\Lambda)}^{\Sigma(W,S)}(W)$ is a tree.

    Suppose conversely that $\Gamma_{(\caG,\Lambda)}^{\Sigma(W,S)}(W)$ is a tree. For $c\in \caC$, let $\caA_c$ be the collection of all the apartments in some atlas of $\Delta$ having $c$ as a chamber. By Lemma~\ref{lem:indgraph}, for every $\Sigma\in \caA_c$ the graph $\Gamma_\Sigma:=\Gamma_{(\caG,\Lambda)}^\Delta(\caC(\Sigma))$ is a subtree of $\Gamma:=\Gamma_{(\caG,\Lambda)}^\Delta(\caC)$, and $\Gamma=\bigcup_{\Sigma\in \caA_c}\Gamma_\Sigma$. In order to conclude that $\Gamma$ is a tree, it is sufficient to show that, for every finite non-empty subset $\caS\subseteq \caA_c$, the graph $\Gamma_\caS:=\bigcup_{\Sigma\in \caS}\Gamma_\Sigma$ is a subtree of $\Gamma$. Indeed, every closed path in $\Gamma$ without backtrackings must be contained in the union of finitely many $\Gamma_\Sigma$'s. 
    One proceeds by induction on the cardinality of $\caS$. For $|\caS|=1$ there is nothing to prove. Let now $\caS\subseteq \caA_c$ with $|\caS|\geq 2$ and suppose that the claim holds for all subsets of $\caA_c$ of cardinality $|\caS|-1$. Let $\Sigma\in \caS$ and $\caS':=\caS\setminus \{\Sigma\}$. By induction, both $\Gamma_\Sigma$ and $\Gamma_{\caS'}$ are trees. Moreover, by Corollary~\ref{cor:indconn}, 
    $$\Gamma_\Sigma\cap \Gamma_{\caS'}=\bigcup_{\Sigma'\in \caS}(\Gamma_\Sigma\cap \Gamma_{\Sigma'})$$
    is a non-empty connected subgraph of $\Gamma_\Sigma$ and thus is a tree.
    By van Kampen's theorem, $\Gamma_{\caS}=\Gamma_\Sigma\cup\Gamma_{\caS'}$ is a tree. 
\end{proof}
The following theorem generalises results of F.~Haglund and F.~Paulin (cf.~Corollary~\ref{cor:hp}) to arbitrary visual graph of groups decompositions. 
\begin{thm}\label{thm:hpgen}
    Let $\Delta=(\caC, \delta)$ be a building of type $(W,S)$ and  $(\caG, \Lambda)$ be a graph of special subgroups of $(W,S)$. Then the following conditions are equivalent:
    \begin{itemize}
        \item[(i)] $(\caG, \Lambda)$ is a visual graph of groups decomposition of $(W,S)$;
         \item[(ii)] $S=\bigcup_{v\in \euV(\Lambda)}S_v$ and $\Gamma_{(\caG,\Lambda)}^\Delta(\caC)$ is a tree.
    \end{itemize}
\end{thm}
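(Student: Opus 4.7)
The plan is to reduce the statement to the standard Coxeter complex $\Sigma(W,S)$ via Proposition~\ref{prop:indtree}, and then to identify the auxiliary graph $\Gamma:=\Gamma_{(\caG,\Lambda)}^{\Sigma(W,S)}(W)$ with the Bass--Serre coset graph of $(\caG,\Lambda)$. Unwinding Definition~\ref{def:indgraph}, the vertex set of $\Gamma$ is $\bigsqcup_{v\in\euV(\Lambda)}W/W_{S_v}$ and its edge set is $\bigsqcup_{e\in\euE(\Lambda)}W/W_{S_e}$; moreover, $W$ acts on $\Gamma$ by left multiplication with quotient isomorphic to $\Lambda$ and with stabiliser pattern precisely $(\caG,\Lambda)$. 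The Bass--Serre criterion (cf.~\cite[\S I.5.4]{ser:trees}) then says that the canonical morphism $\tau\colon\pi_1(\caG,\Lambda)\to W$ induced by the inclusions $W_{S_\lambda}\hookrightarrow W$ is an isomorphism if and only if $\tau$ is surjective and $\Gamma$ is a tree.

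For the implication (i) $\Rightarrow$ (ii), if $(\caG,\Lambda)$ is a visual decomposition then $\tau$ is an isomorphism, so $W=\langle\bigcup_v S_v\rangle$. Since any $T\subseteq S$ with $W_T=W$ must equal $S$ (otherwise $W_T$ would be a proper special subgroup of $W$), this forces $S=\bigcup_v S_v$. The Bass--Serre criterion now forces $\Gamma$ to be a tree, and Proposition~\ref{prop:indtree} transfers the tree property to $\Gamma_{(\caG,\Lambda)}^\Delta(\caC)$.

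For the implication (ii) $\Rightarrow$ (i), the hypothesis $S=\bigcup_v S_v$ guarantees that the images of the $W_{S_v}$ generate $W$, hence $\tau$ is surjective. Proposition~\ref{prop:indtree} passes the tree property of $\Gamma_{(\caG,\Lambda)}^\Delta(\caC)$ back to $\Gamma$, and the Bass--Serre criterion then upgrades surjectivity of $\tau$ to bijectivity, proving that $(\caG,\Lambda)$ is a visual graph of groups decomposition. A posteriori, $\Lambda$ itself must be a tree, in agreement with Remark~\ref{rem:visdec}, since $W$ admits no non-trivial homomorphism to $\Z$.

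The main obstacle is purely bookkeeping: one must verify carefully that the auxiliary graph $\Gamma$ of Definition~\ref{def:indgraph}(b) coincides with the Bass--Serre coset graph attached to the system of inclusions $W_{S_\lambda}\hookrightarrow W$, including the matching of the edge-reversing involution and the origin/terminus maps, and one must apply the Bass--Serre criterion in the formulation that allows the base graph $\Lambda$ to be an arbitrary connected graph rather than a tree (so that both amalgamated and HNN-type contributions are taken into account). Once that identification is in place, both implications follow at once from Proposition~\ref{prop:indtree} combined with the criterion.
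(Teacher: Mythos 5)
Your proposal is correct and follows essentially the same route as the paper: reduce to the standard Coxeter complex $\Sigma(W,S)$ via Proposition~\ref{prop:indtree}, identify $\Gamma_{(\caG,\Lambda)}^{\Sigma(W,S)}(W)$ with the universal Bass--Serre coset graph of $(\caG,\Lambda)$ (using $\caR_T^{\Sigma(W,S)}(W)=W/W_T$), and invoke the main theorem of Bass--Serre theory. Your additional observations (that $W_T=W$ forces $T=S$, and that surjectivity of $\tau$ corresponds to connectedness of the coset graph) are just slightly more explicit versions of steps the paper leaves implicit.
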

\begin{proof} Note that if $(\caG, \Lambda)$ is a visual graph of groups decomposition of $(W,S)$, then $W$ is generated by the union of the vertex groups and therefore $S=\bigcup_{v\in \euV(\Lambda)}S_v$. The statement is a consequence of Proposition~\ref{prop:indtree} and the main theorem of Bass--Serre theory (cf.~\cite[\S I.5.4, Theorem~13]{ser:trees}), because~(i) is equivalent to the fact that $\Gamma_{(\caG,\Lambda)}^{\Sigma(W,S)}(W)$ is a tree. Indeed, as $\caR_T^{\Sigma(W,S)}(W)=W/W_T$ for $T\subseteq S$, the graph $\Gamma_{(\caG,\Lambda)}^{\Sigma(W,S)}(W)$ is the universal Bass--Serre graph of $(\caG,\Lambda)$ (cf.~\cite[p.~51]{ser:trees}). 
\end{proof}
\begin{cor}[\protect{\cite[Lemme~4.3, Remarques at p.~146]{hp:arb}}]\label{cor:hp}
    Let $\Delta=(\caC,\delta)$ be a building of type $(W,S)$ and $S_\btd,S_\btu\subseteq S$ such that $S=S_\btd\cup S_\btu$. Put $S_\bullet=S_\btd\cap S_\btu$ and let $\Gamma_{\trt}^\Delta(\caC)$ be as in Example~\ref{ex:indtree}(ii). Then the following conditions are equivalent:
    \begin{itemize}
        \item[(i)] $S=S_\btd\cup S_\btu$ is an $\infty$-decomposition of $(W,S)$;
        \item[(ii)] $W\simeq W_{S_\btd}\amalg_{W_{S_\bullet}}W_{S_\btu}$;
        \item[(iii)] $\Gamma_{\trt}^\Delta(\caC)$ is a tree.
    \end{itemize}
\end{cor}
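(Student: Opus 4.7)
The plan is to obtain Corollary~\ref{cor:hp} as a direct specialization of Theorem~\ref{thm:hpgen} to the $1$-segment graph of special subgroups $(\caG,\Lambda)$ from Example~\ref{ex:indtree}(ii). Taking $\Lambda$ to be the $1$-segment with edge pair $\{e,\bar e\}$ and $\caG_{o(e)}=W_{S_\btd}$, $\caG_{t(e)}=W_{S_\btu}$, $\caG_e=\caG_{\bar e}=W_{S_\bullet}$, the definitions give $\Gamma_{\trt}^\Delta(\caC)=\Gamma_{(\caG,\Lambda)}^\Delta(\caC)$, while $\pi_1(\caG,\Lambda)$ is tautologically the amalgamated free product $W_{S_\btd}\amalg_{W_{S_\bullet}}W_{S_\btu}$. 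Consequently, asserting that $(\caG,\Lambda)$ is a visual graph of groups decomposition of $(W,S)$ is literally the same as asserting condition~(ii).

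With this identification, the equivalence (ii)$\Leftrightarrow$(iii) is immediate from Theorem~\ref{thm:hpgen}: the covering hypothesis $S=\bigcup_{v\in\euV(\Lambda)}S_v$ reduces here to the standing assumption $S=S_\btd\cup S_\btu$, and the theorem then converts the tree property of $\Gamma_{(\caG,\Lambda)}^\Delta(\caC)$ into the statement that $(\caG,\Lambda)$ is visual, i.e.\ into~(ii).

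It remains to prove (i)$\Leftrightarrow$(ii) by comparing Coxeter-type presentations. For (i)$\Rightarrow$(ii), set $P:=W_{S_\btd}\amalg_{W_{S_\bullet}}W_{S_\btu}$ and observe that $P$ is presented on the generating set $S_\btd\cup S_\btu=S$ by precisely the Coxeter relations $(st)^{m_{st}}=1$ with either $\{s,t\}\subseteq S_\btd$ or $\{s,t\}\subseteq S_\btu$. The $\infty$-decomposition hypothesis forces $m_{st}=\infty$ for any remaining pair (those with $s\in S_\triangledown$, $t\in S_\vartriangle$), so no further Coxeter relation is needed and $P$ carries the full Coxeter presentation of $W$; the canonical map $P\to W$ is therefore an isomorphism. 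Conversely, if $P\simeq W$ but $m_{st}<\infty$ for some $s\in S_\triangledown$, $t\in S_\vartriangle$, then using the standard fact that a fundamental generator of a Coxeter system lies in a special subgroup only when it belongs to its generating set, we have $s\in W_{S_\btd}\setminus W_{S_\bullet}$ and $t\in W_{S_\btu}\setminus W_{S_\bullet}$; hence $(st)^{m_{st}}$ is a nontrivial reduced alternating word in the amalgamated product normal form, contradicting the relation $(st)^{m_{st}}=1$ in $P\simeq W$.

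The only non-routine point, which I regard as the main (but mild) obstacle, is the invocation of the amalgamated-product normal form in the backward direction of (i)$\Leftrightarrow$(ii); everything else is a purely formal specialization of Theorem~\ref{thm:hpgen} together with bookkeeping on Coxeter presentations.
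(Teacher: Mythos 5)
Your proposal is correct and follows essentially the same route as the paper: the equivalence (ii)$\Leftrightarrow$(iii) is obtained by specializing Theorem~\ref{thm:hpgen} to the $1$-segment graph of groups of Example~\ref{ex:indtree}(ii), and (i)$\Leftrightarrow$(ii) is handled separately. The only difference is that the paper simply cites (i)$\Leftrightarrow$(ii) as the known fact recorded in Section~\ref{s:intro}, whereas you supply the short presentation/normal-form argument for it; that argument is sound (the backward direction correctly uses $W_J\cap S=J$ to place $s,t$ outside the amalgamated subgroup).
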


\begin{proof}
   The statement follows by combining Theorem~\ref{thm:hpgen} and the fact that $S=S_\btd\cup S_\btu$ is an $\infty$-decomposition of $(W,S)$ if, and only if, $W\simeq W_{S_\btd}\amalg_{W_{S_\bullet}}W_{S_\btu}$ (cf.~Section~\ref{s:intro}).
\end{proof}
Let $\Delta=(\caG,\delta)$ be a building of type $(W,S)$ and $(\caG,\Lambda)$ be a graph of special subgroups of $(W,S)$. Then every group $G$ acting on $\Delta$ has an induced action on $\Gamma_{(\caG,\Lambda)}^\Delta(\caM)$, for every (non-empty) $G$-invariant subset $\caM\subseteq \caC$. 
\begin{cor}\label{cor:G decomp}
    Let $G$ be a $\sigma$-compact t.d.l.c.~group acting transitively on a building $\Delta=(\caC,\delta)$ of type $(W,S)$, and let $c\in \caC$. Suppose that $(W,S)$ admits a visual graph of groups decomposition $(\caG,\Lambda)$. Then $G$ is topologically isomorphic to the fundamental group of the tree of t.d.l.c.~groups $(\widetilde{\caG},\Lambda)$, where $\widetilde{\caG}_\lambda$ is the setwise stabiliser of $\Res_{S_\lambda}(c)$ for every $\lambda\in \euV(\Lambda)\sqcup \euE(\Lambda)$, and the continuous open monomorphisms $\widetilde\caG_e\hookrightarrow \widetilde\caG_{t(e)}$, $e\in\euE(\Lambda)$, are given by inclusion. Moreover, the universal Bass--Serre tree of $\pi_1(\widetilde\caG,\Lambda)$ is isomorphic to $\Gamma_{(\widetilde{\caG},\Lambda)}^\Delta(\caC)$.
\end{cor}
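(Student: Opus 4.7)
The plan is to view $G$ as acting on the generalised Haglund--Paulin graph $\euT := \Gamma_{(\widetilde{\caG},\Lambda)}^{\Delta}(\caC) = \Gamma_{(\caG,\Lambda)}^{\Delta}(\caC)$ (these coincide, because the construction in Definition~\ref{def:indgraph}(b) depends only on the subsets $S_\lambda\subseteq S$, not on the groups attached to $\lambda$) and then apply the dictionary of Bass--Serre theory. Since $(\caG,\Lambda)$ is a visual graph of groups decomposition of $(W,S)$, Theorem~\ref{thm:hpgen} already guarantees that $\euT$ is a tree and that $S=\bigcup_{v\in\euV(\Lambda)}S_v$.

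First I would check that $G$ acts on $\euT$ in the expected way: the formula $g\cdot\Res_{S_\lambda}(c):=\Res_{S_\lambda}(g\cdot c)$ is well-defined (the $G$-action on $\Delta$ commutes with the formation of $J$-residues for every $J\subseteq S$) and preserves the type function $\euV(\euT)\sqcup\euE(\euT)\to\euV(\Lambda)\sqcup\euE(\Lambda)$, as well as the maps $o,t$ and edge-reversal from~\eqref{eq:ot}. Type-preservation already yields that the action is without inversion, because $\caR_{S_e}(\caC)$ and $\caR_{S_{\bar e}}(\caC)$ are disjoint copies by construction. Chamber-transitivity of $G$ on $\caC$ then upgrades to transitivity on each fibre of the type function (every type-$\lambda$ vertex/edge is of the form $\Res_{S_\lambda}(d)$ for some $d\in\caC$, and $d=g\cdot c$ for some $g\in G$), so the type map induces an isomorphism $G\backslash\euT\simeq\Lambda$.

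Next I would identify the vertex/edge stabilisers. By definition, $\mathrm{stab}_G(\Res_{S_\lambda}(c))=\widetilde{\caG}_\lambda$. Since $S_e\subseteq S_{t(e)}$ implies $\Res_{S_e}(c)\subseteq\Res_{S_{t(e)}}(c)$, one automatically gets the inclusion $\widetilde{\caG}_e\subseteq\widetilde{\caG}_{t(e)}$, and this inclusion is a continuous open monomorphism of t.d.l.c.~groups because every $\widetilde{\caG}_\lambda$ is open in $G$ (it contains the chamber stabiliser $\mathrm{stab}_G(c)$, which is open by hypothesis on the building action). Applying the fundamental theorem of (abstract) Bass--Serre theory \cite[\S I.5.4, Theorem~13]{ser:trees} to the $G$-action on $\euT$ then yields an abstract group isomorphism $G\simeq \pi_1(\widetilde{\caG},\Lambda)$ under which $\euT$ becomes the universal Bass--Serre tree of $(\widetilde{\caG},\Lambda)$.

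The remaining, and only delicate, step is to promote this abstract isomorphism to a topological one. The canonical map $\pi_1(\widetilde{\caG},\Lambda)\to G$ is a continuous surjection (its restriction to each vertex group $\widetilde{\caG}_v$ is the identity, hence continuous) and an abstract bijection. Both source and target are $\sigma$-compact t.d.l.c.~groups---hence Polish---so the open mapping theorem for Polish groups forces it to be a homeomorphism. This is where the $\sigma$-compactness hypothesis is actually used; I would cite the t.d.l.c.~version of Bass--Serre theory developed in~\cite{cast:cd1} to match the topology on $\pi_1(\widetilde{\caG},\Lambda)$ with the given one on $G$. The final clause---that the universal Bass--Serre tree of $\pi_1(\widetilde{\caG},\Lambda)$ coincides with $\Gamma^{\Delta}_{(\widetilde{\caG},\Lambda)}(\caC)$---is then immediate, since the tree on which $G\simeq\pi_1(\widetilde{\caG},\Lambda)$ was made to act is exactly $\euT$. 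The main obstacle is pinning down this topological identification cleanly; all the combinatorial work has already been done in Theorem~\ref{thm:hpgen}.
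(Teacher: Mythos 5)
Your proposal is correct and follows essentially the same route as the paper: let $G$ act on the tree $\Gamma_{(\caG,\Lambda)}^\Delta(\caC)$ provided by Theorem~\ref{thm:hpgen}, identify the vertex/edge stabilisers with the $\widetilde{\caG}_\lambda$, invoke the main theorem of Bass--Serre theory for the abstract isomorphism together with the identification of the universal tree, and upgrade to a topological isomorphism via the open mapping theorem using $\sigma$-compactness. The only cosmetic difference is in the last step: the paper first observes that $\varphi\colon\pi_1(\widetilde{\caG},\Lambda)\to G$ is open (open subgroups of the vertex groups form a neighbourhood basis and map to open subgroups of $G$) and then applies the open mapping theorem for $\sigma$-compact locally compact groups to $\varphi^{-1}$; you should cite that version rather than the one for Polish groups, since t.d.l.c.~groups need not be metrizable.
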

\begin{proof}
    By Theorem~\ref{thm:hpgen}, $\Gamma=\Gamma_{(\caG,\Lambda)}^\Delta(\caC)$ is a tree. Since $G$ acts chamber-transitively on $\Delta$, one has that $g\cdot \Res_T(c)=\Res_T(g\cdot c)$ for all $g\in G$ and $T\subseteq S$. In particular, for every $\lambda\in \euV(\Lambda)\sqcup \euE(\Lambda)$, the group $G$ acts transitively and with open stabilisers on $\caR_{S_\lambda}(\caC)$. 
    Therefore, $G$ acts on $\Gamma$ with open stabilisers and $G\backslash \Gamma=\Lambda$.
    Let $(\widetilde{\caG},\Lambda)$ be the graph of t.d.l.c.~groups defined in the statement. By the main theorem of Bass--Serre theory, the homomorphism $\varphi\colon \pi_1(\widetilde{\caG},\Lambda)\to G$ induced by inclusion maps $\widetilde{\caG}_\lambda\hookrightarrow G$, $\lambda\in \euV(\Lambda)\sqcup \euE(\Lambda)$, is an isomorphism of abstract groups. Moreover, the universal Bass--Serre tree of $\pi_1(\widetilde\caG,\Lambda)$ is isomorphic to $\Gamma_{(\widetilde{\caG},\Lambda)}^\Delta(\caC)$.
     It remains to prove that $\varphi$ is open and has an open inverse. Recall that the group topology of $\pi_1(\widetilde{\caG},\Lambda)$ has a neighbourhood basis at $1$ formed by the open subgroups of the vertex-groups of $(\widetilde{\caG},\Lambda)$. In particular, $\pi_1(\widetilde{\caG},\Lambda)$ is a t.d.l.c.~group and $\varphi$ is open.
    Moreover, by the open mapping theorem (cf.~\cite[Theorem~6.19]{st:lc}), the homomorphism $\varphi^{-1}$ is open as well.
\end{proof}
\subsection{Spherical $\infty$-decompositions}\label{s:sphinf}

Given a Coxeter group $(W,S)$ and a subset $J\subsetneq S$, it is possible to investigate $\infty$-decompositions $S=S_\btd\cup S_\btu$ satisfying $J=S_\btd\cap S_\btu$ considering the
presentation diagram $\Gamma_\infty(W,S)$.
Let $\Xi_J$ be the subgraph of
$\Gamma_\infty(W,S)$ spanned by $S\setminus J$, i.e.,
\begin{equation}
\label{eq:cox2}
\begin{aligned}
\euV(\Xi_J)&=S\setminus J,\\
\euE(\Xi_J)&=\{\{u,v\}\in\caP_2(S\setminus J)\mid m_{uv}\neq\infty\},
\end{aligned}
\end{equation}
where $\caP_2(S\setminus J)$ is the set of all the subsets of $S\setminus J$ of cardinality~$2$.
Then there exists a non-trivial $\infty$-decomposition $S=S_\btd\cup S_\btu$
for $(W,S)$ satisfying $S_\btd\cap S_\btd=J$ if, and only if,
$\Xi_J$ is disconnected. Indeed,
let $L\subseteq S\setminus J$ be the set of vertices of a connected component of $\Xi_J$ and let $V=S\setminus (J\cup L)$. Then 
$S_\btd=L\cup J$ and $S_\btu=V\cup J$ define a non-trivial $\infty$-decomposition
of $(W,S)$. Consequently, one has the following.

\begin{fact}
\label{fact:cox}
Let $(W,S)$ be a Coxeter group. For every $J\subsetneq S$, the following are equivalent:
\begin{itemize}
\item[(i)] there exists a non-trivial $\infty$-decomposition 
$S=S_\btd\cup S_\btu$ of $(W,S)$ satisfying $J=S_\btd\cap S_\btu$;
\item[(ii)] $\Xi_{J}$ has at least $2$ connected components.
\end{itemize}
\end{fact}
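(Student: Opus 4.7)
The statement is essentially a combinatorial reformulation of the definition of an $\infty$-decomposition once one restricts attention to decompositions having a prescribed intersection $J$. My plan is to unpack both sides directly and check that they match via the graph $\Xi_J$.

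For the implication (i)$\Rightarrow$(ii), I would start from a non-trivial $\infty$-decomposition $S=S_\btd\cup S_\btu$ with $J=S_\btd\cap S_\btu$. Setting $S_\triangledown=S_\btd\setminus J$ and $S_\vartriangle=S_\btu\setminus J$, the hypotheses $S=S_\btd\cup S_\btu$ and $J=S_\btd\cap S_\btu$ force $S\setminus J=S_\triangledown\sqcup S_\vartriangle$, and non-triviality gives $S_\triangledown,S_\vartriangle\neq\emptyset$. The defining condition $m_{st}=\infty$ for all $s\in S_\triangledown$, $t\in S_\vartriangle$ is precisely the assertion that no edge of $\Gamma_\infty(W,S)$ (and therefore no edge of $\Xi_J$) joins a vertex of $S_\triangledown$ to a vertex of $S_\vartriangle$. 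Hence $\Xi_J$ splits as a disjoint union of the two non-empty induced subgraphs on $S_\triangledown$ and $S_\vartriangle$ with no edges between them, so it has at least two connected components.

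For the converse (ii)$\Rightarrow$(i), I would pick a connected component of $\Xi_J$ with vertex set $L\subseteq S\setminus J$ and put $V=(S\setminus J)\setminus L$. Both $L$ and $V$ are non-empty since $\Xi_J$ has at least two components. Define $S_\btd:=L\cup J$ and $S_\btu:=V\cup J$; by construction $S=S_\btd\cup S_\btu$ and $J=S_\btd\cap S_\btu$, and $S_\triangledown=L$, $S_\vartriangle=V$ are non-empty. Because $L$ is a union of components of $\Xi_J$, no edge of $\Xi_J$ connects $L$ and $V$; by definition of the edge set of $\Xi_J$ this means $m_{st}=\infty$ for every $s\in L$ and $t\in V$. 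Thus $S=S_\btd\cup S_\btu$ is a non-trivial $\infty$-decomposition of $(W,S)$ with prescribed intersection $J$.

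There is no real obstacle here: both directions are definition chases, the only thing to be careful about is matching the non-triviality condition on the decomposition (both $S_\triangledown$ and $S_\vartriangle$ non-empty) with the condition that $\Xi_J$ has at least \emph{two} components rather than being merely disconnected in a degenerate sense (e.g.\ $\Xi_J$ having isolated vertices counts, but the empty graph does not suffice — this is handled automatically by $J\subsetneq S$ together with the choice of a proper subset $L$ of $\euV(\Xi_J)$).
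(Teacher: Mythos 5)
Your proof is correct and follows essentially the same route as the paper, which constructs the decomposition $S_\btd=L\cup J$, $S_\btu=V\cup J$ from a connected component $L$ of $\Xi_J$ and its complement $V=S\setminus(J\cup L)$ in exactly the way you describe. The forward direction, which the paper leaves implicit, is the same definition chase you carry out.
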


We now state a result due to M.~W.~Davis in terms of \emph{spherical $\infty$-decompositions} of a Coxeter group $(W,S)$, i.e., $\infty$-decompositions $S=S_\btd\cup S_\btu$ of $(W,S)$ in which $S_\btd\cap S_\btu$ is a spherical subset of $(W,S)$. A similar reformulation has also been provided in \cite[Corollary~16]{mt:visualdec} but with a different approach.
Recall that an $\infty$-decomposition $S=S_\btd \cup S_\btu$ of $(W,S)$ is said to be \emph{non-trivial} if $S_{\btd}\cap S_{\btu}$ is a proper subset of $S_\btd$ and $S_\btu$.
\begin{thm}[\protect{\cite[Theorem~8.7.2]{dav:book} and \cite[Corollary~16]{mt:visualdec}}] 
\label{thm:dav}
Let $(W,S)$ be a Coxeter group.
Then the following are equivalent:
\begin{itemize}
\item[(i)] the group $W$ has more than one end;
\item[(ii)] $(W,S)$ has a non-trivial spherical $\infty$-decomposition. 
\end{itemize}
\end{thm}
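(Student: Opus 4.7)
My plan is to prove the two implications separately: the direction (ii)$\Rightarrow$(i) will be a direct consequence of Bass--Serre theory applied to the amalgamation packaged by the $\infty$-decomposition, whereas (i)$\Rightarrow$(ii) requires combining Stallings's structure theorem for groups with more than one end with combinatorial information coming from the Davis realisation of $\Sigma(W,S)$.

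For (ii)$\Rightarrow$(i), a non-trivial spherical $\infty$-decomposition $S=S_\btd\cup S_\btu$ gives, by definition, an amalgamation $W\simeq W_{S_\btd}\amalg_{W_{S_\bullet}}W_{S_\btu}$ with $W_{S_\bullet}$ finite and strictly contained in both factors (since $S_\triangledown$ and $S_\vartriangle$ are non-empty). Applying Corollary~\ref{cor:hp} to the abstract Coxeter complex $\Sigma(W,S)$, the group $W$ acts cocompactly on the tree $\Gamma_{\trt}^{\Sigma(W,S)}(W)$ with finite edge stabilisers and two orbits of vertices, and no global fixed point. Standard Bass--Serre theory (or directly Stallings's theorem) then yields $e(W)>1$.

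For (i)$\Rightarrow$(ii), I would first apply Stallings's decomposition theorem to obtain a non-trivial splitting of $W$ over a finite subgroup, either as an amalgamated product or as an HNN extension. The HNN case is excluded because $W$ is generated by involutions and therefore admits no non-trivial homomorphism to $\Z$, whereas every HNN extension surjects onto $\Z$ via the stable-letter exponent map. Thus $W=A\amalg_C B$ with $C$ finite and $A,B\supsetneq C$, and by Fact~\ref{fact:cox} it suffices to produce a spherical subset $J\subsetneq S$ such that the subgraph $\Xi_J$ of $\Gamma_\infty(W,S)$ spanned by $S\setminus J$ is disconnected.

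The key ingredient for this last step is the action of $W$ on $|\DeltaDav|$, which is a contractible $\mathrm{CAT}(0)$ complex on which $W$ acts properly and cocompactly, with every stabiliser conjugate to a spherical parabolic subgroup. By the Bruhat--Tits fixed-point theorem the finite group $C$ fixes a point and is therefore contained, after conjugation, in some spherical $W_J$. The main obstacle is to verify that $J$ can be chosen, or modified within the poset of spherical subsets, so that $\Xi_J$ disconnects. To achieve this I would compare the Bass--Serre tree of the Stallings decomposition with the candidate ``visual tree'' $\Gamma_{(\caG,\Lambda)}^{\Sigma(W,S)}(W)$ built from a one-edge graph of special subgroups with edge-group $W_J$ (Theorem~\ref{thm:hpgen}), and run an accessibility-style induction on $|S|$ that exploits the finiteness of the family of spherical parabolic subgroups to force the two trees to be $W$-equivariantly compatible. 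This final compatibility argument, which essentially amounts to rewriting the abstract factors $A$ and $B$ in terms of standard parabolic subgroups after conjugation, is the principal technical difficulty.
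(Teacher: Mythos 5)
Your (ii)$\Rightarrow$(i) argument is fine and essentially the paper's: a non-trivial spherical $\infty$-decomposition exhibits $W$ as a non-trivial amalgam of two special subgroups over a finite one, and the action on the associated Bass--Serre tree (equivalently, the easy direction of Stallings) gives $e(W)>1$.

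The direction (i)$\Rightarrow$(ii), however, has a genuine gap. You reduce to a Stallings splitting $W=A\amalg_C B$ over a finite subgroup $C$, conjugate $C$ into a spherical parabolic $W_J$ via the fixed-point theorem on the Davis complex, and then declare that the remaining task --- showing that $J$ (or some spherical subset) can be chosen so that $\Xi_J$ disconnects, by making the abstract Bass--Serre tree ``$W$-equivariantly compatible'' with a visual tree --- is ``the principal technical difficulty.'' That deferred step \emph{is} the theorem: the whole content of (i)$\Rightarrow$(ii) is precisely that an abstract splitting over a finite subgroup can be converted into a visual one, and this is the substance of Mihalik--Tschantz's visual decomposition theorem and of Davis's Theorem~8.7.2. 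Knowing that $C$ lies in some spherical $W_J$ gives no information about whether $\Xi_J$ is disconnected, and the proposed ``accessibility-style induction on $|S|$'' is not an argument but a placeholder for one. The paper avoids this entirely by a different, self-contained route: $e(W)>1$ forces $\mathrm{H}^1(W,\Q W)\neq 0$; Davis's decomposition of $\mathrm{H}^1(W,\Q W)$ as a sum over spherical $J$ of terms $\Q W^J\otimes\mathrm{H}^1(K,K^{S\setminus J};\Q)$ produces a spherical $J$ with $\mathrm{H}^1(K,K^{S\setminus J};\Q)\neq 0$; contractibility of the Davis chamber $K$ then shows $K^{S\setminus J}$ is disconnected, and since $K^{S\setminus J}=\bigcup_{s\in S\setminus J}K_s$ with $K_s\cap K_t\neq\emptyset$ exactly when $m_{st}<\infty$, the graph $\Xi_J$ is disconnected, so Fact~\ref{fact:cox} concludes. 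You would need either to reproduce an argument of that kind or to actually carry out the visualisation of the Stallings splitting; as written, the hard implication is asserted rather than proved.
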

\begin{proof} 
The implication (ii)$\Rightarrow$(i) follows by Stallings' decomposition theorem. Indeed, assume that $(W,S)$ has a non-trivial spherical $\infty$-decomposition $S=S_\btd\cup S_\btu$. Then, it is
evident from the Coxeter presentation that $W$ is a non-trivial free product
of the special subgroups corresponding to $S_\btd$ and $S_\btu$ amalgamated over the special subgroup
generated by $S_\btd\cap S_\btu$ (cf.~\cite{hp:arb,mt:visualdec}).

Conversely, suppose that the Coxeter group $(W,S)$ has more than one end. Then one must have $\mathrm{H}^1(W,\Q W)\neq0$ (cf.~\cite[Chapter~IV, Theorem~6.10]{dd:stall}). 
By~\cite[Corollary~8.5.2]{dav:book},
\begin{equation}
\label{eq:davis1}
\mathrm{H}^1(W,\Q W)\simeq 
\bigoplus_{J\subset S,\atop 
{J\text{ spherical}}} \Q W^J\otimes \mathrm{H}^1(K,K^{S\setminus J};\Q),
\end{equation}
where $K$ is the fundamental chamber of $|\Sigma(W,S)_{\mathrm{Dav}}$| (cf.~\cite[p.~ 126]{dav:book}) and 
$\Q W^J$ is the $\Q$-vector space spanned by the set $W^J$ 
(cf.~\cite[Definition~4.7.4]{dav:book}).
In particular, there exists $J\subseteq S$ spherical such that
$\mathrm{H}^1(K,K^{S\setminus J};\Q)\neq0$.
By K\"unneth's theorem, 
$\mathrm{H}_1(K,K^{S\setminus J};\Q)\neq0$.
As $K$ is contractible (cf.~\cite[Lemma~ 7.2.5(i)]{dav:book}), one has a short exact sequence
\begin{equation}
\label{eq:davis2}
0\longrightarrow \mathrm{H}_1(K,K^{S\setminus J};\Q)\longrightarrow
 \mathrm{H}_0(K^{S\setminus J},\Q)\longrightarrow \Q\longrightarrow 0.
\end{equation}
Thus, $\mathrm{H}_1(K,K^{S\setminus J};\Q)\neq0$ implies that $K^{S\setminus J}$
has more than one path-connected component.
By definition,
$K^{S\setminus J}$ is the union of the connected spaces $K_s$, $s\in S\setminus J$, where each $K_s$ is the geometric realisation of the poset $\{T\subseteq S \mid T\text{ spherical, }s\in T\}$
 (cf.~\cite[\S 7.2]{dav:book}). Moreover, $K_s\cap K_t\neq\emptyset$ if, and only if, $m_{st}<\infty$. Hence, since  $K^{S\setminus J}$
has at least two connected components, the number of connected components of $\Xi_J$ is at least two. Then Fact~\ref{fact:cox} yields the claim. 
\end{proof}
Since $W$ is finitely generated, one has $e(W)\in\{0,1,2,\infty\}$. Moreover, $e(W)=0$ if and only if $W$ is finite. 
The following is a direct consequence of Theorem~\ref{thm:dav}.
\begin{cor}\label{cor:cends}
   Let $(W,S)$ be a Coxeter group with Coxeter matrix $[m_{st}]_{s,t\in S}$. 
  \begin{itemize}
     \item[(i)] Let $(W,S)$ be right-angled (i.e., $m_{st}\in\{2,\infty\}$ for all $s\neq t$). Then $e(W)\geq 2$ if, and only if, there exists a non-trivial $\infty$-decomposition  $S=S_\btd\cup S_\btu$ such that $m_{st}\leq 2$ for all $s,t\in S_\btd\cap S_\btu$.
     \item[(ii)] If $m_{st}<\infty$ for all $s,t\in S$, then $e(W)\leq 1$. In particular,
     \begin{itemize}
     \item[(iia)] if $(W,S)$ is a Coxeter group of hyperbolic type with $|S|\geq 4$ (cf.~\cite[\S 6.9]{hum:ref}), then $e(W)=1$;
     \item[(iib)] if $(W,S)$ is affine (cf.~\cite[\S 6.5]{hum:ref}) then $e(W)\in\{1,2\}$. Moreover, $e(W)=2$ if and only if $W\simeq D_\infty=C_2\amalg C_2$. 
	\end{itemize}
 \end{itemize}
\end{cor}

By a repeated application of Theorem~\ref{thm:dav}, M.~W.~Davis observed that every Coxeter group $(W,S)$ is isomorphic to the fundamental group $\pi_1(\mathcal G,\euT)$ of a finite tree of groups, in which each vertex-group is a special subgroup of $W$ with at most one end, and each edge-group is a spherical special subgroup (cf.~\cite[Proposition~8.8.2]{dav:book}). The injection of
each edge-group into its endpoint vertex-group is given by inclusion,
and the isomorphism between the fundamental group and $W$ is 
induced by the inclusion maps of the vertex-groups into $W$. In other words, every Coxeter group $(W,S)$ admits a visual graph decomposition $(\caG,\euT)$ in the sense of~\cite{mt:visualdec} with spherical edge-groups and vertex-groups with at most one end.  If $W$ is infinite and every vertex-group of $(\mathcal G,\euT)$ is a spherical special subgroup of $W$, we say that $(\mathcal G,\euT)$ is a {\em completely spherical $\infty$-decomposition of $(W,S)$.} If $(\mathcal H,\mathscr{A})$ is another completely spherical $\infty$-decomposition of $(W,S)$, then there exists a bijection $\phi$ between the vertices of $\euT$ and the vertices of $\mathscr{A}$
such that, for each vertex $v$ of $\euT$, the vertex-group $\caG_v$ is conjugate to the vertex-group $\mathcal{H}_{\phi(v )}$ (cf.~\cite[Proof of Theorem~19]{mt:visualdec}).

Combining Theorem \ref{thm:dav}, the main results of \cite{dun:acc}, and~\cite[Theorem~34]{mt:visualdec}, one obtains the following version of the Karrass--Pietrowski--Solitar theorem for Coxeter groups.
\begin{prop}\label{prop:kps}
	Let $(W,S)$ be a Coxeter group. Then the following is equivalent:
 \begin{itemize}
 \item[(i)] $W$ is infinite and virtually free;
 \item[(ii)] $\vcd(W)=1$ (or, equivalently, $\ccd_\Q(W)=1$);
  \item[(iii)] $(W,S)$ admits a completely spherical $\infty$-decomposition;
  \item[(iv)] $(W,S)$ is not spherical, but every clique $\Lambda\subseteq\Gamma_\infty(W,S)$
is spherical, and $\Gamma_\infty(W,S)$ is chordal.
   \end{itemize}
\end{prop}

\begin{thm}\label{thm:cd1}
    Let $G$ be a t.d.l.c.~group acting chamber-transitively  on a locally finite building $\Delta$ with compact stabilisers. If $\ccd_\Q(G)=1$ then $G$ decomposes as fundamental group of a finite tree of profinite groups and $G$ is unimodular.
\end{thm}
\begin{proof}
By Theorems~\ref{thm:ihbuil} and~\ref{prop:kps}, $\ccd_\Q(W)=1$ and $W\simeq\pi_1(\mathcal{G},\Lambda)$ for some finite tree of spherical special subgroups $(\mathcal{G},\Lambda)$ of $(W,S)$. By~Corollary~\ref{cor:G decomp} and since $G$ acts with compact stabilisers on $\Delta$, $G$ acts on $\Gamma_{(\mathcal{G},\Lambda)}^{\Delta}(\caC)$ with compact open vertex stabilisers and $G\backslash \Gamma_{(\mathcal{G},\Lambda)}^{\Delta}(\caC)=\Lambda$. The fact that $G$ is unimodular is due to~\cite[Propositions~1.2~and~3.6]{baku}.
\end{proof}

\bibliographystyle{plain}
\bibliography{coxbuild}
\end{document}